\newsavebox{\ssa}
\newtheorem{thm}{Theorem}[subsection]
\newtheorem{prop}[thm]{Proposition}
\newtheorem{lemma}[thm]{Lemma}
\newtheorem{cor}[thm]{Corollary}
\newtheorem{conj}[thm]{Conjecture}
\theoremstyle{definition}
\newtheorem{def0}[thm]{Definition}
\newtheoremstyle{boldremark}
    {\dimexpr\topsep/2\relax} % space above
    {\dimexpr\topsep/2\relax} % space below
    {}          % body font
    {}          % indent amount
    {\bfseries} % theorem head font
    {.}         % punctuation after theorem head
    {.5em}      % space after theorem head
    {}          % theorem hed spec. (empty = "normal")
\theoremstyle{boldremark}
\newtheorem{rem}[thm]{Remark}
\newtheorem{ex}[thm]{Example}
\newtheorem{cons}[thm]{Construction}
\newcommand{\lthm}{Łoś's theorem}
\newcommand{\Rep}{\textrm{Rep}}
\newcommand{\Repb}{\textbf{Rep}}
\newcommand{\Hom}{\textrm{Hom}}
\newcommand{\End}{\textrm{End}}
\newcommand{\Id}{\textrm{Id}}
\newcommand{\Ind}{\textrm{Ind}}
\newcommand{\Tr}{\textrm{Tr}}
\newcommand{\Img}{\textrm{Im}}
\newcommand{\IND}{\textrm{IND}}
\newcommand{\fan}{for almost all $n$}
\newcommand\prodF{{\prod}_{\mathcal F}}
\newcommand{\Fp}{\overline{\mathbb F}_p}
\newcommand{\Fpn}{\overline{\mathbb F}_{p_n}}
\newcommand{\FQ}{\overline{\mathbb Q}}
\newcommand{\Cext}{\overline{\mathbb C(\nu)}}
\newcommand{\chct}{\textrm{char}}
\newcommand{\ct}{\textrm{ct}}
\newcommand{\enalg}{\End(\Bbbk^r)}
\long\def\/*#1*/{}
\title{Deformed Double Current Algebras via Deligne Categories}
\author{Daniil Kalinov}
\date{}
\begin{document}

\maketitle

\begin{abstract}
 In this paper we give an alternative construction of a certain class of Deformed Double Current Algebras. These algebras are deformations of $U(\enalg[x,y])$ and they were initially defined and studied by N.Guay in his papers. Here we construct them as algebras of endomorphisms in Deligne category. We do this by taking an ultraproduct of spherical subalgebras of the extended Cherednik algebras of finite rank. 
\end{abstract}

\tableofcontents

\section{Introduction}

The Deformed Double Current Algebras were introduced by Nicolas Guay in the papers \cite{Gu1,Gu2,Gu3,guay2016deformed}. In these papers Guay gives several presentations of these algebras in terms of generators and relations, starting with the type A in \cite{Gu2} and then moving to any Lie algebra of rank $\ge 3$ in \cite{guay2016deformed}. This paper is concerned with an alternative construction of these algebras in type $A$, which provides us with the DDCA in the cases of $\mathfrak{gl}_1$, $\mathfrak{gl}_2$ and $\mathfrak{gl}_3$. This construction also provides us with an additional source of representations for these algebras and also, generally, gives us a new useful perspective on them. Note that another place there these algebras were studied is the paper \cite{costello2017holography} by Kevin Costello, where he constructs these algebras through the study of the algebra of ADHM quantum mechanics.

This paper is continuation of the research started in \cite{etingof2020new}. There the author of this paper together with Pavel Etingof and Eric Rains presented a way to construct the DDCA of rank 1 in type A and B as the ultraproduct of the spherical subalgebras of the corresponding Cherednik algebras, which can also be though of a spherical subalgebra of a Cherednik algebra defined in the Deligne category $\Rep(S_\nu)$. 

In the current paper this argument to the higher rank. In order to do this we use the notion of the Cherednik algebra extended to the higher rank introduced in \cite{etingof2020representations}. We transfer this notion from the finite rank setting to the setting of Deligne categories. There we define a spherical subalgebra of the Cherednik algebra $\widetilde{\mathcal D}_{t,k\nu}(r)$. This construction automatically induces a structure of a representation of $\widetilde{\mathcal D}_{t,k\nu}(r)$ on any homomorphism space $\Hom_{\Rep(S_\nu)}(\mathbb C, M)$, where $M$ is a representation of the extended Cherednik algebra in the Deligne category. The current paper thus continues the trend of working with Deligne categories using ultrafilters that was initiated in \cite{deligne2007categorie,harman2016deligne} and was carried out, for example, in \cite{kalinov2018finite,harman2020classification,etingof2020new, utiralova2020harish}.

Then we introduce a slightly different algebra $\mathcal D_{t,k}(r)$ by making the parameter $\nu$ of the previous algebra into a central element. This algebra turns out to be isomorphic to Guay's DDCA for $r \ge 4$ and $t +rk \ne 0$, which we prove in the final section of this paper.

The structure of the paper is as follows. In Section 2 we discuss the preliminary notation and give a quick overview of the notion of  ultrafilters and ultraproducts used extensively in this paper. In Section 3 we introduce the Deligne Categories and show how they can be approached through the lens of ultraproducts, and study the structure of the symmetric power of a unital algebra $S^\nu(A)$ as an object of the Deligne category $\Rep(S_\nu)$. In section 4 we give a definition of the extended Cherednik algebra in  finite rank, construct a generating set of its spherical subalgebra and then extend the definition to  complex rank. In Section 5 we construct the DDCA  $\mathcal D_{t,k}(r)$ and its basis. In Section 6 we prove that this DDCA is isomorphic to the DDCA constructed by Guay.

{\bf Acknowledgments.} 
I would like to thank Pavel Etingof and Nicolas Guay for conversations we had about the content of this paper.
The work on this paper was partially supported by the NSF grant DMS - 1916120.

\section{Preliminaries and notation}

\subsection{General notation} \label{sectnot}

In what follows we will use a lot of different categories of representations. We will always denote the usual (``finite rank") categories of representations using the boldface font, and use the regular font for the interpolation categories (e.g. $\Rep(S_\nu)$).

For example we will use the following notation for the categories of representations of symmetric groups. For convenience set ${\mathbb F}_0 = \Bbb Q$.

\begin{def0}
By $\textbf{Rep}(S_n; \Bbbk)$ denote the category of (possibly infinite dimensional) representations of the symmetric group $S_n$ over $\Bbbk$. By $\textbf{Rep}^{f}(S_n;\Bbbk)$ denote the full subcategory of finite dimensional representations. 
Also for $p\ge 0$ set  $\textbf{Rep}_p(S_n) := \textbf{Rep}(S_n; \Fp)$ and  $\textbf{Rep}^{f}_p(S_n) := \Repb^f(S_n; \Fp)$. 
\end{def0}

We will also fix the notation for working with Young diagrams and for the irreducible representations of the symmetric group.
\begin{def0}
 For a Young diagram $\lambda$, by $l(\lambda)$ denote the number of rows of the diagram (the length), by $|\lambda|$ the number of boxes (the weight) and by $\ct(\lambda)$ the content of $\lambda$, i.e., $\ct(\lambda)=\sum_{(i,j)\in \lambda}(j-i)$, where $(i,j)$ denotes the box of $\lambda$ in row $i$ and column $j$.
\end{def0}
\begin{def0}
For $p=0$ or $p>n$ and a Young diagram $\lambda$ such that $|\lambda|=n$ denote by $X_p(\lambda)$ the unique simple object of $\Repb_p(S_n)$ corresponding to $\lambda$.

For $n>0$ and $p\ge 0$ denote by $\mathfrak{h}_n^p \in \Repb_p(S_n)$, or shortly by $\mathfrak{h}_n$ (if there is no ambiguity about the characteristic) the standard permutation representation of $S_n$. 
\end{def0}

There is an important central element in $\Bbbk[S_n]$:
\begin{def0}\label{centraldef}
Denote the central element $\sum_{1 \le i<j \le n}s_{ij} \in \Bbbk[S_n]$ by $\Omega_n$. 
\end{def0}
\begin{rem}
Note that $\Omega_n$ acts on $X_p(\lambda)$ by $\ct(\lambda)$.
\end{rem}

As another piece of notation, below we will frequently use the following operation on Young diagrams:
\begin{def0} \label{defyoung}
 For a Young diagram $\lambda$ and an integer $n \ge \lambda_1+|\lambda|$ denote by $\lambda|_n$ the Young diagram $(n - |\lambda|, \lambda_1, \dots, \lambda_{l(\lambda)})$, where $\lambda_i$ is the length of the $i$-th row of $\lambda$.
\end{def0}

In what follows we will often use the language of tensor categories. Here's what we mean by a tensor category (see Definition 4.1.1 in \cite{etingof2016tensor}):
\begin{def0}
A tensor category $\mathcal C$ is a $\Bbbk$-linear locally finite abelian rigid symmetric monoidal category, such that $\End_{\mathcal C}(\mathds{1}) \simeq \Bbbk$.
\end{def0}

We will also fix a notation for the symmetric structure:
\begin{def0}
 For two objects $X,Y$ of a tensor category $\mathcal C$, we will denote by $\sigma_{X,Y}$ the map from $X \otimes Y$ to $Y \otimes X$, given by the symmetric structure. Oftentimes, when the objects we are referring to are obvious from the context, we will denote it simply by $\sigma$. Especially in the case when $X=Y$.
\end{def0}

We will also use the notion of the ind-completion of a category. For a general category ind-objects are given by diagrams in the category, with morphisms being morphisms between diagrams. However, in the case of a semisimple category there is a more concrete description. 

\begin{def0} \label{inddef}
For a semisimple category $\mathcal C$ with the set of simple objects $\{V_{\alpha}\}$ for $\alpha \in A$ the category\footnote{We use all uppercase letters to denote $\IND$, so as not to confuse it with the induction functors.}
 $\IND(\mathcal C)$  is the category $\mathcal D$ with objects $\bigoplus_{\alpha \in A} M_{\alpha}\otimes V_{\alpha}$, where $M_{\alpha}$ are (possibly infinite dimensional) vector spaces. The morphism spaces are given by:
$$
\Hom_{\mathcal D}(\bigoplus_{\alpha \in A} M_{\alpha} \otimes V_{\alpha},\bigoplus_{\beta \in A} N_{\beta} \otimes V_{\beta}) = \prod_{\alpha \in A} \Hom_{\textrm{Vect}}(M_{\alpha},N_{\alpha})  . 
$$
\end{def0}

Thus, in this case, we can think of ind-objects as infinite direct sums of objects of $\mathcal C$.

Next we would like to explain a way to define an ind-object of $\mathcal C$.
\begin{cons} \label{indcons}
Suppose $0 = X_0 \subset X_1 \subset X_2 \subset \dots \subset X_i \subset \dots$ is a nested sequence of objects of $\mathcal C$. Then their formal colimit, which we denote by $X$, is an object of $\IND(\mathcal C)$. We can write it down explicitly in terms of Definition \ref{inddef}. 

Indeed, suppose we have $X_i = \bigoplus_{\alpha \in A}M_{i,\alpha} \otimes V_{\alpha}$. Then it follows that:
$$
\bigcup_{i \in \mathbb N} X_i  = X = \bigoplus_{\alpha \in A} \left(\bigcup_{i \in \mathbb N} M_{i,\alpha}\right) \otimes V_{\alpha}  ,
$$
where $\bigcup_{i \in \mathbb N}X_i=\varinjlim X_i$ stands for the colimit along the diagram consisting of points numbered by $\mathbb N$ and arrows from $i$ to $i+1$ for all $i$.
\end{cons}

%We would like to make another remark concerning the homomorphisms in the ind-category.
\begin{rem}\label{remindmor}
Suppose that $X$ and $Y$ are two objects constructed via Construction \ref{indcons}. Then:
$$
\Hom_{\IND(\mathcal C)}(X,Y) =\varprojlim_{i \in \mathbb N} \bigcup_{j \in \mathbb N}\Hom_{\mathcal C}(X_i,Y_j) .
$$

In case when $X$ is actually an object of $\mathcal C$, this simplifies to:
$$
\Hom_{\IND(\mathcal C)}(X,Y) = \bigcup_{j \in \mathbb N}\Hom_{\mathcal C}(X,Y_j).
$$
In other words, $X$ is a compact object of $\IND(\mathcal C)$.
\end{rem}
\begin{ex}
We have  $\textbf{Rep}_p(S_n) = \IND(\textbf{Rep}^{f}_p(S_n))$. Indeed, this holds for the representation category of any finite dimensional algebra.
\end{ex}

We will also use a notion of a bifiltered algebra below. 
\begin{def0}
A bifiltered vector space $V$ is a vector space together with a collection of subspaces $F^{i,j}V$ for $i,j \in \mathbb Z_{\ge 0}$ such that $F^{i,j}V \subset F^{i+1,j}V$ and $F^{i,j}V \subset F^{i,j+1}V$, and there exists a basis of $V$ such that the intersection of this basis with $F^{i,j}V$ gives a basis of $F^{i,j}V$ (i.e. the filtrations $F^{i, \cdot}V$ and $F^{\cdot,j}V$ are compatible).

A bifiltered algebra $A$ is an algebra which is bifiltered as a vector space such that $F^{i,j}A \cdot F^{i',j'}A \subset F^{i+i',j+j'}A$.
\end{def0}

This structure also induces a few standard filtrations:

\begin{rem}
Notice that bifiltered structure on $A$ induces two filtrations on $A$ via restriction. The first one is given by $F_h^iA = F^{i,\bullet}$ and we will call it the horizontal filtration of $A$. The second one is given by $F_v^iA = F^{\bullet,i}A$ and we will call it the vertical filtration of $A$.

There is another filtration on $A$ that we will call the total filtration on $A$. It is given by $F_t^lA = \bigcup_{i+j=l}F^{i,j}A$.
\end{rem}

Also it's easy to see that to specify a bifiltration it is enough to specify the horizontal and vertical degree of each generator of $A$.
\subsection{Ultrafilters}

Below we will briefly discuss some basic facts about ultrafilters and ultraproducts. Ultrafilters provide us with a notion of the limit of algebraic structures, which works really well for describing Deligne categories. Thus, we will use this framework extensively in the present paper.

We will only give brief definitions and look at a few examples here. For a much more elaborate discussion see Section 2.5 in \cite{etingof2020new}. The reader unfamiliar with this technology is advised to read corresponding section in that paper first. For more details on  this topic in the algebraic context, see \cite{schoutens2010use}.

\subsubsection{Ultrafilters and ultraproducts}

First we will fix an ultrafilter we will be considering below.

\begin{def0}
For the rest of the paper we will denote by $\mathcal F$ a fixed non-principal ultrafilter on $\mathbb N$.
\end{def0}

Throughout the paper we will use the following shorthand phrase.
\begin{def0}
By the statement  ``$A$ holds for almost all $n$'', where $A$  is a logical statement depending on $n$, we will mean that $A$ is true for some subset of natural numbers $U$, such that $U \in \mathcal F$.
\end{def0}

Now, define the notion of an ultraproduct. 
\begin{def0}
Suppose we have a sequence of sets $E_n$ labeled by natural numbers. Consider the set $\prod'_{\mathcal F}E_n$ consisting of the sequences $\{e_n\}_{n \in A}$ for a set $A \in \mathcal F$ and $e_n \in E_n$. i.e., $\prod'_{\mathcal F}E_n$ consists of sequences of elements of $E_n$ which are defined \fan. Then $\prod_{\mathcal F}E_n$ is the quotient of $\prod'_{\mathcal F}E_n$ by the following relation: $\{e_n\}_{n \in A} \sim \{e_n'\}_{n \in A'}$ iff $e_n = e_n'$ for almost all $n$ (i.e.,  on $B \subset A' \cap A$, such that $B \in \mathcal F$).
The set $\prod_{\mathcal F}E_n$ is called the ultraproduct of the sequence $\{E_n\}_{n \in \mathbb N}$. 
\end{def0}

Oftentimes we use the following notation:
\begin{def0}
For a sequence $\{E_n\}_{n \in \mathbb N}$, denote an element $\{ e_n \}_{n \in \mathbb N} \in \prod_{\mathcal F}E_n$ by $\prod_\mathcal F e_n$.
\end{def0}

This construction is interesting for us, because it, in a certain sense, preserves a lot of algebraic structures. This can be formalized into the following theorem: 
\begin{thm}\textbf{\lthm} (Theorem 2.3.2 in \cite{schoutens2010use})

Suppose we have a collection of sequences of sets $E^{(k)}_i$ for $k = 1,\dots,m$, a collection of sequences of elements $f^{(r)}_i$ for $r = 1,\dots, l$, and a  formula of a first order language $\phi(x_1,\dots,x_l, Y_1, \dots, Y_m)$ depending on some parameters $x_i$ and sets $Y_j$. Denote by \linebreak $E^{(k)} = \prod_{\mathcal F}E^{(k)}_{n}$ and $f^{(r)} = \prod_{\mathcal F} f^{(r)}_n$.  Then 
$\phi(f^{(1)}_n, \dots, f^{(l)}_n, E^{(1)}_n, \dots, E^{(m)}_n)$ is true for almost all $n$ iff  $\phi(f^{(1)}, \dots, f^{(l)}, E^{(1)}, \dots E^{(m)})$ is true.
\end{thm}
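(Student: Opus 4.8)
This is the classical Łoś theorem (here quoted as Theorem~2.3.2 of \cite{schoutens2010use}), and the proof I would give is the standard induction on the syntactic complexity of the formula $\phi$. As a preliminary reduction, observe that each parameter $f^{(r)} = \prodF f^{(r)}_n$ is represented by a sequence defined only on some index set lying in $\mathcal F$, and there are only finitely many parameters; since $\mathcal F$ is a filter, the intersection $A_0$ of these finitely many sets again belongs to $\mathcal F$. So I may assume all the $f^{(r)}_n$ are defined for every $n \in A_0$ and, replacing $\mathbb N$ by $A_0$, pretend all the data is defined everywhere; this affects nothing.

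\textbf{Base case: terms and atomic formulas.} By an immediate induction on the structure of a term $t(x_1,\dots,x_l)$, using that all function (and constant) symbols are interpreted componentwise on $\prodF E_n$, the value of $t$ at $(f^{(1)},\dots,f^{(l)})$ equals $\prodF\, t(f^{(1)}_n,\dots,f^{(l)}_n)$. Consequently, for an atomic formula — an equality $t_1 = t_2$ of terms, or a basic relation $R(t_1,\dots,t_s)$ — the assertion that it holds in $\prodF E_n$ unwinds directly, via the definition of the equivalence relation defining the ultraproduct and via the componentwise definition of $R$ on the ultraproduct, into the assertion that it holds componentwise for all $n$ in some set of $\mathcal F$, i.e. \fan.

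\textbf{Inductive step: connectives and quantifiers.} For $\phi = \psi_1 \wedge \psi_2$: if $\psi_1$ holds \fan{} and $\psi_2$ holds \fan, then both hold on the intersection of the two index sets, which is in $\mathcal F$; and conversely a superset of a set in $\mathcal F$ is in $\mathcal F$. For $\phi = \neg\psi$: set $U = \{\, n : \psi(f^{(1)}_n,\dots,f^{(l)}_n)\text{ is true}\,\}$; by the induction hypothesis $\psi$ holds in $\prodF E_n$ exactly when $U \in \mathcal F$, hence $\neg\psi$ holds in $\prodF E_n$ exactly when $U \notin \mathcal F$, which in turn is equivalent (this is the defining maximality property of an ultrafilter) to $\mathbb N \setminus U \in \mathcal F$, i.e. to $\neg\psi$ holding \fan. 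Disjunction follows from these via $\psi_1 \vee \psi_2 \equiv \neg(\neg\psi_1 \wedge \neg\psi_2)$. For the existential quantifier $\phi = \exists y\,\psi(y,x_1,\dots,x_l)$: if $\phi$ holds in $\prodF E_n$, choose a witness $b = \prodF b_n$; by the induction hypothesis $\psi(b_n,f^{(1)}_n,\dots)$ holds \fan, and on that same index set $\exists y\,\psi(y,f^{(1)}_n,\dots)$ holds, so $\phi$ holds \fan. Conversely, if $\exists y\,\psi(y,f^{(1)}_n,\dots)$ is true for all $n$ in some $V \in \mathcal F$, use the axiom of choice to pick $b_n \in E_n$ with $\psi(b_n,f^{(1)}_n,\dots)$ true for each $n \in V$, put $b = \prodF b_n$, and conclude by the induction hypothesis that $\psi(b,f^{(1)},\dots)$ — hence $\phi$ — holds in $\prodF E_n$. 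The universal quantifier is handled by $\forall y \equiv \neg\exists y\,\neg$.

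\textbf{Main obstacle.} The only step that genuinely uses the hypothesis that $\mathcal F$ is an \emph{ultra}filter rather than merely a proper filter is the negation case: without maximality of $\mathcal F$ one recovers just one implication there, and the theorem indeed fails for general filters. Every other step uses only that $\mathcal F$ is a proper filter (closed under finite intersections and supersets), together with the axiom of choice in the converse half of the existential-quantifier step. The remaining thing to watch is the bookkeeping of domains of definition: elements of an ultraproduct are defined only \fan, so throughout one keeps replacing finite families of ``large'' index sets by their intersection — legitimate precisely because $\mathcal F$ is closed under finite intersections.
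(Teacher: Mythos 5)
Your proof is the standard induction on formula complexity and it is correct; the handling of terms/atomic formulas, the filter arithmetic for conjunction, the appeal to the maximality of $\mathcal F$ for negation, and the use of choice in the converse half of the existential step are all in order, and your remark isolating exactly where the ultrafilter property (versus a mere filter) is needed is accurate. Note, however, that the paper itself does not prove this statement: it is quoted as Theorem~2.3.2 of \cite{schoutens2010use} and used as a black box, so there is no in-paper argument to compare against. Your argument agrees with the standard one found in that reference and in model-theory texts generally, so there is nothing to reconcile.
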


Below we consider a few examples of ultraproducts that are important for our future constructions.

\begin{ex} \label{fieldextrans}
 Take the ultraproduct of a countably infinite number of copies of $\overline{\mathbb Q}$. By \lthm $\prod_{\mathcal F} \overline{\mathbb Q}$ is a field, which is algebraically closed. It has characteristic zero since $\forall k  \in \mathbb Z$ such that $k\ne 0$ it follows that $ k = \prod_{\mathcal F} k\ne 0$.  Also it is easy to see that its cardinality is continuum. Hence by Steinitz's theorem\footnote{This theorem tells us that two uncountable algebraically closed fields are isomorphic iff their characteristic and cardinality are the same. It is proven in \cite{steinitz1910algebraische}.} $\prod_{\mathcal F} \overline{\mathbb Q} \simeq \mathbb C$. Note that there is no canonical isomorphism.
 
 Consider the ultraproduct of integers $\prodF n$. Via the isomorphism constructed in the previous paragraph this is an element of $\mathbb C$. Notice that this element cannot satisfy any nontrivial polynomial equation over $\mathbb Q$ (indeed, the corresponding polynomial must have infinitely many roots), hence $\prodF n$ is a transcendental element of $\mathbb C$. By an automorphism of $\mathbb C$ we can send this element into any transcendental element of $\mathbb C$. 
 
 Thus we conclude that for any transcendental element $\nu \in \mathbb C$ there is an isomorphism $\prodF \FQ \simeq \mathbb C$, such that $\prodF n = \nu$.
 
 Also notice that by Steinitz's theorem it follows that $\overline{\mathbb C(x)}\simeq \mathbb C$, since they have the same cardinality. Thus we can also conclude that there is an isomorphism $\prodF \FQ \simeq \overline{\mathbb C(x)}$ such that $\prodF n = x$.
\end{ex}
 
 \begin{ex} \label{fieldexalg}
 Take the ultraproduct of $\overline{\mathbb F}_{p_n}$ for some sequence of distinct prime numbers $p_n$. As before, by \lthm${}$ $\prod_{\mathcal F} \overline{\mathbb F}_{p_n}$ is a field, which is algebraically closed. Also it has cardinality continuum. Now for any natural number $k$, we have $k = \prod_{\mathcal F} k \ne 0$, since it is equal to zero for at most a finite number of $n$.  Hence $\prod_{\mathcal F} \overline{\mathbb F}_{p_n} \simeq \mathbb C$ by Steinitz's theorem, again not in a canonical way.
 
 Suppose we are given an algebraic number $\nu \in \mathbb C$. We can also pick $\nu_n$ and $p_n$ such that $\nu_n < p_n$ and  $\prodF \nu_n = \nu$ inside $\prodF \Fpn \simeq \mathbb C$. For details see Example 2.5.14 in  \cite{etingof2020new}.
\end{ex}

\begin{ex}\label{catex}
 Suppose $\mathcal C_n$ is a sequence of  (locally small) categories. We can define the ultraproduct category $\widehat{\mathcal C} = \prod_{\mathcal F} \mathcal C_n$ as the category 
whose objects are sequences of objects in $\mathcal C_n$.  %\footnote{The notion of an ultraproduct extends to classes in an obvious way.}. 
For clarity we will denote the ultraproduct of objects by\footnote{The superscript $C$ stands for "category".} $\prodF^C$. The morphisms in $\widehat{\mathcal C}$ are given by
 $$
 \Hom_{\widehat{\mathcal C}}(\prodF^C X_n,\prodF^C Y_n) = \prodF \Hom_{\mathcal C_n}(X_n,Y_n), 
 $$
 and the composition maps are given by the ultraproducts of the composition maps, i.e., \linebreak $(\prod_{\mathcal F}f_n) \circ (\prod_{\mathcal F}g_n) = \prod_{\mathcal F} (f_n \circ g_n)$. By \lthm${}$ this data satisfies the axioms of a category. If the categories $\mathcal C_n$ have some structures, for example the structures of an abelian or monoidal category, then $\widehat{\mathcal C}$ also has these structures\footnote{But the finite-length property, for example, does not survive, as it cannot be formulated as a first-order logical statement.}.
 
 Usually $\widehat{\mathcal C}$ is too big and it is interesting to consider a certain full subcategory $\mathcal C$ in there, for example by only  considering the ultraproducts of  sequences of objects of $\mathcal C_i$ bounded in some sense. This will be discussed in more detail in the next subsection. 
\end{ex}

\subsubsection{Restricted ultraproducts}

When one works with a sequence of objects which are in some sense infinite dimensional, it's sometimes useful to consider a subobject in the ultraproduct consisting of the sequences of elements which are bounded in a certain way. This can be called a {\it restricted ultraproduct.} We have already mentioned this in the case of categories in Example \ref{catex}. For example, the Deligne category $\Rep(S_\nu)$ will be constructed as a full subcategory in a certain ultraproduct category. 

In this section we will outline the definitions of the restricted ultraproduct which makes sense in the case of filtered or graded vector spaces and categories. For more information and examples of this construction see section 2.5.3 in \cite{etingof2020new}.

\begin{def0}\label{restrdef}
For a sequence of vector spaces $E_n$  with an increasing filtration \linebreak $F^0E_n \subset F^1E_n \subset \dots \subset F^kE_n \subset \dots$, define the restricted ultraproduct $\prodF^r E_n$ to be equal to $\bigcup_{k=0}^\infty \prodF F^kE_n  \subset \prodF E_n$.
\end{def0}

We will use this notion in the case when the dimensions of the space $F^kE_n$ are finite and stabilize as $n\to \infty$ for fixed $k$. Let us give a few examples.

\begin{ex}
Consider a countable-dimensional vector space $V$ over $\Bbbk$. Consider a sequence of copies of $V$, i.e., $V_n = V$. Also consider an increasing filtration $F^jV$ by finite dimensional subspaces  and the same filtration on all $V_n$. We can calculate the restricted ultraproduct of this sequence:
$$
\prodF^rV_n = \bigcup_{k=0}^\infty \prodF F^kV_n = \bigcup_{k=0}^\infty F^kV = V  .
$$
Whereas the usual ultraproduct $\prodF V_n$ is more than countable-dimensional.
\end{ex}

We also would like to introduce a related construction, which we will also call a restricted ultraproduct. This will take place in the setting of ultraproducts of categories. Suppose $\{\mathcal D_i\}$ is a sequence of artinian abelian categories and $\mathcal D = \prodF \mathcal D_i$ is their ultraproduct (an abelian category which is, in general, not artinian). Suppose $\mathcal C$ is a full artinian subcategory of $\mathcal D$. Using Construction \ref{indcons} we can obtain ind-objects of $\mathcal C$ in the following way.
\begin{cons} \label{indconsult}
Suppose we have a sequence of ind-objects $X_n \in \IND(\mathcal D_n)$ such that each $X_n$ is equipped with a filtration by objects of $\mathcal D_n$. I.e., we have the following sequence of inclusions
$$
F^0X_n \subset F^1X_n \subset \dots \subset F^iX_n \subset \dots \ ,
$$
where all $F^iX_n \in \mathcal D_n$ and $X_n = \bigcup_{i \in \mathbb N}F^iX_n$. Also suppose that for each $i\ge 0$, we have $\prodF^CF^iX_n  \in \mathcal C$. Denote $\prodF^CF^iX_n$ by $F^iX_\infty$. It is clear that 
we have injections $F^iX_\infty\hookrightarrow F^{i+1}X_\infty$. 

It follows that the sequence $F^iX_\infty$ defines an object $X_\infty \in \IND(\mathcal C)$ as:
$$
X_\infty = \bigcup_{i \in \mathbb N}F^iX_\infty = \bigcup_{i \in \mathbb N}\prodF^CF^iX_n  .
$$
\end{cons}

We will use a special notation for this construction:
\begin{def0} \label{restrcatdef}
In the setting of Construction \ref{indconsult}, call $X_\infty$ the restricted ultraproduct of $X_n$ with respect to the fixed filtration. We will write
$$
X_\infty = \prodF^{C,r}X_n  .
$$
\end{def0}

\begin{rem} Note that if $\widetilde F^\bullet$ is another filtration on the sequence $\lbrace{X_n\rbrace}$ such that $\prodF^CF^iX_n  \in \mathcal C$, such that  for any $i$ there exist $r(i),s(i)$ such that 
$F^iX_n\subset \widetilde F^{r(i)}X_n$ and $\widetilde F^iX_n\subset F^{s(i)}X_n$ for almost all $n$. Then it follows that the restricted ultraproducts $\prodF^rX_n$ taken with respect to both filtrations are naturally isomorphic. See Remark 2.5.24 in \cite{etingof2020new} for more information on this.
\end{rem}

\begin{rem}
Note that we can easily define the restricted ultraproduct of a series of bifiltered algebras $A_n$, with finite-dimensional filtration components as $\bigcup_{i,j\ge 0} \prodF F^{i,j}A_n$. Note that the result is the same as the restricted product taken with respect to the total filtration of $A_n$.

The same goes for the sequence of bifiltered ind-objects of artinian categories similarly to Construction \ref{indconsult}.

Thus below we will use these two operations interchangeably. 
\end{rem}

\section{Deligne Categories}

\subsection{Constructions of the category $\Rep(S_\nu)$}

In this section we will very briefly discuss a well known construction of the interpolation category for the symmetric group due to Deligne \cite{deligne2007categorie} and its basic properties. For an extended version of this discussion see Section 3 in \cite{etingof2020new}. For more on this topic see \cite{comes2009blocks,comes2012ideals,comes2014deligne,etingof2014representation,etingof2016representation}. Anyone who encounters Deligne categories for the first time is advised to read one of the above papers first. We assume that $\Bbbk$ has characteristic $0$.

We can define the Deligne category $\Rep(S_\nu; \Bbbk)$ in the following manner:
\begin{def0}
 For $\nu \in \Bbbk$, the Deligne category $\Rep(S_\nu; \Bbbk)$ is the Karoubian envelope of the additive envelope of a certain skeletal monoidal category $\Rep^0(S_\nu; \Bbbk)$ defined using  certain combinatorial data. 
 \end{def0}
 
This definition won't be used much in the current paper, instead the reader can also think about Theorem \ref{ultdelthm} as the definition of Deligne categories.

Below we will list a few pieces of notation and results concerning Deligne categories. They are well known and can be found for example in \cite{comes2009blocks,etingof2014representation}.

\begin{def0}
The object $[1]$ is called the permutation representation and is denoted by $\mathfrak h$.
  The object $[0]$ is called the trivial representation and is denoted by $\Bbbk$ (by a slight abuse of notation).
\end{def0}

The important properties of $\Rep(S_\nu; \Bbbk)$ are listed below:
\begin{prop}
\textbf{a)} For $\nu \notin \mathbb Z_{\ge 0}$ ${\rm Rep}(S_\nu ; \Bbbk)$ is a semisimple tensor category. \\
\textbf{b)} For $\nu \notin \mathbb Z_{\ge 0}$  simple objects of ${\rm Rep}(S_\nu; \Bbbk)$ are in 1-1 correspondence with Young diagrams of arbitrary size. They are denoted by $\mathcal X(\lambda)$. Moreover $\mathcal X(\lambda)$ is a direct summand in $[|\lambda|]$. \\ 
\textbf{c)} The categorical dimension of $\mathfrak h$ is $\nu$ and of $\Bbbk$ is $1$. \\
\textbf{d)} All $\mathcal X(\lambda)$ are self-dual.
\end{prop}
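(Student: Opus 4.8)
\emph{Overview and easy parts.} All four statements are classical (Deligne \cite{deligne2007categorie}; in diagrammatic form Comes--Ostrik \cite{comes2009blocks}), and the plan is to recall the argument in the shape that fits the ultraproduct methods used later. By the combinatorial definition, $\Hom_{\Rep^0(S_\nu;\Bbbk)}([n],[m])$ has a basis of set partitions of an $(n+m)$-element set, with composition given by stacking diagrams and multiplying by $\nu$ for every deleted interior block; in particular $\End([n])$ is the partition algebra $P_n(\nu)$, and $\End([0])=\Bbbk$ since the only diagram on the empty set is empty. Thus $\Rep(S_\nu;\Bbbk)$ is by construction $\Bbbk$-linear, rigid, symmetric monoidal with $\End(\mathbbm 1)=\Bbbk$, and the content of (a) is the extra claim that it is abelian semisimple. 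Part (c) is then immediate: the categorical dimension of $\mathfrak h=[1]$ is the scalar attached to a closed loop, namely $\nu$, and $\dim\Bbbk=\dim[0]=1$ since $\End([0])=\Bbbk$.

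\emph{Parts (a) and (b).} I would invoke the classical semisimplicity criterion for partition algebras due to Martin (see also \cite{comes2009blocks,etingof2014representation}): $P_n(\nu)$ is semisimple unless $\nu$ lies in a certain finite set of non-negative integers, hence it is semisimple for all $n$ simultaneously as soon as $\nu\notin\mathbb Z_{\ge 0}$, and in that range its simple modules are labelled by partitions $\lambda$ with $|\lambda|\le n$. Since every object of $\Rep(S_\nu;\Bbbk)$ is a direct summand of some $[n]^{\oplus k}$ and all the algebras $\End([n])$ are semisimple with finite-dimensional $\Hom$-spaces between them, the additive Karoubian envelope is abelian semisimple, proving (a). For (b) one decomposes $[n]=[n-1]\otimes[1]$ inductively, using the functor $-\otimes[1]$ and its adjoint (diagrammatically, the maps adding or contracting a strand) to relate the block idempotents of $P_{n-1}(\nu)$ and $P_n(\nu)$, and checks that passing from level $n-1$ to level $n$ introduces exactly one new indecomposable summand, which one labels $\mathcal X(\lambda)$ with $|\lambda|=n$; it is a summand of $[n]$ by construction, and these exhaust the simple objects. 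This branching bookkeeping is precisely the content of \cite{comes2009blocks}, which I would cite rather than reproduce.

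\emph{Part (d) and the ultraproduct viewpoint.} For $\nu$ transcendental, all of (a), (b), (d) alternatively follow directly from \lthm: by Example \ref{fieldextrans} and Theorem \ref{ultdelthm} one realizes $\Rep(S_\nu)$ as a full subcategory of $\prodF\Repb^f(S_n)$ with $\nu=\prodF n$, and then semisimplicity, the classification of simples via $\lambda\mapsto\prodF X_0(\lambda|_n)$, and the self-duality $\mathcal X(\lambda)^*\cong\mathcal X(\lambda)$ are the images under $\prodF$ of the corresponding facts for the symmetric groups $S_n$ --- in particular, every complex irreducible representation of a symmetric group is self-dual. The remaining cases for $\nu$ algebraic follow the same way using Example \ref{fieldexalg}, and the statements over an arbitrary $\Bbbk$ follow by base change: all structure maps, and the Gram matrices of the natural bilinear forms on the $\Hom$-spaces, are defined over $\mathbb Z[\nu]$, and those Gram determinants are nonzero polynomials whose roots lie in $\mathbb Z_{\ge 0}$, so nondegeneracy --- hence semisimplicity and everything else --- survives base change whenever $\nu\notin\mathbb Z_{\ge 0}$.

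\emph{Main obstacle.} The only genuine input is the semisimplicity of $P_n(\nu)$ for $\nu\notin\mathbb Z_{\ge 0}$ together with the precise indexing of its simple modules; this is where I would lean on the literature, as the phrase ``well known'' in the statement indicates, while everything else is either formal (splitting idempotents and counting indecomposables) or a direct transfer through \lthm.
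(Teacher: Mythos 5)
The paper offers no proof of this Proposition: it is stated as well known with references to \cite{comes2009blocks,etingof2014representation}, and your proposal follows essentially the same route --- deferring the key input (semisimplicity of the partition algebras $P_n(\nu)$ for $\nu\notin\mathbb Z_{\ge 0}$ and the indexing of their simples) to the literature, with the rest formal; the ultraproduct transfer you add for (a), (b), (d) is exactly the viewpoint the paper develops immediately afterwards (Theorem \ref{ultdelthm}, Proposition \ref{simpprop}), so it is consistent rather than circular. One slip worth correcting: in your inductive bookkeeping for (b), passing from $[n-1]$ to $[n]$ does not introduce exactly one new indecomposable summand, but one for each partition $\lambda$ with $|\lambda|=n$ (so $p(n)$ of them), arising from the simple modules of the quotient $P_n(\nu)\twoheadrightarrow \Bbbk[S_n]$ by the ideal of diagrams with fewer than $n$ through-strands; as stated, your count would index the simples by $\mathbb Z_{\ge 0}$ rather than by all Young diagrams, contradicting part (b) itself.
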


The Deligne category enjoys a certain universal property:
\begin{prop}\label{unprop}
(8.3 in \cite{deligne2007categorie})
For any $\Bbbk$-linear Karoubian symmetric monoidal category $\mathcal T$, the category of $\Bbbk$-linear symmetric monoidal functors from ${\rm Rep}(S_\nu; \Bbbk)$ to $\mathcal T$ is equivalent to the category $\mathcal T^f_\nu$ of commutative Frobenius algebras in $\mathcal T$ of dimension $\nu$. The equivalence sends a functor $F$ to the object $F(\mathfrak h)$.
\end{prop}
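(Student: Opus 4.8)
The plan is to realise the equivalence through an explicit pair of mutually quasi-inverse constructions, reducing the whole argument to a coherence statement about the partition category underlying $\Rep^0(S_\nu;\Bbbk)$.

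\emph{From a functor to a Frobenius algebra.} The starting point is that $\mathfrak h=[1]$ is itself a commutative Frobenius algebra object of $\Rep(S_\nu;\Bbbk)$. Its multiplication $m\colon\mathfrak h\otimes\mathfrak h\to\mathfrak h$, unit $u\colon\Bbbk\to\mathfrak h$, comultiplication $\delta\colon\mathfrak h\to\mathfrak h\otimes\mathfrak h$ and counit $\varepsilon\colon\mathfrak h\to\Bbbk$ are the morphisms of $\Rep^0(S_\nu;\Bbbk)$ attached to the elementary set partitions ``merge two points'', ``create a point'', ``split a point'' and ``delete a point''. Directly from the combinatorial definition of $\Rep^0$ one checks, once and for all, the (co)associativity, (co)unitality, cocommutativity and Frobenius relations, together with $\varepsilon\circ u=\nu\cdot\Id_\Bbbk$ and $m\circ\delta=\Id_{\mathfrak h}$ (so the structure is special, the diagrammatic reflection of ``no closed loop''; this special commutative Frobenius structure of dimension $\nu$ is the notion encoded by $\mathcal T^f_\nu$). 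Since a $\Bbbk$-linear symmetric monoidal functor $F\colon\Rep(S_\nu;\Bbbk)\to\mathcal T$ carries $\otimes$, $\Bbbk$, the symmetry and all of $m,u,\delta,\varepsilon$ over to $\mathcal T$, the object $F(\mathfrak h)$ inherits the structure of an object of $\mathcal T^f_\nu$; and for a monoidal natural transformation $\eta\colon F\Rightarrow G$ the component $\eta_{\mathfrak h}$ is forced to be a morphism of Frobenius algebras. This defines the functor $F\mapsto F(\mathfrak h)$.

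\emph{From a Frobenius algebra to a functor.} For the quasi-inverse, given $A\in\mathcal T^f_\nu$ with structure maps $m_A,u_A,\delta_A,\varepsilon_A$, I would build a functor $G_A$ in three stages. On the skeleton: $G_A([n])=A^{\otimes n}$, and the basis morphism of $\Hom_{\Rep^0}([n],[m])$ indexed by a set partition $\pi$ of the $n+m$ underlying points goes to the arrow $A^{\otimes n}\to A^{\otimes m}$ built from copies of $m_A,u_A,\delta_A,\varepsilon_A$ and symmetry isomorphisms according to the shape of $\pi$ (a block meeting $a$ incoming and $b$ outgoing strands contributes the $(a,b)$-ary operation assembled from the Frobenius data). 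Then extend $\Bbbk$-linearly; then extend along the additive envelope by $\bigoplus_i[n_i]\mapsto\bigoplus_iA^{\otimes n_i}$; then extend along the Karoubian envelope, using that $\mathcal T$ is Karoubian to split the image of $G_A(e)$ for every idempotent $e$. By construction $G_A$ is $\Bbbk$-linear and symmetric monoidal with $G_A(\mathfrak h)=A$ as a Frobenius algebra, and a morphism $A\to B$ in $\mathcal T^f_\nu$ induces a monoidal natural transformation $G_A\Rightarrow G_B$.

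\emph{The main obstacle} is the well-definedness and functoriality of $G_A$ on $\Rep^0(S_\nu;\Bbbk)$: one must show that distinct presentations of the morphism of a partition as a composite of elementary pieces give the same arrow of $\mathcal T$, and that the composition rule of $\Rep^0$ — in which every connected component of a stacked diagram lying entirely ``in the middle'' contributes a factor $\nu$ — is matched in $\mathcal T$, where each such component is a closed network of $m_A,u_A,\delta_A,\varepsilon_A$ that, by the special commutative Frobenius relations, evaluates to $\dim A=\nu$. This is precisely the assertion that the partition category underlying $\Rep^0(S_\nu;\Bbbk)$ is the free $\Bbbk$-linear symmetric monoidal category on a special commutative Frobenius object of dimension $\nu$, i.e.\ admits a presentation by the generators $m,u,\delta,\varepsilon$ subject exactly to the commutative Frobenius axioms and $\dim=\nu$. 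I would either invoke this coherence theorem, as in \cite{deligne2007categorie} and the literature on Frobenius and partition categories, or prove it by the standard normal-form argument: using the Frobenius relations, rewrite any composite of structure maps into a canonical ``comb'' form depending only on the isomorphism type of the underlying partition, and verify that equal normal forms correspond to equal partitions. Granting this, the two constructions are mutually quasi-inverse: $A\mapsto G_A\mapsto G_A(\mathfrak h)$ is the identity by construction, while for $F\mapsto F(\mathfrak h)\mapsto G_{F(\mathfrak h)}$ the monoidal constraints $F(\mathfrak h)^{\otimes n}\cong F([n])$ assemble into a monoidal natural isomorphism $G_{F(\mathfrak h)}\Rightarrow F$ — both functors being manufactured from the Frobenius structure of $F(\mathfrak h)$ in identical fashion on every morphism of $\Rep^0$, hence, after the forced additive and Karoubian extensions, everywhere. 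The remaining check that the Hom-bijections respect composition — monoidal natural transformations matching Frobenius algebra homomorphisms — is then routine, and we conclude that $F\mapsto F(\mathfrak h)$ is an equivalence onto $\mathcal T^f_\nu$.
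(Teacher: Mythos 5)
The paper offers no proof of this statement at all---it is imported verbatim from Deligne (Proposition 8.3 of \cite{deligne2007categorie})---so your attempt can only be measured against Deligne's argument, not against anything in this paper. Your overall strategy is the right and standard one: exhibit $\mathfrak h$ with its partition-theoretic multiplication, unit, comultiplication and counit as a special commutative Frobenius object of dimension $\nu$, send $F\mapsto F(\mathfrak h)$, and build the quasi-inverse $G_A$ on the skeleton $[n]\mapsto A^{\otimes n}$ by evaluating each partition blockwise, then pass through the additive and Karoubian envelopes. You also correctly identify where the parameter $\nu$ enters (middle components of a stacked diagram) and that specialness $\mu\circ\delta=\Id$ is what makes every closed connected network evaluate to $\nu$ regardless of its internal complexity. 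The problem is that the step you label ``the main obstacle'' is not an obstacle on the way to the theorem---it \emph{is} the theorem. The assertion that the partition category is the free $\Bbbk$-linear symmetric monoidal category on a special commutative Frobenius object of dimension $\nu$ (equivalently, that $G_A$ is well defined on morphisms and compatible with the composition rule of $\Rep^0(S_\nu;\Bbbk)$, including the $\nu$-factors) is exactly the content of Deligne's 8.3, and his proof consists of carrying out this verification. Writing ``I would either invoke this coherence theorem \dots or prove it by the standard normal-form argument'' therefore either assumes the proposition or replaces its proof by a sketch; the normal-form/rewriting argument (showing that any two factorizations of a partition into elementary merges, splits, creations and deletions agree in $\mathcal T$, and that stacking partitions matches composition of the associated spider operations) is nontrivial and is not actually carried out.

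A second, smaller gap: you quietly change the definition of $\mathcal T^f_\nu$. In the paper (see the remark following the proposition) an object of $\mathcal T^f_\nu$ is a commutative algebra $(A,\mu_A,1_A)$ such that the pairing $\Tr\circ\mu_A$ built from the \emph{canonical categorical trace} is nondegenerate, with dimension $\nu$; no comultiplication is given as data and ``special'' is not an axiom. Your quasi-inverse instead takes as input a special commutative Frobenius algebra with $\varepsilon\circ u=\nu$. For the proposition as stated you must reconcile the two notions: show that for an algebra with nondegenerate canonical trace pairing the comultiplication adjoint to $\mu_A$ is automatically coassociative, satisfies the Frobenius identities, and is special (the converse---that for a special structure the counit is forced to coincide with the categorical trace---is easy). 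Without this reconciliation your two constructions are not literally inverse equivalences between the categories named in the statement. Finally, a cosmetic point: the extension over the additive envelope uses direct sums in $\mathcal T$, so you should note that ``Karoubian'' is being used in the pseudo-abelian sense (additive and idempotent-complete), as in Deligne.
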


The important consequence of this result is that for every commutative Frobenius algebra $A$ in a Karoubian symmetric category $\mathcal T$ of dimension $\nu$, we have a symmetric monoidal functor from $\Rep(S_\nu; \Bbbk)$ to $\mathcal T$ which sends $\mathfrak h$ to $A$.

\begin{rem} 
Here by a commutative Frobenius algebra in $\mathcal T$ we mean an object $A$ with the  following structure. It is an associative commutative algebra with the corresponding algebraic structure given by $\mu_A,1_A$, and if we define a map:
$$
\Tr: A \xrightarrow{1 \otimes {\rm coev}_A} A \otimes A \otimes A^* \xrightarrow{\mu_A \otimes 1} A \otimes A^* \xrightarrow{{\rm ev}_A} \mathds{1},
$$
then the pairing $A \otimes A \xrightarrow{\mu_A} A \xrightarrow{\Tr} \mathds{1}$ is required to be non-degenerate, i.e., it corresponds to an isomorphism between $A$ and $A^*$ under the identification of $\Hom_{\mathcal T}(A\otimes A,\mathds{1})$ with $\Hom_{\mathcal T}(A, A^*)$.
\end{rem}

In the rest of the paper we will use Deligne categories over the following fields:
\begin{def0} \label{extdef}
For $\nu \in \mathbb C$ set $\Rep(S_\nu) := \Rep(S_\nu;\mathbb C)$. And for $\nu \in \overline{\mathbb C(\nu)}$ set \linebreak $\Rep^{\rm ext}(S_\nu) := \Rep(S_\nu; \overline{\mathbb C(\nu)})$.
\end{def0}

\begin{rem}
Note that although $\mathbb C$ and $\overline{\mathbb C(\nu)}$ are isomorphic as fields, such isomorphism is not canonical. Thus it will be convenient to distinguish them in the following discussions.
\end{rem}

\subsection{Deligne category $\Rep(S_\nu)$ as an ultraproducts}

\subsubsection{The category $\Rep(S_\nu)$ as an ultraproduct}

In this section we will  show how to construct $\Rep(S_\nu)$ using ultraproducts, and discuss some important consequences of this construction. We will omit most of the proofs here. For a more elaborate version of this discussion see Section 3.2.1 in \cite{etingof2020new}. This method is very useful, because it allows one to transfer all kinds of constructions and their properties from the case of finite rank categories almost automatically. The main ideas of this approach were contained in \cite{deligne2007categorie},\cite{harman2016deligne}\footnote{For the similar discussion about $\Rep(GL_\nu)$ see \cite{deligne2007categorie}, \cite{harman2016deligne}, \cite{kalinov2018finite}.}.

The idea is to construct the category $\Rep(S_\nu)$ for non-integer $\nu$ as a full subcategory in the ultraproduct category following Example \ref{catex}. We have the following result (See the introduction of \cite{deligne2007categorie} or Theorem 1.1 in \cite{harman2016deligne}):

\begin{thm} \label{ultdelthm}

\textbf{a)} Suppose $\nu\in \mathbb C$ is transcendental. Consider $\widehat{\mathcal C} = \prod_{\mathcal{F}} \textbf{Rep}^f_0(S_n)$. Set $\mathfrak h_\nu:= \prod^C_{\mathcal{F}}\mathfrak h_n$. Fix an isomorphism $\prod_{\mathcal F}\overline{\mathbb Q}\simeq \mathbb C$ such that $\prod_{\mathcal F} i = \nu$. Then the full subcategory of the $\prod_{\mathcal F}\overline{\mathbb Q}$-linear category $\widehat{\mathcal C}$ generated by $\mathfrak h_\nu$ under taking tensor products, direct sums and direct summands is equivalent to the $\mathbb C$--linear category $\Rep(S_\nu)$, in a way consistent with the fixed isomorphism $\prod_{\mathcal F}\overline{\mathbb Q} \simeq \mathbb C$.

\textbf{b)} Suppose $\nu \in \mathbb C$ is algebraic but not a nonnegative integer. Fix a sequence of distinct primes $p_n$, a sequence of integers $\nu_n$, and an isomorphism $\prod_{\mathcal F}\overline{\mathbb F}_{p_n}\simeq \mathbb C$ such that \linebreak $\prod_{\mathcal F}\nu_n = \nu$. Set $\widehat{\mathcal C}: = \prod_{\mathcal{F}} \textbf{Rep}^f_{p_n}(S_{\nu_n})$. Set $\mathfrak h_\nu := \prod_{\mathcal{F}}^C\mathfrak h_{p_n}^{\nu_n}$. Then the  full subcategory of the $\prod_{\mathcal F}\overline{\mathbb F}_{p_n}$-linear category $\widehat{\mathcal C}$ generated by $\mathfrak h_\nu$ under taking  tensor products, direct sums and direct summands is equivalent to the $\mathbb C$-linear category $\Rep(S_\nu)$,  in a way consistent with the fixed isomorphism $\prod_{\mathcal F}\overline{\mathbb F}_{p_n}\simeq \mathbb C$.
\end{thm}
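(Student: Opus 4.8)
The plan is to build the equivalence directly from Deligne's universal property (Proposition \ref{unprop}), treating the ultraproduct category $\widehat{\mathcal C}$ as the target Karoubian symmetric monoidal category and exhibiting $\mathfrak h_\nu$ as a commutative Frobenius algebra of dimension $\nu$ in it. First I would fix the isomorphism of base fields $\prod_{\mathcal F}{\mathbb F}_{\ast}\simeq\mathbb C$ as in Examples \ref{fieldextrans} and \ref{fieldexalg}, so that $\prod_{\mathcal F}i=\nu$ (resp. $\prod_{\mathcal F}\nu_n=\nu$), and observe that $\widehat{\mathcal C}$ is a $\Bbbk'$-linear Karoubian (idempotent-complete — this is a first-order-ish property that survives the ultraproduct, or one simply passes to the Karoubian envelope) symmetric monoidal category, by Łoś's theorem applied to the categorical axioms as in Example \ref{catex}. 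Then I would note that each permutation representation $\mathfrak h_n\in\Repb^f_{\ast}(S_n)$ (or $\Repb^f_{p_n}(S_{\nu_n})$) carries a canonical commutative Frobenius algebra structure: the algebra of functions on the $n$-point set, with multiplication the pointwise product (the $S_n$-equivariant map $\mathfrak h_n\otimes\mathfrak h_n\to\mathfrak h_n$ dual to the diagonal), unit the all-ones vector, and trace the sum-of-coordinates functional; its dimension, as a categorical dimension, is exactly $n$ (resp. $\nu_n$).

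Next I would take the ultraproduct of these structure morphisms to equip $\mathfrak h_\nu=\prod^C_{\mathcal F}\mathfrak h_n$ with a commutative Frobenius algebra structure in $\widehat{\mathcal C}$. By Łoś's theorem, each equational axiom (associativity, commutativity, unit, Frobenius/non-degeneracy of the pairing) holds for $\mathfrak h_\nu$ because it holds for $\mathfrak h_n$ for all $n$; and the categorical dimension of $\mathfrak h_\nu$ is $\prod_{\mathcal F} n=\nu$ (resp. $\prod_{\mathcal F}\nu_n=\nu$) under the chosen field isomorphism, since categorical dimension is computed by a morphism composite that commutes with the ultraproduct. The universal property of Proposition \ref{unprop} then produces a $\Bbbk'$-linear symmetric monoidal functor $F\colon\Rep(S_\nu;\Bbbk')\to\widehat{\mathcal C}$ with $F(\mathfrak h)=\mathfrak h_\nu$. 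Since $\Rep(S_\nu;\Bbbk')\simeq\Rep(S_\nu)$ after transporting along the field isomorphism, and since the essential image of $F$ is exactly the full monoidal-Karoubian subcategory generated by $\mathfrak h_\nu$, it remains to show $F$ is fully faithful; then $F$ is an equivalence onto that subcategory, which is the assertion.

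**The hard part will be full faithfulness**, i.e. showing that the map $\Hom_{\Rep(S_\nu)}(\mathfrak h^{\otimes a},\mathfrak h^{\otimes b})\to\Hom_{\widehat{\mathcal C}}(\mathfrak h_\nu^{\otimes a},\mathfrak h_\nu^{\otimes b})=\prod_{\mathcal F}\Hom_{S_n}(\mathfrak h_n^{\otimes a},\mathfrak h_n^{\otimes b})$ is bijective for all $a,b$. Here I would use the combinatorial description of both sides: $\Hom_{\Rep(S_\nu)}(\mathfrak h^{\otimes a},\mathfrak h^{\otimes b})$ has a basis indexed by partitions of the set $\{1,\dots,a+b\}$ (the "partition diagrams"), with composition a deformation depending polynomially on $\nu$; on the finite-rank side, for $n\ge a+b$ these same partition diagrams give a basis of $\Hom_{S_n}(\mathfrak h_n^{\otimes a},\mathfrak h_n^{\otimes b})$ (this is the classical fact underlying the partition algebra, and the structure constants are the same polynomials evaluated at $n$). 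Since $a+b$ is fixed and $\mathcal F$ is non-principal, $n\ge a+b$ holds for almost all $n$, so the ultraproduct $\prod_{\mathcal F}\Hom_{S_n}(\mathfrak h_n^{\otimes a},\mathfrak h_n^{\otimes b})$ is free on the partition-diagram basis over $\Bbbk'$ with structure constants the given polynomials evaluated at $\prod_{\mathcal F}n=\nu$ — hence canonically the same space as $\Hom_{\Rep(S_\nu)}(\mathfrak h^{\otimes a},\mathfrak h^{\otimes b})$, and $F$ is the identity on these bases. One subtlety to address is that in part (b) the objects are representations of $S_{\nu_n}$ over $\Fpn$ with $\nu_n$ possibly less than $a+b$ for some $n$; but non-principality of $\mathcal F$ together with $\nu_n\to\infty$ along $\mathcal F$ (which we may arrange, or which follows since $\nu$ is not a nonnegative integer so only finitely many $\nu_n$ can equal any given value... more carefully, one chooses the $\nu_n$ unbounded) again ensures $\nu_n\ge a+b$ almost always, and the characteristic $p_n\to\infty$ ensures no small-characteristic degeneracies interfere. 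Finally, one checks that passing to direct sums and Karoubian envelopes on both sides preserves the equivalence, which is formal once full faithfulness on the $\mathfrak h^{\otimes a}$ is known, because every object of $\Rep(S_\nu)$ is a summand of some $\mathfrak h^{\otimes a}$ and idempotents lift.
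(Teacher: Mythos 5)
Your proposal is correct, and it follows essentially the same argument as the one the paper relies on by citation (the paper itself gives no proof, deferring to Deligne's introduction and Harman's Theorem 1.1): the heart in both cases is that $\Hom$-spaces between tensor powers of the permutation object are spanned by partition diagrams, linearly independent once $n\ge a+b$ (which holds for almost all $n$ since $\nu\notin\mathbb Z_{\ge 0}$), with composition constants polynomial in $n$, so Łoś's theorem identifies the ultraproduct $\Hom$-spaces with those of $\Rep(S_\nu)$ at $\prod_{\mathcal F}n=\nu$, after which passing to additive and Karoubian envelopes is formal. The only cosmetic difference is that you obtain the interpolation functor from the universal property of Proposition \ref{unprop} applied to the Frobenius algebra $\mathfrak h_\nu$, rather than defining it directly on partition diagrams as in the cited proofs; both routes require the same full-faithfulness computation.
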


\begin{rem}
Note that for the purposes of this theorem we could also have used the categories $\Repb_{p_n}(S_{\nu_n})$.
\end{rem}

We can formulate a similar result for $\Rep^{\rm ext}(S_\nu)$:
\begin{cor} \label{ultdelcor}
Fix an isomorphism $\prodF \FQ \simeq \overline{\mathbb C(\nu)}$ such that $\prodF n = \nu$.  Set \linebreak $\widehat{\mathcal C} = \prod_{\mathcal{F}} \textbf{Rep}^f_{0}(S_{n})$. Set $\mathfrak h_\nu = \prod_{\mathcal{F}}^C\mathfrak h_{n}$. Then the  full subcategory of the $\prod_{\mathcal F}\FQ$-linear category $\widehat{\mathcal C}$ generated by $\mathfrak h_\nu$ under taking  tensor products, direct sums and direct summands is equivalent to the $\Cext$-linear category $\Rep(S_\nu)$,  in a way consistent with the fixed isomorphism $\prod_{\mathcal F}\FQ\simeq \Cext$.
\end{cor}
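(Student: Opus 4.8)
The plan is to run the proof of Theorem \ref{ultdelthm}(a) essentially verbatim, the only modification being that the algebraically closed field of characteristic zero and cardinality continuum into which $\prodF\FQ$ is identified is now $\Cext$ instead of $\mathbb C$. This substitution is harmless: by Example \ref{fieldextrans} the element $\prodF n \in \prodF\FQ$ is transcendental over $\mathbb Q$, and since $\Cext$ has cardinality continuum, Steinitz's theorem provides an isomorphism $\prodF\FQ \simeq \Cext$ sending $\prodF n$ to $\nu$ (this is exactly the isomorphism fixed in the statement); moreover $\nu$, being transcendental, is not a nonnegative integer, so $\Rep(S_\nu;\Cext)$ is a semisimple tensor category. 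Nothing in the argument below uses any property of the target field beyond these.

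First I would verify that $\mathfrak h_\nu = \prodF^C\mathfrak h_n$ is a commutative Frobenius algebra of categorical dimension $\nu$ in $\widehat{\mathcal C}$. For each $n$ the permutation representation $\mathfrak h_n$ is the function algebra $\Fun(\{1,\dots,n\})$, a commutative Frobenius algebra in $\Repb^f_0(S_n)$ of dimension $n$ with trace given by the sum of coordinates; all the Frobenius-algebra identities, as well as $\dim\mathfrak h_n = n$, are first-order statements about the morphism spaces, so \lthm{} transports them to $\mathfrak h_\nu$, whose dimension becomes $\prodF n = \nu$. Proposition \ref{unprop}, applied to the Karoubian symmetric monoidal category $\widehat{\mathcal C}$, then yields a $\Cext$-linear symmetric monoidal functor $F\colon \Rep(S_\nu;\Cext)\to\widehat{\mathcal C}$ with $F(\mathfrak h)=\mathfrak h_\nu$. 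Because $\Rep(S_\nu;\Cext)$ is generated by $\mathfrak h$ under tensor products, direct sums and direct summands and $F$ respects all three operations, the essential image of $F$ is exactly the full subcategory of $\widehat{\mathcal C}$ described in the statement.

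It then remains to show that $F$ is fully faithful, since a fully faithful additive functor with that essential image is an equivalence onto it. By semisimplicity this reduces to matching $\Hom$-spaces between tensor powers of $\mathfrak h$ and of $\mathfrak h_\nu$. On the Deligne side, $\dim_{\Cext}\Hom_{\Rep(S_\nu;\Cext)}(\mathfrak h^{\otimes a},\mathfrak h^{\otimes b})$ equals the number of set partitions of $\{1,\dots,a+b\}$ (independent of $\nu$ once $\nu\notin\mathbb Z_{\ge 0}$), with the partition diagrams forming a basis. On the ultraproduct side, $\Hom_{\widehat{\mathcal C}}(\mathfrak h_\nu^{\otimes a},\mathfrak h_\nu^{\otimes b}) = \prodF\Hom_{S_n}(\mathfrak h_n^{\otimes a},\mathfrak h_n^{\otimes b})$, and since $\mathcal F$ is non-principal, $n\to\infty$ along $\mathcal F$, so for almost all $n$ this space also has dimension equal to that same number of set partitions, again with the partition diagrams as a basis. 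The functor $F$ sends a partition diagram to the ultraproduct of the same diagram in each $\Repb^f_0(S_n)$, and these ultraproducts are linearly independent (being linearly independent for almost all $n$, by \lthm), so $F$ induces an injective linear map between $\Hom$-spaces of equal finite dimension, hence an isomorphism; full faithfulness on the additive Karoubian envelope follows formally, and the consistency with the fixed isomorphism $\prodF\FQ\simeq\Cext$ is built into the construction. The one genuinely delicate point is this final $\Hom$-space comparison — ensuring both that the dimension count stabilizes and that the diagrams stay linearly independent in the ultraproduct — but it is identical to the corresponding step in the proof of Theorem \ref{ultdelthm}; see Section 3.2.1 of \cite{etingof2020new}.
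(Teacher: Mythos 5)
Your proposal is correct and follows exactly the route the paper intends: the corollary is stated without proof precisely because it is the argument of Theorem \ref{ultdelthm}(a) (via the Frobenius-algebra universal property of Proposition \ref{unprop} and the stabilizing partition-diagram count for the $\Hom$-spaces, as in \cite{deligne2007categorie,harman2016deligne}) with the target field $\mathbb C$ replaced by $\Cext$, which is harmless by Steinitz. Your writeup fills in the same standard steps, so there is nothing to add.
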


\begin{rem}
As mentioned in the beginning of Section \ref{sectnot}, to treat the algebraic and transcendental cases simultaneously, it's useful to agree on the convention that by $\overline{\mathbb F}_0$ we will mean $\overline{\mathbb Q}$, and so the case $\nu_n = n$, $p_n = 0$ in the setting of part $(b)$ of Theorem \ref{ultdelthm} gives us transcendental $\nu$. Also below we will always assume that the sequences $p_n$ and $\nu_n$ are the sequences from Theorem \ref{ultdelthm} or Corollary \ref{ultdelcor} corresponding to the given $\nu$. Finally, we will work only with $\nu \in \mathbb C \backslash \mathbb Z_{\ge 0}$.
\end{rem}

Now we would like to explain why this construction of the Deligne categories is quite useful. To begin with, we would like to construct the simple objects $\mathcal X(\lambda)$ as ultraproducts. This is easy to do, using the notation from Definition \ref{defyoung}:
\begin{prop} \label{simpprop}
The irreducible object $\mathcal X(\lambda)$ of ${\rm Rep}(S_\nu)$ can be obtained as an ultraproduct of irreducible objects of ${\bf Rep}^f_{p_n}(S_{\nu_n})$ as $\mathcal X(\lambda) = \prod_{\mathcal F}^CX_{\nu_n}(\lambda|_{\nu_n})$.
\end{prop}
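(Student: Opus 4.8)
The plan is to verify that the ultraproduct $\prod_{\mathcal F}^C X_{\nu_n}(\lambda|_{\nu_n})$ is a well-defined nonzero object of the subcategory $\Rep(S_\nu)$ inside $\widehat{\mathcal C}$, and then identify it with $\mathcal X(\lambda)$ by matching it against the known characterization of the simple objects. First I would recall from Definition \ref{defyoung} that $\lambda|_{\nu_n} = (\nu_n - |\lambda|, \lambda_1, \dots, \lambda_{l(\lambda)})$ is a genuine Young diagram of size $\nu_n$ as soon as $\nu_n \ge \lambda_1 + |\lambda|$, which holds \fan{} because $\nu_n \to \infty$ along $\mathcal F$ (in the transcendental case $\nu_n = n$; in the algebraic case $\nu_n$ is chosen unbounded). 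Hence $X_{\nu_n}(\lambda|_{\nu_n})$ is defined \fan{} and the ultraproduct object $\prod_{\mathcal F}^C X_{\nu_n}(\lambda|_{\nu_n})$ makes sense in $\widehat{\mathcal C}$.

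Next I would show this object actually lies in the full subcategory $\Rep(S_\nu) \subset \widehat{\mathcal C}$ generated by $\mathfrak h_\nu$ under tensor products, sums and summands. The standard fact here is that in $\Repb_{p_n}^f(S_{\nu_n})$ the irreducible $X_{p_n}(\lambda|_{\nu_n})$ occurs as a direct summand of the permutation-type module $\mathfrak h_{\nu_n}^{\otimes |\lambda|}$ (equivalently of $[|\lambda|]$ in Deligne's combinatorial description), with a multiplicity and with idempotents cutting it out that are "stable in $n$" — i.e., describable by a fixed first-order/combinatorial recipe independent of $n$ for $n$ large. Taking the ultraproduct of these idempotents and applying \lthm{} gives an idempotent on $\mathfrak h_\nu^{\otimes|\lambda|}$ whose image is $\prod_{\mathcal F}^C X_{\nu_n}(\lambda|_{\nu_n})$; since $\Rep(S_\nu)$ is Karoubian this image lies in $\Rep(S_\nu)$. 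The same \lthm{} argument shows the object is nonzero (its endomorphism algebra is $\prod_{\mathcal F}\End_{S_{\nu_n}}(X_{p_n}(\lambda|_{\nu_n})) = \prod_{\mathcal F} \Fpn$, a field) and indecomposable, hence simple because $\Rep(S_\nu)$ is semisimple for $\nu \notin \mathbb Z_{\ge 0}$.

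It then remains to pin down \emph{which} simple object it is. I would do this using a numerical invariant that is preserved under ultraproducts and that separates the $\mathcal X(\mu)$: for instance the categorical dimension, or — more robustly — the action of the central element $\Omega$ together with the "lowest layer" in the filtration of $[|\lambda|]$. Concretely, $\dim \mathcal X(\lambda)$ is a fixed polynomial in $\nu$, and $\dim X_{p_n}(\lambda|_{\nu_n})$ is the value at $\nu_n$ of the very same polynomial (the hook-content formula for $\lambda|_{\nu_n}$ is, for $n$ large, a polynomial in $\nu_n$ depending only on $\lambda$); taking $\prod_{\mathcal F}$ and using $\prod_{\mathcal F}\nu_n = \nu$ identifies the dimension, and combined with the fact that $\mathcal X(\lambda)$ is the unique simple summand of $[|\lambda|]$ not appearing in $[m]$ for $m<|\lambda|$ (a property again transported by \lthm{} from the analogous statement about $X_{p_n}(\lambda|_{\nu_n})$ inside $\mathfrak h_{\nu_n}^{\otimes |\lambda|}$), this forces $\prod_{\mathcal F}^C X_{\nu_n}(\lambda|_{\nu_n}) \cong \mathcal X(\lambda)$.

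The main obstacle is the middle step: making precise the claim that the idempotents in $\End_{S_{\nu_n}}(\mathfrak h_{\nu_n}^{\otimes |\lambda|})$ projecting onto $X_{p_n}(\lambda|_{\nu_n})$ are given by a single $n$-independent formula in the partition-algebra / Deligne morphism basis, so that \lthm{} applies. This is exactly the content that underlies the ultraproduct construction of $\Rep(S_\nu)$ in Theorem \ref{ultdelthm}, so in practice I would invoke that theorem: under the equivalence of Theorem \ref{ultdelthm}, $\Hom$-spaces in $\Rep(S_\nu)$ are literally ultraproducts of $\Hom$-spaces in $\Repb^f_{p_n}(S_{\nu_n})$, the decomposition of $[k] = \mathfrak h_\nu^{\otimes k}$-type objects into indecomposables is the ultraproduct of the decompositions of $\mathfrak h_{\nu_n}^{\otimes k}$, and the labeling of summands by Young diagrams on both sides is compatible with $\lambda \mapsto \lambda|_{\nu_n}$ (this is the standard "stable" labeling of partition-algebra blocks). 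Granting that compatibility — which is where all the real work sits and which is cited rather than reproved — the identification $\mathcal X(\lambda) = \prod_{\mathcal F}^C X_{\nu_n}(\lambda|_{\nu_n})$ is immediate.
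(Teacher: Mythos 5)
The paper itself offers no argument for this proposition (it is quoted from the literature, with the reader referred to Section 3.2.1 of \cite{etingof2020new}), so your proposal has to be judged on its own terms. Your first two steps are essentially fine, and in fact the ``main obstacle'' you single out is not one: you do not need the idempotents cutting $X_{p_n}(\lambda|_{\nu_n})$ out of $\mathfrak h_{\nu_n}^{\otimes|\lambda|}$ to be given by a single $n$-independent formula. Any choice of idempotents $e_n$ has an ultraproduct $e=\prodF e_n$ lying in $\End(\mathfrak h_\nu^{\otimes|\lambda|})$, which is a Hom-space of the full subcategory of Theorem \ref{ultdelthm}; its image is a direct summand of $\mathfrak h_\nu^{\otimes|\lambda|}$, hence an object of ${\rm Rep}(S_\nu)$, and {\lthm} plus semisimplicity gives simplicity exactly as you say.

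The genuine gap is the identification step, i.e.\ deciding \emph{which} $\mathcal X(\mu)$ the ultraproduct is. Your uniqueness claim is false: every $\mathcal X(\mu)$ with $|\mu|=|\lambda|$ is a summand of $\mathfrak h^{\otimes|\lambda|}$ (and of $[|\lambda|]$) that does not occur at lower levels, and the same holds in finite rank, so {\lthm} transports nothing that singles out $\lambda$. The dimension argument is also incomplete: you would need that $\mu\mapsto\dim\mathcal X(\mu)$ separates partitions of size $|\lambda|$. The dimension polynomials $P_\mu$ are indeed pairwise distinct, but you do not prove this, and for algebraic $\nu$ distinctness of polynomials is not enough --- e.g.\ $\dim\mathcal X((3))-\dim\mathcal X((2,1))=\tfrac{\nu(-\nu^2+6\nu-11)}{6}$, so for $\nu$ a root of $\nu^2-6\nu+11$ the two candidates have equal categorical dimension; thus dimension alone cannot close the case of algebraic $\nu$, which the proposition covers. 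Finally, your fallback --- that ``the labeling of summands by Young diagrams on both sides is compatible with $\lambda\mapsto\lambda|_{\nu_n}$'' --- is precisely the content of the proposition being proved, and it is not contained in the statement of Theorem \ref{ultdelthm}, which only asserts an equivalence of categories; invoking it is circular. What is missing is an $n$-independent device distinguishing partitions of the same size $k$: for instance, induct on $k$ and use the $\Bbbk[S_k]$-action permuting the tensor factors of $\mathfrak h^{\otimes k}$ --- in finite rank the multiplicity space of the new summand $X_{p_n}(\mu|_{\nu_n})$ carries the irreducible $S_k$-representation labeled by $\mu$, so after removing the inductively identified lower summands, cutting with the fixed isotypic idempotent of $\mu$ in $\Bbbk[S_k]$ isolates $\prodF^C X_{p_n}(\mu|_{\nu_n})$ and matches it with $\mathcal X(\mu)$, whose multiplicity space in $\mathfrak h^{\otimes k}$ is the same $S_k$-irreducible. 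Some argument of this kind is needed to make the identification non-circular.
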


This result allows us to reformulate the definition of $\Rep(S_\nu)$ as an ultraproduct in the following way.
\begin{prop} \label{simplerultrthm}
In the notation of Theorem \ref{ultdelthm} the category ${\rm Rep}(S_\nu)$ can be described as the full subcategory of $\widehat{\mathcal C} ={\bf Rep}^f_{p_n}(S_{\nu_n})$ consisting of sequences of objects $Y_n= \bigoplus_{\alpha \in A_n} X_{p_n}(\lambda_{n,\alpha})$ for some indexing sets $A_n$ and Young diagrams $\lambda_{n,\alpha}$ such that both the sequence of $|A_n|$ and the sequence of $\max_{\alpha \in A_n}(|\lambda_{n,\alpha}| - (\lambda_{n,\alpha})_1)$, where $(\lambda_{n,\alpha})_1$ is the length of the first row, are bounded {\fan}.
\end{prop}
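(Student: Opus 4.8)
The plan is to prove the statement at the level of objects. By Theorem \ref{ultdelthm}, $\Rep(S_\nu)$ is realized inside $\widehat{\mathcal C}$ as the full subcategory generated by $\mathfrak h_\nu$ under tensor products, direct sums and direct summands; call it $\mathcal R$. Let $\mathcal A$ be the full subcategory of $\widehat{\mathcal C}$ whose objects are the ultraproducts $\prodF^C Y_n$ with $Y_n$ of the form prescribed in the statement. Since a full subcategory is determined by its class of objects, it suffices to show that $\mathcal R$ and $\mathcal A$ have the same objects up to isomorphism in $\widehat{\mathcal C}$, and the argument then splits into two inclusions.

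Before those, I would record three routine facts. (i) The nonnegative integers $\nu_n$ tend to infinity along $\mathcal F$: this is trivial in the transcendental case $\nu_n = n$, and in general if $\{n : \nu_n \le M\} \in \mathcal F$ for some $M$, then, this set being a finite disjoint union of the sets $\{n : \nu_n = c\}$ and $\mathcal F$ being an ultrafilter, some $\{n : \nu_n = c\}$ would lie in $\mathcal F$, forcing $\nu = \prodF \nu_n = c \in \mathbb Z_{\ge 0}$, contrary to the standing assumption; hence $\{n : \nu_n > M\} \in \mathcal F$ for every $M$. (ii) The operation $\prodF^C$ commutes with finite direct sums, which is immediate from the description of morphisms in $\widehat{\mathcal C}$ (or from \lthm). (iii) By semisimplicity of $\Rep(S_\nu)$, every object of $\mathcal R$ is isomorphic to a finite direct sum $\bigoplus_{i=1}^m \mathcal X(\lambda_i)$ of simples, and by Proposition \ref{simpprop} each $\mathcal X(\lambda_i)$ equals $\prodF^C X_{p_n}(\lambda_i|_{\nu_n})$.

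For $\mathcal R \subseteq \mathcal A$: combining (ii) and (iii), an object $\bigoplus_{i=1}^m \mathcal X(\lambda_i)$ of $\mathcal R$ is isomorphic to $\prodF^C\bigl(\bigoplus_{i=1}^m X_{p_n}(\lambda_i|_{\nu_n})\bigr)$, and here $A_n = \{1,\dots,m\}$ has constant size $m$ while each $\lambda_i|_{\nu_n} = (\nu_n - |\lambda_i|, \lambda_i)$ has exactly $|\lambda_i|$ boxes outside its first row, so both boundedness conditions of the statement hold and the object lies in $\mathcal A$. The reverse inclusion $\mathcal A \subseteq \mathcal R$ is the substantive part, and the main obstacle is that the indexing sets $A_n$ and the diagrams $\lambda_{n,\alpha}$ may all vary with $n$, so boundedness must be exploited to stabilize this data along $\mathcal F$. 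Starting from $Y_n = \bigoplus_{\alpha \in A_n} X_{p_n}(\lambda_{n,\alpha})$ with $|A_n| \le N$ and $\max_\alpha\bigl(|\lambda_{n,\alpha}| - (\lambda_{n,\alpha})_1\bigr) \le D$ \fan, I would first reorder each sum so that $A_n = \{1,\dots,|A_n|\}$, then pass to a set of $\mathcal F$ on which $|A_n|$ equals a fixed constant $N'$ (it takes only finitely many values). For each fixed $\alpha \in \{1,\dots,N'\}$, removing the first row of $\lambda_{n,\alpha}$ yields a Young diagram $\mu_{n,\alpha}$ with at most $D$ boxes; since only finitely many such diagrams exist, the ultrafilter gives a single diagram $\mu_\alpha$ with $\mu_{n,\alpha} = \mu_\alpha$ \fan. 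Because $|\lambda_{n,\alpha}| = \nu_n$, this forces $\lambda_{n,\alpha} = (\nu_n - |\mu_\alpha|, \mu_\alpha)$, which by (i) is precisely $\mu_\alpha|_{\nu_n}$ in the sense of Definition \ref{defyoung}, \fan. Hence $Y_n = \bigoplus_{\alpha = 1}^{N'} X_{p_n}(\mu_\alpha|_{\nu_n})$ \fan, and by (ii) and Proposition \ref{simpprop}, $\prodF^C Y_n \cong \bigoplus_{\alpha = 1}^{N'} \mathcal X(\mu_\alpha) \in \mathcal R$. Combining the two inclusions would then give that $\mathcal A$ and $\mathcal R$ have the same objects, hence $\mathcal A = \mathcal R = \Rep(S_\nu)$ as full subcategories of $\widehat{\mathcal C}$, which is the content of the proposition.
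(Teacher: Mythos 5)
Your argument is correct and is exactly the fleshing-out of the paper's intended reasoning: the paper derives this proposition from Proposition \ref{simpprop} together with semisimplicity of $\Rep(S_\nu)$, and your reverse inclusion (stabilizing $|A_n|$ and the truncated diagrams along the ultrafilter using the two boundedness hypotheses, then invoking Proposition \ref{simpprop} and compatibility of $\prodF^C$ with finite direct sums) is precisely the standard way to make that implication explicit. No gaps to report.
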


We will also need to explain how to interpolate the central element $\Omega_n \in \Bbbk[S_n]$ to $\Rep(S_\nu)$. Recall that we can consider the central elements of $\Bbbk[S_{\nu_n}]$ as endomorphisms of the identity functor of $\Repb_{p_n}(S_{\nu_n})$.
\begin{def0} \label{delcentr} 
Denote by $\Omega$ the endomorphism of the identity functor of $\Rep(S_\nu)$ given by the restriction of the endomorphism $\prodF \Omega_{\nu_n}$.
\end{def0}
One can easily calculate the action of $\Omega$ on simple objects.
\begin{prop} 
\cite{etingof2014representation} The action of $\Omega$ on an object $\mathcal X(\lambda)$ is given by: $$
\Omega|_{\mathcal X(\lambda)} = \left(\ct(\lambda) - |\lambda| +\frac{(\nu - |\lambda|)(\nu - |\lambda|-1)}{2}\right) 1_{\mathcal X(\lambda)}  .
$$
\end{prop}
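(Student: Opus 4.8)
The plan is to reduce the computation of $\Omega|_{\mathcal X(\lambda)}$ to a known formula for the action of the Jucys--Murphy/content central element $\Omega_{\nu_n}$ on the simple object $X_{p_n}(\lambda|_{\nu_n})$, and then take the ultraproduct. By Proposition~\ref{simpprop} we have $\mathcal X(\lambda) = \prod_{\mathcal F}^C X_{\nu_n}(\lambda|_{\nu_n})$, and by Definition~\ref{delcentr} the endomorphism $\Omega$ is the restriction of $\prod_{\mathcal F}\Omega_{\nu_n}$. Since $\Omega_m = \sum_{1\le i<j\le m} s_{ij}$ is central in $\Bbbk[S_m]$, it acts on the irreducible $S_m$-representation labelled by a Young diagram $\mu$ with $|\mu| = m$ by the scalar $\ct(\mu)$ (this is the Remark just after Definition~\ref{centraldef}). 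Hence $\Omega_{\nu_n}$ acts on $X_{\nu_n}(\lambda|_{\nu_n})$ by the scalar $\ct(\lambda|_{\nu_n})$, and by \lthm{} (or simply by the definition of the ultraproduct of scalars) $\Omega$ acts on $\mathcal X(\lambda)$ by $\prod_{\mathcal F}\ct(\lambda|_{\nu_n})$, interpreted inside $\mathbb C$ via the fixed isomorphism with $\prod_{\mathcal F}\overline{\mathbb F}_{p_n}$ sending $\prod_{\mathcal F}\nu_n$ to $\nu$.

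So the real content is a finite, elementary computation: express $\ct(\lambda|_{\nu_n})$ in terms of $\ct(\lambda)$, $|\lambda|$ and $\nu_n$, and then recognize the resulting expression as the ultraproduct of $\ct(\lambda) - |\lambda| + \frac{(\nu_n - |\lambda|)(\nu_n - |\lambda| - 1)}{2}$. Recall from Definition~\ref{defyoung} that $\lambda|_{n}$ is obtained from $\lambda$ by adding a new (long) first row of length $n - |\lambda|$ on top, pushing the old rows of $\lambda$ down by one. The content is additive over boxes, so $\ct(\lambda|_{\nu_n})$ splits into two contributions: the contents of the $\nu_n - |\lambda|$ boxes in the new first row, and the contents of the boxes coming from $\lambda$ itself, now shifted. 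A box that sat at position $(i,j)$ in $\lambda$ now sits at $(i+1, j)$ in $\lambda|_{\nu_n}$, so its content changes from $j - i$ to $j - i - 1$; summing over the $|\lambda|$ boxes of $\lambda$ gives $\ct(\lambda) - |\lambda|$. The new first row occupies positions $(1, 1), (1, 2), \dots, (1, \nu_n - |\lambda|)$, with contents $0, 1, \dots, \nu_n - |\lambda| - 1$, summing to $\binom{\nu_n - |\lambda|}{2} = \frac{(\nu_n - |\lambda|)(\nu_n - |\lambda| - 1)}{2}$. Adding the two pieces yields exactly $\ct(\lambda) - |\lambda| + \frac{(\nu_n - |\lambda|)(\nu_n - |\lambda| - 1)}{2}$, which is valid for all $n$ with $\nu_n \ge \lambda_1 + |\lambda|$, hence for almost all $n$.

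Taking the ultraproduct over $\mathcal F$ and using that $\prod_{\mathcal F}\nu_n = \nu$ while $\ct(\lambda)$ and $|\lambda|$ are constant in $n$, we obtain $\Omega|_{\mathcal X(\lambda)} = \bigl(\ct(\lambda) - |\lambda| + \frac{(\nu - |\lambda|)(\nu - |\lambda| - 1)}{2}\bigr) 1_{\mathcal X(\lambda)}$, as claimed. I do not anticipate a genuine obstacle here: the only point requiring a small amount of care is making sure the content-shift bookkeeping for $\lambda|_{\nu_n}$ is correct (in particular that the length condition $\nu_n \ge \lambda_1 + |\lambda|$ in Definition~\ref{defyoung} holds for almost all $n$, which is automatic since $\nu_n \to \infty$ along $\mathcal F$ for non-nonnegative-integer $\nu$), and confirming that the scalar-by-which-a-central-element-acts is compatible with ultraproducts — which is immediate from \lthm{} applied to the first-order statement ``$\Omega_{\nu_n}$ acts on $X_{\nu_n}(\lambda|_{\nu_n})$ as multiplication by $c_n$''. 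An alternative, if one prefers to avoid the ultraproduct bookkeeping entirely, is to cite the formula directly from \cite{etingof2014representation} as the statement already does; but the ultraproduct derivation above has the advantage of being self-contained given the earlier sections.
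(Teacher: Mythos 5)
Your proposal is correct, but it is worth noting that the paper itself offers no argument here at all: the proposition is simply quoted from \cite{etingof2014representation}, where it is proved intrinsically in the Deligne category (via the interpolated element $\Omega$ acting on $\mathcal X(\lambda)$ as a direct summand of $\mathfrak h^{\otimes |\lambda|}$). Your route instead derives the formula inside the paper's own ultraproduct framework, which is a legitimately different and self-contained alternative. The key steps all check out: by Proposition \ref{simpprop} and Definition \ref{delcentr}, $\Omega$ restricted to $\mathcal X(\lambda)=\prodF^C X_{p_n}(\lambda|_{\nu_n})$ is $\prodF \Omega_{\nu_n}$, which acts by the scalar $\ct(\lambda|_{\nu_n})$ (here one uses $\nu_n<p_n$, so the remark after Definition \ref{centraldef} applies); your box-by-box bookkeeping $\ct(\lambda|_{\nu_n})=\ct(\lambda)-|\lambda|+\tfrac{(\nu_n-|\lambda|)(\nu_n-|\lambda|-1)}{2}$ is exactly right, since the new first row contributes $0+1+\dots+(\nu_n-|\lambda|-1)$ and each old box drops in content by $1$; and passing to the ultraproduct with $\prodF\nu_n=\nu$ gives the claim (division by $2$ is harmless since $p_n\ne 2$ for almost all $n$). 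Your side remark that $\lambda|_{\nu_n}$ is defined for almost all $n$ is also justified: if $\nu_n$ were bounded on a set of the ultrafilter it would be constant on a set of the ultrafilter, forcing $\nu\in\mathbb Z_{\ge 0}$, contrary to assumption. What your approach buys is independence from the external reference at the cost of redoing a small combinatorial computation; what the citation buys is brevity and a formula valid by construction of $\Omega$ in \cite{etingof2014representation} without fixing an ultrafilter realization.
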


\begin{rem}
Note that all of the results of this Section work mutatis mutandis for $\Rep^{\rm ext}(S_\nu)$ (see Definition \ref{extdef}).
\end{rem}

Now we would like to give the reader a general idea of how this can be used to transfer constructions and facts from  representation theory in  finite rank to the context of Deligne categories.

Suppose we have a representation-theoretic structure $\mathcal Y_n$ in each $\Repb_{p_n}(S_{\nu_n})$ which can be constructed uniformly in an element-free way for every $n$. Then we can define the same structure $\mathcal Y$ in $\Rep(S_\nu)$ using the analogs of the same objects and maps. Since the definitions are the same, it would follow that $\mathcal Y = \prodF \mathcal Y_n$. Now one can try to transfer the properties of $\mathcal Y_n$ to $\mathcal Y$. For some it can be as easy as a direct application of \lthm. Others require quite a bit of technical work before one can do that. For some results of this type see \cite{kalinov2018finite,harman2020classification,etingof2020new}.

Oftentimes the structure $\mathcal Y$ might include some ind-objects of $\Rep(S_\nu)$. This will happen, for example, when we will try to define the rational Cherednik algebra  in $\Rep(S_\nu)$. Thus we  will deal with ind-objects in the ultraproduct setting in the next subsection.

\subsubsection{Ind-objects of $\Rep(S_\nu)$ as restricted ultraproducts}

In this section we are going to explain how ind-objects of $\Rep(S_\nu)$ can be obtained as restricted ultraproducts, thus extending Theorem \ref{ultdelthm} in a certain way. 

To do that, we will use the result of Construction \ref{indcons}.

\begin{prop} \label{inddelcons}
Suppose we have a sequence of representations  $M_n \in {\bf Rep}_{p_n}(S_{\nu_n})$, with fixed filtration by subrepresentations of finite length. i.e., we have $F^iM_n \in {\bf Rep}^f_{p_n}(S_{\nu_n})$ such that $\bigcup_{i \in \mathbb N}F^iM_n = M_n$. Also suppose that $\prodF^CF^iM_n \in {\rm Rep}(S_\nu)$. Then it follows that $M = \prodF^{C,r}M_n = \bigcup_{i \in \mathbb N}\prodF^C F^iM_n$ is an object of ${\rm IND}({\rm Rep}(S_\nu))$.\footnote{One can also define, through a more involved construction, the category $\IND(\Rep(S_\nu))$ as a subcategory of $\prodF \Repb_{p_n}(S_{\nu_n})$. Note that this subcategory will not be full. In this way one would also be able to consider $\prodF^C\bigcup_{i \in \mathbb N}F^iM_n$, i.e., take the ultraproduct directly. It can be shown that this would define the same object $M$.}
\end{prop}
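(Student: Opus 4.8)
The plan is to verify directly that $M = \bigcup_{i \in \mathbb N}\prodF^C F^iM_n$ fits into Construction \ref{indcons} with $\mathcal C = \Rep(S_\nu)$, so that the conclusion becomes a formal consequence of what was already established there. First I would record that, by hypothesis, each $\prodF^C F^iM_n$ is an object of $\Rep(S_\nu)$, so we are dealing with a genuine sequence of objects of $\mathcal C$ rather than of the ambient ultraproduct category $\widehat{\mathcal C}$. Next I would produce the nested chain of subobjects $\prodF^C F^0M_n \hookrightarrow \prodF^C F^1M_n \hookrightarrow \dots$ inside $\Rep(S_\nu)$: the inclusions $F^iM_n \hookrightarrow F^{i+1}M_n$ are monomorphisms in $\Repb_{p_n}(S_{\nu_n})$ for almost all $n$, and by \lthm{} (applied to the first-order statement ``$f_n$ is injective'', i.e. $\ker f_n = 0$) their ultraproduct $\prodF f_n$ is a monomorphism in $\widehat{\mathcal C}$; since $\Rep(S_\nu)$ is a full subcategory, it is a monomorphism there as well. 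This is exactly the data $0 = X_0 \subset X_1 \subset X_2 \subset \dots$ required by Construction \ref{indcons}.

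Then I would invoke Construction \ref{indcons} verbatim: given such a nested sequence in a semisimple category $\mathcal C$ — and $\Rep(S_\nu)$ is semisimple because $\nu \notin \mathbb Z_{\ge 0}$ — its formal colimit $\bigcup_{i \in \mathbb N}\prodF^C F^iM_n$ is a well-defined object of $\IND(\Rep(S_\nu))$, with the explicit description in terms of Definition \ref{inddef} obtained by writing each $\prodF^C F^iM_n = \bigoplus_{\lambda} M_{i,\lambda}\otimes \mathcal X(\lambda)$ and taking unions of the multiplicity spaces. The only remaining point is to match notation: that $\bigcup_{i \in \mathbb N}\prodF^C F^iM_n$ is precisely what Definition \ref{restrcatdef} calls $\prodF^{C,r}M_n$, which is immediate from that definition once we observe $M_n = \bigcup_{i}F^iM_n \in \IND(\Repb_{p_n}(S_{\nu_n}))$ carries the stated filtration (here one uses the Example noting $\Repb_{p_n}(S_{\nu_n}) = \IND(\Repb^f_{p_n}(S_{\nu_n}))$).

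The step I expect to be the only mildly delicate one is the claim that an injection of finite-length objects in $\Repb_{p_n}(S_{\nu_n})$ passes to an injection under restricted ultraproduct \emph{inside} $\Rep(S_\nu)$ rather than merely in $\widehat{\mathcal C}$; one has to be slightly careful that "monomorphism" is a property preserved by \lthm{} (it is, being the vanishing of a kernel, a first-order condition on the map together with its source and target) and that fullness of $\Rep(S_\nu) \subset \widehat{\mathcal C}$ then transports it down. Everything else is bookkeeping: unwinding Definition \ref{restrdef}/\ref{restrcatdef}, checking the colimit is compatible with the direct-sum decomposition into the $M_{i,\lambda}\otimes\mathcal X(\lambda)$, and citing Construction \ref{indcons} and Remark \ref{remindmor} for the morphism spaces. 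I would also add a sentence pointing to the footnote's alternative viewpoint — that one could instead form $\prodF^C\bigl(\bigcup_i F^iM_n\bigr)$ directly inside $\prodF\Repb_{p_n}(S_{\nu_n})$ — and note that comparing the two descriptions reduces, again, to the observation that ultraproduct commutes with the relevant filtered colimits, but that this is not needed for the statement as phrased.
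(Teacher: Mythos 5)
Your proof is correct and follows essentially the same route as the paper: the paper's own proof is a one-line citation of Construction \ref{indconsult}, and your argument simply unwinds that construction (checking via \lthm{} and fullness that the inclusions survive the ultraproduct, invoking semisimplicity of $\Rep(S_\nu)$ for Construction \ref{indcons}, and matching with Definition \ref{restrcatdef}). The extra detail you supply is harmless and accurate, not a different method.
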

\begin{proof}
This follows from Construction \ref{indconsult}.
\end{proof}

\begin{rem}
Note that, using Remark \ref{remindmor}, we conclude that if $M \in \IND(\Rep(S_\nu))$ has finite length, then for any $N \in \IND(\Rep(S_\nu))$ constructed via Proposition \ref{inddelcons}, we have:
$$
\Hom_{\IND(\Rep(S_\nu))}(M,N) = \bigcup_{j \in \mathbb N}\Hom_{\Rep(S_\nu)}(M,F^jN)=\bigcup_{j \in \mathbb N}\prodF\Hom_{\Repb_{p_n}(S_{\nu_n})}(M_{n},F^jN_{n})=
$$
$$
= {\prodF^r}\Hom_{\Repb_{p_n}(S_{\nu_n})}(M_{n},N_{n})  ,
$$
with the filtration arising from the filtration on $N$. 
\end{rem}

\subsection{Unital vector spaces and complex tensor powers}

Below we will use the notion of a  unital vector space. For details see \cite{etingof2014representation}.

\begin{def0}
A unital vector space $V$ is a vector space together with a unit, i.e., a distinguished non-zero vector denoted by $1 \in V$.
\end{def0}

In \cite{etingof2014representation} it is shown that given a finite dimensional unital vector space $V$, one can functorially define an ind-object $V^{\otimes \nu}\in \Rep(S_\nu)$. The idea behind this is that, although there is no way to algebraically define what $x^\nu$ is, there is, on the other hand, a way to define what $(1+x)^\nu$ is. Namely, $(1+x)^\nu:=\sum_{m\ge 0}\binom{\nu}{m}x^m$. 

We can also construct this object via an ultraproduct. Anyone not familiar with \cite{etingof2014representation} might regard this as definition for the purposes of this paper.

\begin{prop}\label{compow}
For a finite dimensional unital vector space $V$, the ind-object $V^{\otimes \nu}$ is given by:
$$
V^{\otimes \nu} = \prodF^{C,r} V^{\otimes \nu_n}. 
$$
\end{prop}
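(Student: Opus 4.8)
The plan is to identify both sides of the claimed equality as ind-objects built from a compatible filtration, and then match them level by level using \lthm. First I would recall the explicit description of $V^{\otimes\nu}$ from \cite{etingof2014representation}: writing $V = \Bbbk\cdot 1 \oplus \bar V$ with $\bar V$ a complement to the unit, the object $V^{\otimes\nu}$ carries the filtration whose $m$-th piece $F^m(V^{\otimes\nu})$ is the image of $\bar V^{\otimes m}$ under a map built from the ``$(1+x)^\nu$'' combinatorics, and as an object of $\Rep(S_\nu)$ one has $F^m(V^{\otimes\nu})/F^{m-1}(V^{\otimes\nu}) \cong \bar V^{\otimes m}\otimes\mathfrak h^{\otimes m}/(\dots)$-type summands; in any case $F^m(V^{\otimes\nu})$ is a genuine (finite-length) object of $\Rep(S_\nu)$, and $V^{\otimes\nu} = \bigcup_{m}F^m(V^{\otimes\nu})$ in $\IND(\Rep(S_\nu))$. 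On the finite-rank side, $V^{\otimes\nu_n}$ is just the ordinary $\nu_n$-th tensor power, and it carries the analogous filtration $F^m(V^{\otimes\nu_n})$ spanned by tensors with at most $m$ ``non-unit'' legs — more precisely the span of the $S_{\nu_n}$-orbit of vectors of the form $1\otimes\cdots\otimes 1\otimes v_1\otimes\cdots\otimes v_m$ with $v_i\in\bar V$. This is a subrepresentation, it is finite-dimensional with dimension depending only on $m$, $\dim V$ and $\nu_n$ in a way that stabilizes (as a representation-theoretic decomposition into $X_{p_n}(\mu|_{\nu_n})$'s), and $\bigcup_m F^m(V^{\otimes\nu_n}) = V^{\otimes\nu_n}$.

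Next I would invoke Proposition \ref{inddelcons}: to conclude that $\prodF^{C,r}V^{\otimes\nu_n}$ is a well-defined object of $\IND(\Rep(S_\nu))$, it suffices to check that $\prodF^C F^m(V^{\otimes\nu_n}) \in \Rep(S_\nu)$ for each fixed $m$, which follows from Proposition \ref{simplerultrthm} once we know the number of simple summands of $F^m(V^{\otimes\nu_n})$ and the quantity $\max(|\lambda| - \lambda_1)$ over those summands are bounded in $n$ — both are immediate from the fact that $F^m(V^{\otimes\nu_n})$ is a quotient of $\bar V^{\otimes m}\otimes(\text{permutation module on }m\text{-subsets of }\{1,\dots,\nu_n\})$, whose branching is governed by Young diagrams obtained from those of size $\le m$ by the $\cdot|_{\nu_n}$ operation. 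The core of the argument is then the level-by-level identification
$$
\prodF^C F^m(V^{\otimes\nu_n}) \;\cong\; F^m(V^{\otimes\nu}) \quad\text{in }\Rep(S_\nu),
$$
compatibly with the inclusions as $m$ varies. This is where \lthm\ does the work: the map $\bar V^{\otimes m}\otimes(\text{$m$-subsets}) \to V^{\otimes\nu_n}$ and its kernel are defined by first-order data uniform in $n$ (the defining relations of $(1+x)^\nu$ truncate to finitely many relations once we restrict to $\le m$ non-unit legs, and these have coefficients that are polynomial in $\nu_n$), so the ultraproduct of these maps is exactly the corresponding map in $\Rep(S_\nu)$ with $\nu$ in place of $\nu_n$ — which by construction \cite{etingof2014representation} is the map cutting out $F^m(V^{\otimes\nu})$. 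Taking the union over $m$ and using the compatibility of the restricted-ultraproduct union with the $\IND$ colimit (Construction \ref{indconsult}, Definition \ref{restrcatdef}) gives $\prodF^{C,r}V^{\otimes\nu_n} = \bigcup_m \prodF^C F^m(V^{\otimes\nu_n}) = \bigcup_m F^m(V^{\otimes\nu}) = V^{\otimes\nu}$.

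The main obstacle, and the step deserving the most care, is making precise the claim that the defining relations of $V^{\otimes\nu}$ are ``first-order and uniform in $n$'' after truncation to each filtration level. Concretely, I would need to pin down a finite presentation of $F^m(V^{\otimes\nu})$ — generators being the image of $\bar V^{\otimes m}\otimes\mathbb{C}[\text{$m$-element subsets}]$, relations expressing the binomial identities $\binom{\nu}{m} = \binom{\nu-1}{m}+\binom{\nu-1}{m-1}$ and the symmetrization/idempotent relations — and verify that each such relation, read in $\Repb_{p_n}(S_{\nu_n})$, holds with $\nu_n$ substituted for $\nu$ and the coefficients reducing mod $p_n$ correctly (this is where one uses that the $\nu_n, p_n$ are chosen as in Theorem \ref{ultdelthm}(b) so that $\prodF\nu_n=\nu$). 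Once the finitely many relations at level $m$ are matched, \lthm\ upgrades the isomorphism of presentations to an isomorphism of objects; the passage $m\to\infty$ is then formal. An alternative, possibly cleaner route that avoids writing out relations explicitly: use the universal property (Proposition \ref{unprop}) — the unital structure on $V$ makes $V\otimes(\text{Frobenius algebra})$-type constructions functorial — together with the observation from the footnote to Proposition \ref{inddelcons} that taking the ultraproduct of the $M_n = V^{\otimes\nu_n}$ directly in $\prodF\Repb_{p_n}(S_{\nu_n})$ yields the same object as the restricted ultraproduct; one then only needs that this direct ultraproduct agrees with the $V^{\otimes\nu}$ of \cite{etingof2014representation} on each finite piece, which again reduces to the level-$m$ comparison.
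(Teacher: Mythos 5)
Your overall skeleton (filter both sides compatibly, compare level by level, pass to the union via Construction \ref{indconsult}) matches the paper, and your choice of filtration by the number of non-unit tensor legs is legitimate — the paper explicitly remarks at the end of its proof that this filtration is a sub-filtration of the one it uses and yields the same restricted ultraproduct. The problem is that the one step carrying all the content, the identification $\prodF^C F^m(V^{\otimes\nu_n})\cong F^m(V^{\otimes\nu})$, is exactly the step you leave unproved. You propose to get it from \lthm{} applied to a ``finite presentation'' of $F^m(V^{\otimes\nu})$ by generators and relations (binomial identities, symmetrization relations), but no such presentation is written down, and the object $V^{\otimes\nu}$ of \cite{etingof2014representation} is not given by a presentation at all: it is given by the explicit isotypic decomposition $V^{\otimes\nu}=\bigoplus_{\lambda}S^{\lambda,\infty}V\otimes\mathcal X(\lambda)$. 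To make your Łoś argument work you would first have to produce and verify such a presentation uniformly in $n$, which amounts to re-deriving that decomposition; as written the step is a placeholder, not a proof. Your ``alternative route'' via Proposition \ref{unprop} does not apply either: that universal property concerns commutative Frobenius algebras of dimension $\nu$, whereas $V$ here is only a unital vector space, so $V^{\otimes\nu}$ is not produced by that functoriality.

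The missing idea, which is how the paper closes the gap, is to use the decomposition in finite rank, $V^{\otimes\nu_n}=\bigoplus_{\lambda}S^{\lambda|_{\nu_n}}V\otimes X_{p_n}(\lambda|_{\nu_n})$, filter by $|\lambda|\le i$, and observe two facts: the multiplicity spaces $S^{\lambda|_{\nu_n}}V$ stabilize (for fixed $\lambda$ and large $n$ they are a fixed finite-dimensional space $S^{\lambda,\infty}V$, so their ultraproduct is $S^{\lambda,\infty}V$ extended to the ultraproduct field), and $\prodF^C X_{p_n}(\lambda|_{\nu_n})=\mathcal X(\lambda)$ by Proposition \ref{simpprop}. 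The restricted ultraproduct then computes term by term:
$$
\prodF^{C,r}V^{\otimes\nu_n}=\bigcup_i\bigoplus_{|\lambda|\le i}\Bigl(\prodF S^{\lambda|_{\nu_n}}V\Bigr)\otimes\mathcal X(\lambda)=\bigoplus_{\lambda}S^{\lambda,\infty}V\otimes\mathcal X(\lambda)=V^{\otimes\nu},
$$
with no first-order bookkeeping needed; your filtration is then handled by the comparison remark after Definition \ref{restrcatdef}. Without this (or some equally concrete substitute for your presentation-matching step), the argument does not yet establish the proposition.
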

\begin{proof}
Using the notation of \cite{etingof2014representation}, we have:
$$
V^{\otimes \nu_n} = \bigoplus_{\lambda}S^{\lambda|_{\nu_n}}V \otimes X(\lambda|_{\nu_n}) ,
$$
where $S^{\lambda|_{\nu_n}}$ are the corresponding Schur functors. Thus we can define a filtration on each $V^{\otimes {\nu_n}}$ as 
$$
F^iV^{\otimes {\nu_n}} = \bigoplus_{|\lambda|\le i} S^{\lambda|_{\nu_n}}V \otimes X(\lambda|_{\nu_n}).
$$
Thus, taking the restricted ultraproduct with respect to this filtration, we obtain
$$
\prodF^{C,r} V^{\otimes \nu_n} = \bigcup_i \bigoplus_{|\lambda|\le i} \left( \prodF S^{\lambda|_{\nu_n}}V\right) \otimes \mathcal X(\lambda) = \bigoplus_{\lambda} S^{\lambda,\infty}V \otimes \mathcal X(\lambda)=V^{\otimes \nu},
$$
as needed. 

Note that we could have also used the filtration on $V^{\otimes \nu_n}$ induced by the filtration on $V$ given by $F^0V = \Bbbk \cdot 1$ and $F^1V=V$. I.e. the filtration, there the $i$-th term is spanned by all tensor monomials with no more than $i$ elements in the product that are not equal to $1$. Indeed this filtration is a sub-filtration of the filtration used above in the proof.

\end{proof}

\subsubsection{Symmetric powers of a unital algebra}\label{symalgsec}

In this section we will discuss a related construction in the case of the unital algebra. Here we will be concerned not with the tensor, but with symmetric powers of the unital vector space. Since the space of invariants of $V^{\otimes \nu}$ is an actual vector space, these objects will be usual vector spaces and not the objects of the Deligne category.

We will discuss the following class of algebras:
\begin{def0}
Consider $A$ -- a unital algebra. We will consider this algebra as a unital vector space with a unit given by the unit of the algebra. We call $A$ a filtered unital algebra if there is an ascending $\mathbb Z_{\ge 0}$-filtration by finite-dimensional subspaces such that $ \Bbbk \cdot 1\subset F^0A$. We will also suppose that such an algebra has a fixed vector space decomposition $A = \Bbbk \cdot 1 \oplus A'$.
\end{def0}

To make things clearer we will start with considering everything for transcendental $\nu$. I.e. we have $\Bbbk = \FQ$.

We would like to consider symmetric powers of a filtered unital algebra. I.e. we want to study the structure of $S^n(A)$. First of all note that this algebra admits a bifiltration.
\begin{def0} \label{SnAdef}
For a filtered unital algebra $A$, introduce a standard bifiltration of the algebra $S^n(A)$ in the following way. Consider $S^n(A)$ as $(A^{\otimes n})^{S_n}$. Introduce a bifiltration on $A^{\otimes n}$ via the following formulas for horizontal and vertical degrees:
$$
\deg_{h}(a_1 \otimes a_2 \otimes \dots \otimes a_n) = |\{i|a_i \notin  \Bbbk \cdot 1\}| \ ,
$$

$$
\deg_{v}(a_1 \otimes a_2 \otimes \dots \otimes a_n) = \sum_i\deg(a_i) \ .
$$
It is easy to see that this bifiltration restricts on the space of invariants of $S_n$.
\end{def0}
Now we can prove the following Proposition.
\begin{prop}
The associated graded algebra of the symmetric power $S^n(A)$ with respect to the horizontal filtration ${\rm gr}_h(S^n(A))$ is isomorphic to $\bigoplus_{i=0}^nS^i(A')$ as a vector space.
\end{prop}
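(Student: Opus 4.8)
The plan is to filter $A^{\otimes n}$ by horizontal degree as in Definition \ref{SnAdef} and analyze the associated graded. Recall $\deg_h(a_1 \otimes \cdots \otimes a_n)$ counts how many tensor factors lie outside $\Bbbk \cdot 1$. Using the fixed decomposition $A = \Bbbk \cdot 1 \oplus A'$, a basis of $A^{\otimes n}$ is obtained by choosing for each factor either $1$ or a basis vector of $A'$. Monomials with exactly $i$ factors in $A'$ span a subspace that, modulo lower horizontal degree, is canonically identified with $\bigoplus_{\text{subsets } T \subset \{1,\dots,n\}, |T|=i} (A')^{\otimes T}$ — i.e. $F_h^i(A^{\otimes n}) / F_h^{i-1}(A^{\otimes n}) \cong \bigoplus_{|T| = i} (A')^{\otimes T}$, with the remaining $n-i$ slots occupied by $1$'s. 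Hence ${\rm gr}_h(A^{\otimes n}) \cong \bigoplus_{i=0}^n \bigoplus_{|T|=i} (A')^{\otimes T}$.

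Next I would take $S_n$-invariants. The horizontal filtration is $S_n$-stable (permuting factors does not change how many of them lie in $A'$), so ${\rm gr}_h(S^n(A)) = {\rm gr}_h\big((A^{\otimes n})^{S_n}\big) = \big({\rm gr}_h(A^{\otimes n})\big)^{S_n}$, where the last equality uses that taking invariants under a finite group in characteristic $0$ is exact and commutes with passing to the associated graded of a $G$-stable filtration. Now $S_n$ acts on $\bigoplus_{|T|=i}(A')^{\otimes T}$ by permuting the index sets $T$ and simultaneously permuting tensor factors; the stabilizer of a fixed $T$ of size $i$ is $S_i \times S_{n-i}$ acting on $(A')^{\otimes i}$ through the $S_i$-factor. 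By the standard description of invariants of an induced/permutation module, $\big(\bigoplus_{|T|=i}(A')^{\otimes T}\big)^{S_n} \cong \big((A')^{\otimes i}\big)^{S_i} = S^i(A')$. Summing over $i$ gives ${\rm gr}_h(S^n(A)) \cong \bigoplus_{i=0}^n S^i(A')$ as vector spaces, as claimed.

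The one point requiring a little care — and the main (mild) obstacle — is the interchange of "associated graded" and "$S_n$-invariants." This is where the finite-dimensionality of the filtration components and $\chct(\Bbbk) = 0$ are used: the averaging idempotent $\frac{1}{n!}\sum_{g \in S_n} g$ gives an $S_n$-equivariant projection $A^{\otimes n} \twoheadrightarrow S^n(A)$ that respects the horizontal filtration, so it induces a surjection ${\rm gr}_h(A^{\otimes n}) \twoheadrightarrow {\rm gr}_h(S^n(A))$ whose image is exactly $({\rm gr}_h A^{\otimes n})^{S_n}$; injectivity on that image follows because the idempotent is the identity on invariants at each filtration level. Everything else is a bookkeeping computation with the chosen basis, which I would not spell out in full. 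Note the statement is only an isomorphism of vector spaces, so no compatibility with the algebra structure needs to be checked (indeed ${\rm gr}_h$ of the product will mix the $S^i(A')$ summands, which is presumably the subject of a subsequent result).
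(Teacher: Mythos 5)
Your proposal is correct and follows essentially the same route as the paper: use the splitting $A = \Bbbk\cdot 1 \oplus A'$ to decompose the degree-$i$ graded piece into monomials with exactly $i$ factors in $A'$ (the paper phrases this via shuffles, you via an induced permutation module), and identify its $S_n$-invariants with $S^i(A')$ through the stabilizer $S_i\times S_{n-i}$. Your explicit justification of the interchange of ${\rm gr}_h$ with $S_n$-invariants via the averaging idempotent is a point the paper leaves implicit, but it is the same argument in substance.
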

\begin{proof}

Taking the associated graded with respect to the horizontal filtration allows us to use the standard splitting $A = \Bbbk \cdot 1 \oplus A'$. This allows us to view $A^{\otimes n}$ as $(\Bbbk \cdot 1 \oplus A')^{\otimes n}$. I.e. we have a  decomposition of ${\rm gr}_h(S^n(A))$ into a direct sum ${\rm gr}_h(S^n(A)) = \bigoplus_{i=0}^n {\rm gr}_h(S^k(A))_i$, where ${\rm gr}_h(S^n(A))_i$ consists of symmetric tensors, the tensor monomials of which have exactly $i$ components in $A'$ and the rest $n-i$ components are scalars $1$. I.e. we have:
$$
{\rm gr}_h(S^n(A))_i = [\bigoplus_{\sigma \in Sh(i,n-i)} C(\sigma(1))\otimes C(\sigma(2)) \otimes \dots \otimes C(\sigma(n))]^{S_n} \ ,
$$
where $C(1) = \dots = C(i) = A'$, $C(i+1) = \dots = C(n) = \Bbbk \cdot 1$ and $Sh(i,n-i)$ is the group of shuffles of $i$ and $n-i$. Hence:
$$
{\rm gr}_h(S^n(A))_i \simeq [A'^{\otimes i}]^S_i \otimes (\Bbbk \cdot 1)^{\otimes (n-i)} \simeq S^i(A') \ ,
$$
under the symmetrizing isomorphism. Hence we conclude that:
$$
{\rm gr}_h(S^n(A)) \simeq \bigoplus_{i=0}^n S^i(A') \ .
$$

Notice that the horizontal grading on the l.h.s. translates exactly into the grading by the degree of symmetric power on the r.h.s and the vertical filtration on l.h.s. translates into the filtration by the sum of degrees with respect to $A$ of the elements of the term in the symmetric product.
\end{proof}

Now we would like to consider an ultraproduct of such algebras:
\begin{def0}
For a filtered unital algebra $A$ over $\Bbbk =\FQ$, define $S^\nu(A)$ to be equal to an algebra $\prodF^r S^n(A)$ over $\mathbb C$, where the restricted ultraproduct is taken with respect to the total filtration of the bifiltered algebras. 
\end{def0}

Obviously this algebra inherits a bifiltration from $S^n(A)$. Thus we can consider ${\rm gr}_h(S^\nu(A))$. We can calculate this algebra with the help of the following Proposition.

\begin{prop}
We have a bifiltered vector space isomorphism between  
$$
{\rm gr}_h(S^\nu(A)) \simeq S^{\bullet}(A') \otimes_{\FQ} \mathbb C \ .
$$
\end{prop}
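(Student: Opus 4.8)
The plan is to combine the previous two propositions with a Łoś-theorem argument. We already know that for each finite $n$ there is a vector-space (in fact bifiltered) isomorphism $\gr_h(S^n(A)) \simeq \bigoplus_{i=0}^n S^i(A')$, and we know that $S^\nu(A)$ is by definition the restricted ultraproduct $\prodF^r S^n(A)$ with respect to the total filtration. So the first step is to check that taking the horizontal associated graded commutes with the restricted ultraproduct: since the bifiltration on $S^n(A)$ has finite-dimensional components stabilizing in $n$ for fixed bidegree (this is where the finite-dimensionality of the $F^iA$ is used), we get $\gr_h(\prodF^r S^n(A)) \simeq \prodF^r \gr_h(S^n(A))$ as bifiltered vector spaces over $\mathbb C$, the restricted ultraproduct on the right taken with respect to the total filtration inherited from the vertical and horizontal filtrations.

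Next I would feed in the finite-$n$ computation. Under the isomorphism $\gr_h(S^n(A)) \simeq \bigoplus_{i=0}^n S^i(A')$, the horizontal grading corresponds to the symmetric-power degree $i$ and the vertical filtration corresponds to the filtration on $S^i(A')$ by total $A$-degree. Each graded piece $S^i(A')$ is a fixed finite-filtered vector space over $\FQ$ not depending on $n$ (as long as $n \ge i$, which holds \fan for each fixed $i$). Hence the restricted ultraproduct, which by definition is $\bigcup_{i,j} \prodF F^{i,j}$, picks out in horizontal degree $i$ exactly $\prodF (S^i(A') \text{ over } \FQ)$. Since $S^i(A')$ over $\FQ$ has each vertical-filtration component finite-dimensional and independent of $n$, the ultraproduct of the constant sequence is just $S^i(A') \otimes_{\FQ} \mathbb C$ — this is the elementary fact (cf. the examples on restricted ultraproducts of a constant sequence of filtered vector spaces in the excerpt) that $\prodF^r V = V \otimes_{\FQ} \mathbb C$ for a countable-dimensional filtered $\FQ$-vector space with finite-dimensional components. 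Summing over $i$ and respecting the gradings gives $\gr_h(S^\nu(A)) \simeq \bigoplus_{i \ge 0} S^i(A') \otimes_{\FQ} \mathbb C = S^\bullet(A') \otimes_{\FQ} \mathbb C$, with the horizontal grading matching the symmetric-power grading and the vertical filtration matching the $A$-degree filtration, exactly as claimed.

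The main obstacle I expect is bookkeeping about which filtration the restricted ultraproduct is taken with respect to, and verifying that $\gr_h$ genuinely commutes with $\prodF^r$. The subtlety is that $S^n(A)$ is bifiltered but $S^\nu(A)$ was defined as the restricted ultraproduct along the total filtration; one has to argue (invoking the remark in the excerpt that the restricted ultraproduct of a bifiltered sequence along the total filtration agrees with the one built from the horizontal and vertical filtrations, together with the remark that restricted ultraproducts are insensitive to cofinal changes of filtration) that the bifiltered structure passes to the limit and that associated graded of a union of filtration pieces is the union of the associated gradeds. Once that compatibility is in place, the rest is the two cited propositions plus Łoś's theorem applied degree by degree, and the identification $\prodF^r V_n = V \otimes_{\FQ}\mathbb C$ for the eventually-constant sequence $V_n = \bigoplus_{i \le n} S^i(A')$.
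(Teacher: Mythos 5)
Your proposal is correct and follows essentially the same route as the paper's own proof: commute ${\rm gr}_h$ with the restricted ultraproduct, decompose into horizontal-degree pieces, identify each piece ${\rm gr}_h(S^n(A))_i$ with the $n$-independent filtered space $S^i(A')$ for $n>i$, and use that the restricted ultraproduct of this constant sequence is $S^i(A')\otimes_{\FQ}\mathbb C$. The extra bookkeeping you flag about which filtration the restricted ultraproduct uses is handled in the paper by the remarks on total versus bifiltered restricted ultraproducts, exactly as you suggest.
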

\begin{proof}
Indeed we have:
$$
{\rm gr}_h(S^\nu(A)) = \prodF^r {\rm gr}_h(S^n(A)) \simeq \bigoplus_{i=1}^{\infty} \prodF^r {\rm gr}_h(S^n(A))_i \ ,
$$
where the last restricted ultraproduct is taken with the respect to the filtration on ${\rm gr}_h(S^n(A))_i$ induced by the vertical filtration.

Now for each $n>i$, ${\rm gr}_h(S^n(A))_i$ has a filtered isomorphism with the same vector space $S^i(A')$. Hence $\prodF^r{\rm gr}_h(S^n(A))_i = S^i(A') \otimes_{\FQ}\mathbb C$. Thus we conclude:
$$
{\rm gr}_h(S^\nu(A)) \simeq S^{\bullet}(A') \otimes_{\FQ}\mathbb C \ .
$$
\end{proof}

To characterize this algebra more precisely we need to construct a certain map from $A$ to each $S^n(A)$.

\begin{prop} \label{deltaprop}
There is a map of Lie algebras $\delta_n: A \to S^n(A)$ (where the structure of the Lie algebra on both sides is given by the commutator) that sends $a \in A$ to $\sum_{i=1}^n a_i$, where $a_i = 1 \otimes \dots \otimes 1 \otimes a \otimes 1 \otimes \dots \otimes 1$, where $a$ is on the $i$-th place. This gives rise to an algebra map $\delta: A \otimes_{\FQ} \mathbb C\to S^\nu(A)$ that sends $1_A \mapsto \nu\cdot 1_{\mathcal A}$.
\end{prop}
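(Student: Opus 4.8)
The plan is to build the map $\delta_n$ at the level of finite symmetric powers, check the required compatibilities there, and then pass to the restricted ultraproduct. First I would verify that $\delta_n \colon A \to S^n(A)$, $a \mapsto \sum_{i=1}^n a_i$, is indeed a Lie algebra homomorphism: since $a_i$ and $b_j$ commute in $A^{\otimes n}$ for $i \ne j$ (they act on different tensor factors), we get $[\delta_n(a),\delta_n(b)] = \sum_{i,j}[a_i,b_j] = \sum_i [a,b]_i = \delta_n([a,b])$, and the image lands in the $S_n$-invariants because $\sum_i a_i$ is manifestly symmetric. I would also record that $\delta_n(1_A) = n \cdot 1_{S^n(A)}$, which is where the factor $\nu$ will come from.

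Next I would check that $\delta_n$ is compatible with the bifiltrations in the way needed to take the restricted ultraproduct. Writing $a = c\cdot 1 + a'$ with $a' \in A'$ and $\deg(a') \le d$, each summand $a_i$ has horizontal degree $\le 1$ and vertical degree $\le d$ (the scalar part contributing $0$), so $\delta_n(a) \in F^{1,d}S^n(A)$, uniformly in $n$. Hence the maps $\delta_n$ assemble into a map of the restricted ultraproducts, using that $A \otimes_{\FQ}\mathbb C = \prodF^r A$ (the sequence of copies of $A$ with its given finite-dimensional filtration, whose restricted ultraproduct is $A$ base-changed to $\mathbb C$, exactly as in the Example following Definition \ref{restrdef}) and that $S^\nu(A) = \prodF^r S^n(A)$ by definition. Łoś's theorem then transports the Lie-homomorphism identity $[\delta_n(a),\delta_n(b)] = \delta_n([a,b])$ to the limit, so $\delta \coloneqq \prodF^r \delta_n \colon A\otimes_{\FQ}\mathbb C \to S^\nu(A)$ is a Lie algebra map for the commutator brackets. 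Finally, $\delta(1_A) = \prodF \delta_n(1_A) = \prodF (n \cdot 1_{S^n(A)}) = \left(\prodF n\right)\cdot 1_{S^\nu(A)} = \nu \cdot 1_{\mathcal A}$, using $\prodF n = \nu$ from Example \ref{fieldextrans}.

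It remains to upgrade ``Lie algebra map'' to ``algebra map'', i.e.\ to promote the statement to the enveloping-algebra level: the claim as phrased says $\delta$ extends to an algebra map $A \otimes_{\FQ}\mathbb C \to S^\nu(A)$, which I read as a map out of $U(A_{\mathrm{Lie}})$ (or rather the statement should be understood with the source being generated multiplicatively from $A$ and $1$ inside $S^\nu(A)$); in any case, since $S^\nu(A)$ is an associative algebra and $\delta$ is a Lie map sending $1_A$ to the scalar $\nu$, the universal property of the universal enveloping algebra gives the extension, and one checks $F^{i,j}$-compatibility of this extension by noting that a product of $k$ elements of $\delta(A)$ lies in $F^{k,\bullet}$, again uniformly in $n$, so the extension also commutes with the restricted ultraproduct.

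The main obstacle I anticipate is purely bookkeeping around the filtrations: one must make sure that the bound $\delta_n(a) \in F^{1,\deg(a)}S^n(A)$ holds with constants independent of $n$, so that the family $(\delta_n)$ actually descends to a map of restricted ultraproducts rather than merely to a map of the unrestricted ones — this is the only place where something could go wrong, and it is handled by the observation above that each $a_i$ occupies a single tensor slot. Everything else (the Lie identity, the value on $1_A$) is a one-line computation in $A^{\otimes n}$ followed by a direct invocation of Łoś's theorem.
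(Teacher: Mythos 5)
Your proposal is correct and follows essentially the same route as the paper: check that each $\delta_n$ is a Lie map respecting the bifiltration (horizontal degree $\le 1$, vertical degree $\le \deg a$, uniformly in $n$), pass to the restricted ultraproduct, and compute $\delta(1_A)=\prodF n\cdot 1=\nu\cdot 1_{\mathcal A}$. Your closing remark about reading ``algebra map'' through the universal enveloping algebra is also consistent with how the paper itself uses $\delta$ (it introduces $\Delta_k\colon U(A)\to S^k(A)$ immediately afterwards), so nothing is missing.
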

\begin{proof}
Indeed $\delta_n$ is a well-defined map and it's a standard fact that it indeed gives us a map of Lie algebras. This map also respects the bifiltration if we consider the  horizontal filtration of $A$ to be given by $F_h^0A=\Bbbk \cdot 1$ and $F^1_hA = A$ and use the usual filtration on $A$ as the vertical one. Hence, taking an ultraproduct $\prodF \delta_n$ we obtain a well-defined map $\delta: A\otimes_{\FQ} \mathbb C \to S^{\nu}(A)$.

Now notice that under this map $\delta_n(1) = n \cdot 1 \otimes 1 \otimes \dots \otimes 1$.  The element $1 \otimes \dots \otimes 1 \in S^n(A)$ is the unity of this algebra. Thus $\prodF 1 \otimes \dots \otimes 1$ is the unity of $S^{\nu}(A)$. So, we conclude that:
$$
\delta(1_A) = \prodF \delta_n(1_A) = \prodF n \prodF 1 \otimes \dots \otimes 1 = \nu \cdot 1_{\mathcal A} \ .
$$
\end{proof}

This map allows us to define a map from $U(A)$: 
\begin{def0} \label{Deltadef}
Denote by $\Delta_k$ a map from the universal enveloping algebra $U(A)$ to $S^k(A)$ arising from the map $\delta_k$. 
\end{def0}

Now note that there is a bifiltration on $U(A)$ which comes from the bifiltration on $T^{\bullet}(A)$ arising from the bifiltration on $A$ and given by the same formulas as in Definition \ref{SnAdef}. With this filtration each $\Delta_k$ is a bifiltered morphism. This allows us to take their ultraproduct:
\begin{lemma}
The ultraproduct $\Delta = \prodF \Delta_k$ is a well defined bifiltered morphism from $U(A) \otimes_{\FQ} \mathbb C $ to $S^{\nu}(A)$.
\end{lemma}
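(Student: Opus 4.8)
The plan is to verify that each $\Delta_k\colon U(A)\to S^k(A)$ is a bifiltered morphism with respect to the bifiltration on $U(A)$ induced from $T^\bullet(A)$, and then observe that the bifiltration components stabilize as $k\to\infty$, so that the ultraproduct is well defined in the sense of the restricted ultraproduct of bifiltered algebras. First I would recall the setup: $U(A)$ is a quotient of $T^\bullet(A)$, and the bifiltration on $T^\bullet(A)$ is given (as in Definition \ref{SnAdef}) by $\deg_h(a_1\otimes\cdots\otimes a_m) = |\{i : a_i\notin\Bbbk\cdot 1\}|$ and $\deg_v(a_1\otimes\cdots\otimes a_m)=\sum_i\deg(a_i)$; this descends to $U(A)$ because the relations $a\otimes b - b\otimes a - [a,b]$ are bihomogeneous-compatible (the commutator $[a,b]$ has horizontal degree $\le 1$ when $a,b$ have horizontal degree $\le 1$, since $[1,b]=0$, and similarly controls the vertical degree via the filtration on $A$). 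The map $\delta_k$ sends $a\mapsto\sum_i a_i$, each summand of which has $\deg_h = 1$ (if $a\in A'$) or $\deg_h=0$ (if $a\in\Bbbk\cdot1$, where it equals $k$ copies of $1$, which is the identity of $S^k(A)$) and $\deg_v=\deg(a)$; hence on products of generators the induced map $\Delta_k$ sends $F^{i,j}U(A)$ into $F^{i,j}S^k(A)$, i.e., it is bifiltered.

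Next I would check that for fixed $(i,j)$ the dimension of $F^{i,j}S^k(A)$ stabilizes for $k\gg 0$ (and likewise for $F^{i,j}U(A)$, which does not depend on $k$ at all), and that the maps $\Delta_k$ restricted to $F^{i,j}U(A)$ are "eventually independent of $k$" in the way required to form the ultraproduct: concretely, the image $\Delta_k(F^{i,j}U(A))\subset F^{i,j}S^k(A)$ lands in the stable part, so $\prodF\Delta_k$ is well defined as a map between the restricted ultraproducts $\prodF^r F^{i,j}U(A)\otimes_{\FQ}\mathbb C$ and $\prodF^r F^{i,j}S^k(A) = F^{i,j}S^\nu(A)$. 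Taking the union over all $(i,j)$ gives the desired bifiltered morphism $\Delta\colon U(A)\otimes_{\FQ}\mathbb C\to S^\nu(A)$; its bifiltered-ness is immediate from the bifiltered-ness of each $\Delta_k$ together with \lthm (the statement "$\Delta_k$ maps $F^{i,j}$ into $F^{i,j}$" is first-order in the relevant data, so it transfers to the ultraproduct).

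The main obstacle I anticipate is not any single hard estimate but rather bookkeeping: one must be careful that the horizontal degree behaves correctly under the algebra structure of $S^k(A)=(A^{\otimes k})^{S_k}$, in particular that multiplying two symmetrized tensors with $i_1$ and $i_2$ non-unit slots produces something in horizontal degree $\le i_1+i_2$ (this uses that $1$ is the algebra unit in each tensor factor, so multiplying a $1$-slot by anything keeps it controlled by the other factor's degree), and that this is compatible with $\delta_k$ being a Lie algebra map — the commutator $[\delta_k(a),\delta_k(b)] = \delta_k([a,b])$ is exactly the identity that keeps the horizontal degree from growing beyond $1$ on $\delta_k(A')$. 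Once that compatibility is pinned down, the ultraproduct construction is a formal consequence of Definition \ref{restrdef}, the remark on restricted ultraproducts of bifiltered algebras, and \lthm, so no genuinely new idea is needed beyond what is already in place for $S^\nu(A)$ and $\delta$.
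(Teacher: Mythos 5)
Your argument is correct and is essentially the paper's (which in fact omits the proof as immediate): the bifiltration on $U(A)$ is the one induced from $T^{\bullet}(A)$, each $\Delta_k$ sends $F^{i,j}U(A)$ into $F^{i,j}S^k(A)$ uniformly in $k$, so for every fixed $u$ the sequence $(\Delta_k(u))_k$ is bounded in the bifiltration and hence defines an element of the restricted ultraproduct $S^{\nu}(A)$, with multiplicativity and bifiltered-ness transferring by \lthm. One small caveat: your side claim that the components $F^{i,j}U(A)$ behave like the stabilizing finite-dimensional pieces is off (e.g.\ all powers of $1_A$ lie in $F^{0,0}U(A)$, which is therefore infinite-dimensional), but this detour through a restricted ultraproduct on the domain side is unnecessary --- well-definedness of $\Delta$ on $U(A)\otimes_{\FQ}\mathbb C$ only requires the elementwise boundedness you already established.
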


Now we would like to prove that $\Delta$ is a surjective map.
\begin{lemma}  \label{Deltasur}
The map $\Delta$ is surjective.
\end{lemma}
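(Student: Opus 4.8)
The plan is to reduce surjectivity of $\Delta$ to the already-established structural results about the associated graded. Recall that $\Delta = \prod_{\mathcal F}\Delta_k$ is a bifiltered morphism $U(A)\otimes_{\FQ}\mathbb C \to S^\nu(A)$, and that we have computed ${\rm gr}_h(S^\nu(A)) \simeq S^\bullet(A')\otimes_{\FQ}\mathbb C$ as a bifiltered vector space. The standard strategy for a bifiltered (or even just filtered) map is: it suffices to show that the induced map on associated graded algebras is surjective, since $S^\nu(A)$ is exhausted by its filtration pieces and surjectivity lifts step by step from graded pieces to filtered pieces (here one should be a little careful and argue with the horizontal filtration, whose pieces $F_h^iS^\nu(A)$ are themselves complete/exhaustive in the relevant sense, or equivalently work degree by degree).

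First I would pin down what ${\rm gr}_h(\Delta)$ does. On the source, ${\rm gr}_h(U(A)\otimes_{\FQ}\mathbb C)$ with respect to the horizontal bifiltration (where $1_A$ has horizontal degree $0$ and all of $A'$ has horizontal degree $1$) is the symmetric algebra $S^\bullet(A')\otimes_{\FQ}\mathbb C$ — this is essentially PBW combined with the fact that modding out by the horizontal filtration kills the scalar part $\Bbbk\cdot 1$ and makes the Lie bracket lower the horizontal degree, hence become zero on the graded level. On the target, we showed ${\rm gr}_h(S^\nu(A))\simeq S^\bullet(A')\otimes_{\FQ}\mathbb C$. The key computation is then that ${\rm gr}_h(\Delta)\colon S^\bullet(A')\otimes_{\FQ}\mathbb C \to S^\bullet(A')\otimes_{\FQ}\mathbb C$ is (up to a nonzero scalar on each graded component, or some invertible rescaling) the identity. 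This I would check at finite rank: $\delta_n(a)=\sum_i a_i$ for $a\in A'$, and in ${\rm gr}_h(S^n(A))_1\simeq S^1(A')$ this is exactly $a$ (each $a_i$ contributes the same symmetric tensor after the symmetrizing isomorphism, but they all lie in the same class); more generally $\Delta_k(a^{(1)}\cdots a^{(m)})$ for $a^{(j)}\in A'$, when passed to ${\rm gr}_h(S^k(A))_m\simeq S^m(A')$, recovers the product $a^{(1)}\cdots a^{(m)}$ up to a combinatorial factor that is nonzero for $k\gg m$ (the cross terms where two indices coincide land in lower horizontal filtration). Taking $\prod_{\mathcal F}$ of these finite-rank statements via {\lthm} gives that ${\rm gr}_h(\Delta)$ is an isomorphism, in particular surjective.

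Once ${\rm gr}_h(\Delta)$ is surjective, I would conclude surjectivity of $\Delta$ by a routine induction on the horizontal filtration degree: given $x\in F_h^iS^\nu(A)$, choose a preimage mod $F_h^{i-1}$ using surjectivity of the $i$-th graded piece, subtract its image, and recurse, using that $F_h^{-1}=0$ so the induction terminates after finitely many steps for any fixed element (every element of $S^\nu(A)$ lies in some $F_h^iS^\nu(A)$ since $S^\nu(A)=\prod_{\mathcal F}^r S^n(A)$ is the union of its horizontal filtration pieces, each being a restricted ultraproduct of the finite-rank horizontal pieces).

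The main obstacle, I expect, is the bookkeeping in identifying ${\rm gr}_h(\Delta)$ precisely — specifically verifying that the combinatorial coefficients arising from $\delta_k(a^{(1)})\cdots\delta_k(a^{(m)}) = \bigl(\sum_i a^{(1)}_i\bigr)\cdots\bigl(\sum_i a^{(m)}_i\bigr)$, after discarding the terms with repeated indices (which drop the horizontal degree) and applying the symmetrizing isomorphism ${\rm gr}_h(S^k(A))_m\simeq S^m(A')$, are uniformly nonzero for almost all $k$, so that {\lthm} applies. This is a finite-rank combinatorial fact (it amounts to $k(k-1)\cdots(k-m+1)\neq 0$ type factors, which become the nonzero element $\nu(\nu-1)\cdots(\nu-m+1)$ in the ultraproduct since $\nu\notin\mathbb Z_{\ge 0}$, or simply a positive integer count that is visibly nonzero), but it needs to be stated carefully because it is exactly where the hypothesis $\nu\notin\mathbb Z_{\ge 0}$ or the $n\to\infty$ limit is doing work. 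Everything else is the standard "surjective on gr implies surjective" argument.
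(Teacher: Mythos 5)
Your argument is essentially the paper's: the paper also proves surjectivity by induction on the horizontal filtration, using exactly the computation that $\delta(a^{(1)})\delta(a^{(2)})\cdots\delta(a^{(m)})$ equals the corresponding symmetrized tensor plus terms of lower horizontal degree (the paper runs this induction at finite rank for each $\Delta_n$ and then takes the ultraproduct, while you run it directly on $S^\nu(A)$ after transferring the leading-term computation by \lthm; this repackaging is harmless).

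One side claim in your write-up is false, though it does not damage the surjectivity argument: ${\rm gr}_h(U(A)\otimes_{\FQ}\mathbb C)$ is \emph{not} $S^\bullet(A')\otimes_{\FQ}\mathbb C$, and ${\rm gr}_h(\Delta)$ is \emph{not} an isomorphism. The element $1_A$ has horizontal degree $0$ but is a genuine PBW generator of $U(A)$, so $\mathbb C[1_A]\subset F_h^0U(A)$ and ${\rm gr}_h(U(A))\simeq S^\bullet(A')\otimes \mathbb C[1_A]$; moreover ${\rm gr}_h(\Delta)$ kills $1_A-\nu$, which is a nonzero element of ${\rm gr}_h^0$. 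This is precisely why the paper only obtains an isomorphism after passing to $U(A)/(1_A-\nu)$ in Proposition \ref{Deltaprop}; for the present lemma you only need surjectivity of the graded map, and your computation does establish that. Also, your worry about $k$-dependent combinatorial factors is unnecessary: the distinct-index terms of $\delta_k(a^{(1)})\cdots\delta_k(a^{(m)})$ enumerate each shuffle exactly once, so the leading coefficient is independent of $k$ (it is $1$, or $m!$ depending on how the symmetrizing isomorphism is normalized), and no use of $\nu\notin\mathbb Z_{\ge 0}$ is needed at this point.
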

\begin{proof}
It's enough to prove that all $\Delta_n$ are surjective and so it is enough to prove that $S^n(A)$ is generated by the image of $\delta_n$. We will do so by induction on the degree of the horizontal filtration. 

Now $F^{1,\bullet}S^n(A)$ is precisely the image of $\delta_n$ so the base of induction is clear.

Suppose that $F^{i-1,\bullet}S^n(A)$ is generated by the image of $\delta_n$. Suppose $f \in F^{i,\bullet}S^n(A)$. Now using the isomorphism of ${\rm gr}_h(S^n(A))$ with $\sum_{j=0}^n S^j(A')$, we may assume that $f = \widetilde{f}+ g$, where $\widetilde{f} = \sum \widetilde{f}_l$ and each $\widetilde{f}_l = a^{(l)}_1\otimes a^{(l)}_2 \otimes \dots \otimes a^{(l)}_{n} \otimes 1 \otimes \dots \otimes 1 + \textrm{shuffles}$, where each $a^{(l)}_i \in A'$ and $g \in F^{i-1,\bullet}S^n(A)$.

But now 
$$
h_l = \delta(a^{(l)}_1)\delta(a^{(l)}_2) \dots \delta(a^{(l)}_{n}) =
$$
$$
=a^{(l)}_1\otimes a^{(l)}_2 \otimes \dots \otimes a^{(l)}_{n} \otimes 1 \otimes \dots \otimes 1 + \ \textrm{shuffles} \  + \  \textrm{lower order terms in horizontal filtration}\ .
$$
Hence $f - \sum h_l \in F^{i-1,\bullet}S^k(A)$ and we are done.
\end{proof}

Now since we know that $\Delta(1_A) = \nu \cdot 1_{\mathcal A}$ it follows that $1_A - \nu \in \ker(\Delta)$. 
\begin{prop} \label{Deltaprop}
The map $\widetilde{\Delta}:U(A)/(1_A - \nu)\otimes_{\FQ}\mathbb C \to S^{\nu}(A)$ is a filtered algebra isomorphism. 
\end{prop}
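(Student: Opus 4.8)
The plan is to show $\widetilde\Delta$ is a well-defined, surjective, filtered algebra homomorphism, and then to upgrade this to an isomorphism by comparing associated graded algebras with respect to the horizontal filtration. For well-definedness: $\Delta$ is an algebra homomorphism (each $\Delta_k$ is induced by the Lie algebra map $\delta_k$, hence is an algebra map $U(A)\to S^k(A)$, and an ultraproduct of algebra maps is an algebra map), and by Proposition \ref{deltaprop} $\Delta(1_A)$ is $\nu$ times the unit of $S^\nu(A)$, so the two-sided ideal generated by $1_A-\nu$ lies in $\ker\Delta$. Surjectivity is Lemma \ref{Deltasur}. It is filtered since $\Delta$ is bifiltered and we give $U(A)/(1_A-\nu)\otimes_\FQ\mathbb C$ the quotient of the bifiltration of $U(A)$ from Definition \ref{SnAdef}.

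Next I would compute ${\rm gr}_h$ of the source. Since $1_A$ is central in $A$, it is central in $U(A)$, and by PBW the algebra $U(A)$ is a free $\FQ[1_A]$-module on the ordered PBW monomials in a basis of $A'$; one checks $F^{i,\bullet}U(A)=\FQ[1_A]\cdot\{\text{PBW monomials in }A'\text{ of length}\le i\}$. Hence $U(A)/(1_A-\nu)\otimes_\FQ\mathbb C$ has, as a filtered vector space, basis the images of the PBW monomials in $A'$ filtered by length, and because every commutator in $U(A)$ strictly lowers horizontal degree, ${\rm gr}_h\big(U(A)/(1_A-\nu)\otimes_\FQ\mathbb C\big)$ is a commutative algebra generated in degree $1$ by $A'$, i.e. a graded quotient of $S^\bullet(A')\otimes_\FQ\mathbb C$; comparing dimensions degree by degree it equals $S^\bullet(A')\otimes_\FQ\mathbb C$, the horizontal grading being the symmetric-power grading.

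By the earlier Proposition computing ${\rm gr}_h(S^\nu(A))$ there is a graded vector space identification ${\rm gr}_h(S^\nu(A))\cong S^\bullet(A')\otimes_\FQ\mathbb C$, again with horizontal grading equal to the symmetric-power grading, and both algebras are generated in horizontal degree $1$ (for the target, by the argument in the proof of Lemma \ref{Deltasur}, passed to the restricted ultraproduct). The graded algebra map ${\rm gr}_h(\widetilde\Delta)$ sends $a\in A'$ in degree $1$ to the class of $\delta(a)=\sum_i a_i$, which under the symmetrizing identification is $a$ again; so it is surjective, hence bijective since the homogeneous components are finite-dimensional of equal dimension. Thus ${\rm gr}_h(\widetilde\Delta)$ is an isomorphism, and since a filtered linear map inducing an isomorphism on associated graded — between spaces with exhaustive filtrations bounded below — is a filtered isomorphism (filtered five-lemma, by induction on filtration degree), $\widetilde\Delta$ is a filtered algebra isomorphism; the identical argument with the vertical, hence total, filtration completes the claim.

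The main obstacle is the computation of ${\rm gr}_h$ of the source: one must check that passing to the quotient by $1_A-\nu$ commutes with taking the horizontal associated graded. This rests on the symbol of $1_A-\nu$ being $1_A-\nu$ itself (both $1_A$ and the scalar $\nu$ lie in horizontal degree $0$) and on its being a non-zero-divisor in ${\rm gr}_h(U(A)\otimes_\FQ\mathbb C)\cong\mathbb C[1_A]\otimes S^\bullet(A')$, equivalently the PBW freeness statement above; granted that, the rest is a dimension count plus the filtered five-lemma.
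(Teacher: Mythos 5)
Your proof is correct and follows essentially the same route as the paper: surjectivity from Lemma \ref{Deltasur} combined with identifying both horizontal associated graded algebras with $S^{\bullet}(A')\otimes_{\FQ}\mathbb C$ (the paper states the source-side computation as ${\rm gr}_h(U(A)/(1_A-\nu))\simeq {\rm gr}(U(A'))\simeq S^{\bullet}(A')$, which your PBW/freeness-over-$\FQ[1_A]$ argument simply makes precise). The only caveat is that since $A'$ may be infinite-dimensional, the ``equal finite dimensions'' count should be run on the finite-dimensional bifiltered pieces $F^{i,j}$ rather than on whole horizontal components, a point the paper likewise leaves implicit.
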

\begin{proof}
We already know that this map is surjective. Now this map induces a graded map of the associated graded algebras with respect to the horizontal filtration. We know that ${\rm gr}_h(S^\nu(A)) = S^\bullet(A')$. Now ${\rm gr}_h(U(A)/(1_a - \nu))$ is isomorphic to ${\rm gr}(U(A')) \simeq S^\bullet(A')$. Hence, since the map is surjective, it also has to be injective. Hence $\widetilde{\Delta}$ is an isomorphism.
\end{proof}

\begin{rem} \label{remsymalg}
The same construction can be repeated in the case of algebraic $\nu$. In order to do so we should consider a lattice filtered unital algebra $A_{\mathbb Z}$ defined over $\mathbb Z$ and the sequence of algebras $A_n = A_{\mathbb Z} \otimes_{\mathbb Z} \Fpn$.

In this case as we know $\nu_n < p_n$,  all of the constructions which use the isomorphisms related to symmetric invariants work in the same way and we can still define the $\nu$-symmetric power as $S^\nu(A) = \prodF^r S^{\nu_n}(A_n)$. Everything else can be repeated and we obtain a similar isomorphism $\widetilde{\Delta}: U(A_{\mathbb Z}\otimes_{\mathbb Z} \mathbb C)/(1_A - \nu) \to S^\nu(A)$.
\end{rem}

\section{Extended Cherednik algebras}

\subsection{Definition and basic facts}

In this section we will introduce the notion of the extended Cherednik algebra and will discuss its basic properties. This algebra was introduced in \cite{etingof2020representations}, see this paper for more information regarding it.  Everywhere we suppose that $\chct (\Bbbk) > n$.

\begin{def0} [Definition 2.4 in \cite{etingof2020representations}] \label{extcherfin}
For $t,k  \in \Bbbk $ and $n,r\in \mathbb Z_{>0}$ define the extended Cherednik algebra $H_{t,k}(n,r)$ to be a quotient of the semi-direct product:
$$
\Bbbk[S_n] \ltimes [\Bbbk\langle x_1,\dots, x_n,y_1,\dots,y_n\rangle \otimes (\End(\Bbbk^r))^{\otimes n}] \ ,
$$
where $S_n$ acts by permuting $x_i, y_i$ and the copies of $\End(\Bbbk^r)$. The quotient is taken by the ideal generated by the following relations:
\begin{gather*}
    [x_i,x_j] = 0 \ ,\ [y_i,y_j] = 0 \ , \\
    [y_i,x_j] = \delta_{ij}(t-k\sum_{m \ne i}s_{im}\sigma_{im}) + (1-\delta_{ij})ks_{ij}\sigma_{ij} \ ,
\end{gather*}
where $s_{ij}$ are the transpositions from $S_n$ viewed as elements of $\Bbbk [S_n]$ and $\sigma_{ij}$ is the following element of
$\enalg^{\otimes n}$:
$$
\sigma_{ij} = \sum_{\alpha,\beta} (E_{\alpha\beta})_i(E_{\beta\alpha})_j \ .
$$
Here by $(g)_i$ for $g \in \End(\Bbbk^r)$ we denote an element of $\End(\Bbbk^r)^{\otimes n}$ which is equal to $1 \otimes \dots \otimes g \otimes \dots \otimes 1$ with $g$ on the $i$-th place\footnote{We use the extra brackets around $g$ here, since in what follows we will consider the cases where the elements of $\enalg$ we are going to use are equal to elementary matrices.}. Notice that $\sigma_{ij}$ as an operator acting on $(\Bbbk^r)^{\otimes n}$ is exactly the operator which transposes the $i$-th and $j$-th spaces.
\end{def0}

Obviously for $r=1$ the algebra $H_{t,k}(n,1)$ is just the usual rational Cherednik algebra of type $A_{n-1}$. Now there is also an analogue of the polynomial representation for $H_{t,k}(n,r)$. 
\begin{prop} [Proposition 2.7 in \cite{etingof2020representations}]\label{extcherstrep}
Consider the vector space \linebreak $V(n,r) = \Bbbk[x_1,\dots,x_n]\otimes (\Bbbk^r)^{\otimes n}$. It has a natural action of $H_{t,k}(n,r)$ given by the following formulas:
\begin{gather*}
    x_i \mapsto x_i \cdot \otimes 1 \ , \ s_{ij} \mapsto s^{x}_{ij}\otimes \sigma_{ij} \ , (g)_i \mapsto 1 \otimes (g)_i \\
    y_i \mapsto \partial_i \otimes 1 - k\sum_{j \ne i}\frac{s^{x}_{ij} \otimes 1}{x_i-x_j} \ ,
\end{gather*}
where $s^x_{ij}$ is the transposition acting on $\Bbbk[x_1,\dots,x_n]$. 
\end{prop}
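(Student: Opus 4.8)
\begin{sproof}
The plan is to verify that the assignments in the statement are compatible with every defining relation of $H_{t,k}(n,r)$, i.e. that they extend to an algebra homomorphism $\Bbbk[S_n]\ltimes[\Bbbk\langle x_1,\dots,x_n,y_1,\dots,y_n\rangle\otimes\enalg^{\otimes n}]\to\End(V(n,r))$ which annihilates the relation ideal. The underlying idea is that $V(n,r)=\Bbbk[x_1,\dots,x_n]\otimes(\Bbbk^r)^{\otimes n}$ is a tensor product of two representations: on the first factor the ``Cherednik part'' $x_i,y_i,s_{ij}$ acts by the usual polynomial representation of the rational Cherednik algebra of type $A_{n-1}$ (with $s_{ij}\mapsto s^x_{ij}$, $x_i\mapsto x_i\cdot$, $y_i\mapsto$ the Dunkl operator $D_i$), while on the second factor the $\enalg^{\otimes n}$-generators $(g)_i$ act tautologically and $S_n$ permutes the tensor legs (producing the operator $\sigma_{ij}$); the prescription in the statement is exactly the diagonal combination of these two actions.

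First I would dispatch the easy relations. The relation $[x_i,x_j]=0$ holds since multiplication operators commute. The internal relations of $\enalg^{\otimes n}$ are automatic because $(g)_i\mapsto 1\otimes(g)_i$ is literally the standard action of $\enalg^{\otimes n}$ on $(\Bbbk^r)^{\otimes n}$. For the semidirect-product relations one checks that conjugation by $s^x_{ij}\otimes\sigma_{ij}$ sends $x_l\cdot\otimes 1$ to $x_{s_{ij}(l)}\cdot\otimes 1$, sends $1\otimes(g)_l$ to $1\otimes(g)_{s_{ij}(l)}$ (because $\sigma_{ij}$ transposes the $i$-th and $j$-th tensor legs of $(\Bbbk^r)^{\otimes n}$), and permutes the operators $y_l$ in the expected way, since the ingredients $\partial_l$ and $\frac{s^x_{lm}}{x_l-x_m}$ of the Dunkl operator are permuted correctly by the $s^x_{ij}$. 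Since $y_l$ and $x_l$ act as the identity on the second tensor factor, none of the $S_n$-equivariance checks interact with the matrix degrees of freedom.

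The two substantive relations are $[y_i,y_j]=0$ and the mixed relation $[y_i,x_j]=\delta_{ij}(t-k\sum_{m\ne i}s_{im}\sigma_{im})+(1-\delta_{ij})ks_{ij}\sigma_{ij}$. Here I would use that $y_i$ acts as $D_i\otimes 1$ and $x_j$ as $(x_j\cdot)\otimes 1$, so both commutators are computed inside $\End(\Bbbk[x_1,\dots,x_n])\otimes 1$, and the whole verification collapses to the corresponding identities for the polynomial representation of the classical ($r=1$) rational Cherednik algebra of type $A_{n-1}$: commutativity $[D_i,D_j]=0$, and $[D_i,x_j\cdot]=\delta_{ij}(t-k\sum_{m\ne i}s^x_{im})+(1-\delta_{ij})ks^x_{ij}$. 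To match the right-hand sides, the final point is the collapse $\sigma_{ij}^2=\mathrm{Id}$: the algebra element $s_{ij}\sigma_{ij}$ acts on $V(n,r)$ as $(s^x_{ij}\otimes\sigma_{ij})(1\otimes\sigma_{ij})=s^x_{ij}\otimes\sigma_{ij}^2=s^x_{ij}\otimes 1$, so $\delta_{ij}(t-k\sum_{m\ne i}s_{im}\sigma_{im})+(1-\delta_{ij})ks_{ij}\sigma_{ij}$ indeed acts as $\bigl(\delta_{ij}(t-k\sum_{m\ne i}s^x_{im})+(1-\delta_{ij})ks^x_{ij}\bigr)\otimes 1$, which is precisely what the computation of $[y_i,x_j]$ produces.

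The only genuinely nontrivial input is the classical fact $[D_i,D_j]=0$; this is standard, but a self-contained argument requires the short computation with the relations above (equivalently, it is this Proposition itself in the case $r=1$). Everything else is bookkeeping, the one real ``trick'' being the identity $\sigma_{ij}^2=\mathrm{Id}$, which turns the $s_{ij}\sigma_{ij}$ terms of the extended algebra into the ordinary reflection terms of the classical polynomial representation. I would finish by noting that the statement asserts only that these formulas define an action, so no faithfulness or irreducibility claim needs to be addressed here.
\end{sproof}
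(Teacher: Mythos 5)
Your proposal is correct and follows essentially the same route as the paper: the paper's (very terse) proof rests exactly on the observation that $s_{ij}\sigma_{ij}$ acts on $V(n,r)$ as $s^x_{ij}\otimes 1$ (i.e.\ $\sigma_{ij}^2=\mathrm{Id}$), which reduces all the relations to the classical Dunkl-operator identities for the $r=1$ polynomial representation. You simply spell out the bookkeeping that the paper leaves implicit.
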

\begin{proof}
Notice that $s_{ij}^x = s_{ij}\sigma_{ij}$ in this representation. Using this it is easy to see that all the operators satisfy the required relations.
\end{proof}

\begin{cor} [Proposition 2.8 in \cite{etingof2020representations}]
The algebra $H_{t,k}(n,r)$ enjoys the PBW-property in the sense that the multiplication map 
$$
\Bbbk[x_1,\dots,x_n] \otimes [\Bbbk [S_n]\otimes (\End(\Bbbk^r))^{\otimes n}] \otimes \Bbbk[y_1,\dots,y_n] \to H_{t,k}(n,r)
$$
is an isomorphism. Moreover the multiplication maps for any other ordering of tensor multiples are also isomorphisms. 
\end{cor}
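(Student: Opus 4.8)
The plan is the standard filtered-deformation argument, which reduces the PBW property to a finite Jacobi-type check. Surjectivity of the multiplication map needs no computation: using $[x_i,x_j]=[y_i,y_j]=0$, the relation $[y_i,x_j]=(\text{an element of }\Bbbk[S_n]\ltimes\End(\Bbbk^r)^{\otimes n})$, and the semidirect-product relations, any word in the generators rewrites as a linear combination of monomials $x^{\mathbf a}\,u\,y^{\mathbf b}$ with $u\in\Bbbk[S_n]\otimes\End(\Bbbk^r)^{\otimes n}$, and the same works for any prescribed ordering of the three factors. So the only issue is injectivity.

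Put the filtration $\mathcal F$ on $H_{t,k}(n,r)$ with $\deg x_i=\deg y_i=1$ and $\deg\big(\Bbbk[S_n]\ltimes\End(\Bbbk^r)^{\otimes n}\big)=0$. Each right-hand side of the defining relations has $\mathcal F$-degree $0<2$, so $\gr_{\mathcal F}H_{t,k}(n,r)$ is a quotient of the classical algebra $H^{\mathrm{cl}}:=\Bbbk[S_n]\ltimes\big(\End(\Bbbk^r)^{\otimes n}\otimes\Bbbk[x_1,\dots,x_n,y_1,\dots,y_n]\big)$, whose own multiplication map $\Bbbk[x]\otimes(\Bbbk[S_n]\otimes\End(\Bbbk^r)^{\otimes n})\otimes\Bbbk[y]\to H^{\mathrm{cl}}$ is visibly an isomorphism (the normal form for a smash product). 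The multiplication map of the Corollary is $\mathcal F$-filtered, and its associated graded is this isomorphism composed with the surjection $H^{\mathrm{cl}}\twoheadrightarrow\gr_{\mathcal F}H_{t,k}(n,r)$; since $\mathcal F$ is exhaustive and nonnegative, it suffices to show $\gr_{\mathcal F}H_{t,k}(n,r)=H^{\mathrm{cl}}$, i.e. that $H_{t,k}(n,r)$ is a flat deformation of $H^{\mathrm{cl}}$.

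Flatness I would obtain from the standard PBW criterion for filtered deformations of $\Bbbk[\Gamma]\ltimes S(V)$ (Braverman--Gan, Etingof--Ginzburg; in the present generality, with the extra coefficient algebra $D:=\End(\Bbbk^r)^{\otimes n}$, the version for PBW deformations of skew group algebras of Shepler--Witherspoon), which applies here because the quadratic part of the relations cuts out the Koszul algebra $S_D(V)$ and because $\chct(\Bbbk)>n$. The criterion reduces flatness to the Jacobi identity $\sum_{\mathrm{cyc}}\big[[u,v],w\big]=0$ in $D\ltimes S(V)$ for all generators $u,v,w$; as $[x_i,x_j]=[y_i,y_j]=0$, this is just the two families $[[y_i,x_j],x_k]=[[y_i,x_k],x_j]$ and $[[y_i,x_k],y_j]=[[y_j,x_k],y_i]$ for all $i,j,k$. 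These I would verify directly from the one elementary observation that $s_{ij}\sigma_{ij}$ commutes with $\End(\Bbbk^r)^{\otimes n}$ and acts on the $x$'s and $y$'s exactly by the transposition $(i\,j)$, so that $[s_{ij}\sigma_{ij},x_k]=(x_{(i\,j)k}-x_k)\,s_{ij}\sigma_{ij}$ and similarly for $y_k$; the two identities then follow precisely as for the rational Cherednik algebra of type $A_{n-1}$, with $s_{ij}\sigma_{ij}$ in the role of the reflection $s_{ij}$. (Alternatively, $w\mapsto w\,\sigma_{w^{-1}}$ embeds the type-$A$ rational Cherednik algebra $H_{t,k}(n,1)$ into $H_{t,k}(n,r)$ centralizing $\End(\Bbbk^r)^{\otimes n}$ and yields a surjection $H_{t,k}(n,1)\otimes\End(\Bbbk^r)^{\otimes n}\twoheadrightarrow H_{t,k}(n,r)$, so PBW here is equivalent to the classical PBW for $H_{t,k}(n,1)$ — but proving that still comes down to the same Jacobi identity.)

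The only real obstacle is this last verification: it is not deep, but it is the one place where the twisted structure ($s_{ij}$ replaced by $s_{ij}\sigma_{ij}$) has to be reconciled with the reflection geometry, and one must use a form of the PBW criterion valid in characteristic $p>n$, which is the reason for that hypothesis. The claims about the other orderings of the tensor factors follow by applying the same argument to the corresponding multiplication map, or formally from the established isomorphism by commuting $\End(\Bbbk^r)^{\otimes n}$ past the $x$'s and $y$'s.
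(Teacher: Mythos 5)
Your argument is correct but takes a genuinely different route from the paper. The paper proves the PBW property by appealing to the faithfulness of the polynomial representation of Proposition \ref{extcherstrep}: it notes that $H_{t,k}(n,r)$ injects into $\Bbbk[S_n]\ltimes(\mathcal D^{\mathrm{reg}}(\mathbb A^n)\otimes\End(\Bbbk^r)^{\otimes n})$, where normal-form monomials are visibly linearly independent. That is the Dunkl-embedding argument, and it has the side benefit of exhibiting an explicit faithful module. You instead go by filtered deformation: put the vertical filtration with $\deg x_i=\deg y_i=1$, observe that $\gr H_{t,k}(n,r)$ is a quotient of $H^{\mathrm{cl}}=\Bbbk[S_n]\ltimes(\End(\Bbbk^r)^{\otimes n}\otimes\Bbbk[x,y])$, and invoke the Braverman--Gan/Etingof--Ginzburg/Shepler--Witherspoon PBW criterion (with the semisimple coefficient algebra $\Bbbk[S_n]\ltimes\End(\Bbbk^r)^{\otimes n}$ in place of the bare group algebra, which is where $\chct(\Bbbk)>n$ enters) to reduce flatness to the Jacobi identity, and reduce that computation to the type $A_{n-1}$ case via the observation that $s_{ij}\sigma_{ij}$ commutes with $\End(\Bbbk^r)^{\otimes n}$ and acts on $x,y$ exactly as the transposition $s_{ij}$. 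Both approaches are standard for Cherednik-type algebras. The Dunkl route is shorter here because the representation is already in hand, while your Koszul/Jacobi route is more self-contained and makes clear that the whole PBW question reduces to the untwisted $r=1$ case. Two small remarks: you leave the Jacobi identity as a claimed-but-unchecked computation, so this is a sketch rather than a complete proof (though the reduction to $r=1$ is the essential step); and in the parenthetical alternative, a surjection $H_{t,k}(n,1)\otimes\End(\Bbbk^r)^{\otimes n}\twoheadrightarrow H_{t,k}(n,r)$ alone does not by itself transport the PBW property in the required direction without some control of the kernel, so the main filtration argument is the one to keep.
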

\begin{proof}
This follows from the fact that the polynomial representation introduced above is faithful and the image of $H_{t,k}(n,r)$ in  $\End(V(n,r))$ is the subalgebra of \linebreak $ \Bbbk[S_n] \ltimes(\mathcal D^{reg}(\mathbb A^n) \otimes \End(\Bbbk^r)^{\otimes n})$, where $\mathcal  D^{reg}(\mathbb A^n)$ is the algebra of differential operators on the regular locus of $\mathbb A^n$.
\end{proof}

We can also define the spherical subalgebra of $H_{t,k}(n,r)$.
\begin{def0}
For $t,k  \in \Bbbk $ and $n,r\in \mathbb Z_{>0}$ define the spherical subalgebra of the extended Cherednik algebra to be $B_{t,k}(n,r) = \bold{e}H_{t,c}(n,r)\bold{e}$, where $\bold{e}$ is a symmetrizing element $\bold{e} = \frac{1}{n!}\sum_{s \in S_n}s$.
\end{def0}

There is a natural vector space bifiltration on $H_{t,k}(n,r)$. 

\begin{def0}Assign to an element $\prod_i x_i^{n_i}s\bigotimes_i (g_i)_i \prod_i y_i^{m_i} \in H_{t,k}(n,r)$ the following bidegree. Denote by $H = |\{i \in \{1,\dots,n\}\ |\  n_i = 0, \ g_i \notin \Bbbk \cdot \Id_{\Bbbk^r}, \ m_i = 0 \}|$, and by $V = \sum_{i} (n_i+m_i)$. Then $\deg(\prod_i x_i^{n_i}s\bigotimes_i (g_i)_i \prod_i y_i^{m_i}) = (n-H,V)$. Define the bifiltration on $H_{t,k}(n,r)$ using this formula.
\end{def0}
I.e. the horizontal degree tells us how many indices actually appear in the monomial, and the vertical degree is the total polynomial degree of the monomial. Note that this is not an algebra bifiltration. The same vector space bifiltration restricts to the spherical subalgebra.

However note that the associated graded of $H_{t,k}(n,r)$ with respect to the vertical filtration is simply ${\rm gr}_v(H_{t,k}(n,r)) \simeq \Bbbk [S_n] \ltimes (\Bbbk [x_1,\dots,x_n,y_1, \dots,y_n] \otimes \End(\Bbbk^r)^{\otimes n}) $. Now the vector space bifiltration of $H_{t,k}(n,r)$ restricts to ${\rm gr}_v(H_{t,k}(n,r))$ and makes it a bifiltered algebra.

Moreover, the associated graded of the spherical subalgebra $B_{t,k}(n,r)$ is given by a similar formula  ${\rm gr}_v(B_{t,k}(n,r)) \simeq (\Bbbk [x_1,\dots,x_n,y_1, \dots,y_n] \otimes (\End(\Bbbk^r))^{\otimes n})^{S_n}$. And again this associated graded is a bifiltered algebra.

But now this is simply ${\rm gr}_v(B_{t,k}(n,r)) \simeq S^n(\End(\Bbbk^r)[x,y])$. And the bifiltration on ${\rm gr}_v(B_{t,k}(n,r))$  coincides exactly with the standard bifiltration of $S^n(\End(\Bbbk^r)[x,y])$ arising from the fact that $\End(\Bbbk^r)[x,y]$ is a filtered unital algebra with the filtration given by the total degree of the polynomial. I.e. we are now in the setting of Section \ref{symalgsec} with $A = \End(\Bbbk^r)[x,y]$. We will use the results of that Section below to construct a generating set of $B_{t,k}(n,r)$ and then later, when we take the ultraproduct of $B_{t,k}(n,r)$ to obtain the DDCA.

\begin{rem} \label{sphfinrem}
As the final remark of this section note that we can also construct the spherical subalgebra using the induction functor in the following way:
$$
B_{t,k}(n,r) = \Hom_{S_k}(\Bbbk,H_{t,k}(n,r)\bold{e}) = \End_{H_{t,k}(n,r)}(H_{t,k}(n,r)\bold{e}) = \End_{H_{t,k}(n,r)}(\Ind_{S_k}^{H_{t,k}(n,r)}(\Bbbk)) \ .
$$
\end{rem}

\subsection{Generating set of $ B_{t,k}(n,r)$} \label{gensetsect}

In this section we would like to present a way to construct a generating set for $B_{t,k}(n,r)$.

Pick a basis in $\End(\Bbbk^r)$, which contains $\Id_{\Bbbk^r}$ as an element. Let us denote this basis by $\alpha_i \in \End(\Bbbk^r)$ with $i$ going from $1$ to $r^2$ and $\alpha_1 = \Id_{\Bbbk^r}$. Now we can define the following elements in $B_{t,k}(n,r)$.

\begin{def0}
Define the elements $T_{r,q,n}(g)$, for $g \in \enalg$ by the following formula (here $L = r+q$):
$$
\sum_{r,q \ge 0, r+q = L}T_{r,q,n}(g)\frac{u^r}{r!}\frac{v^q}{q!}= \sum_{i=1}^n(g)_i\frac{(ux_i +vy_i)^L }{L!}\bold{e} ,
$$
where $u,v$ are formal variables. 

They are defined for $\chct(\Bbbk) > r+q$ or zero characteristic.
\end{def0}

More explicitly,  $T_{r,q,n}(g)$ is proportional to the sum of all shuffles of $r$ copies of $x_i$ and $q$ copies of $y_i$ multiplied by $(g)_i$ and summed over all $i$ from $1$ to $n$.

Note that the highest term of $T_{r,q,n}$ with respect to the vertical filtration is:
$$
T_{r,q,n}(g) = \sum_i (g)_i x_i^ry_i^q  + \textrm{lower order terms} \ .
$$

Suppose that $\chct(\Bbbk)=0$. Now note that we have established that we have an isomorphism ${\rm gr}_v(B_{t,k}(n,r)) \simeq S^n(\enalg[x,y])$. From Proposition \ref{deltaprop} we know that there is a map $\delta_n: \enalg[x,y] \to S^n(\enalg[x,y])$. Note that under this map \linebreak $\delta_n(g \cdot x^ry^q) = \sum_i (g)_i x_i^r y_i^q$. I.e. the images of $T_{r,q,n}(g)$ in the associated graded span exactly the image of the map $\delta_n$. More precisely it is enough to consider all $T_{r,q,n}(\alpha_l)$ for $r,q \in \mathbb Z_{\ge 0}$ and $\alpha_l \in \{1,\dots,r^2\}$ to span this image, since $\alpha_l\cdot x^ry^q$ for all such $r,q,l$ are the basis of $\enalg[x,y]$.

Now we can define something like the shuffled products of the above elements. 

\begin{def0}
Denote by $\bold{m}$ the collection of integers $m_{r,q,l}$, for $r,q \in \mathbb Z_{\ge 0}$ and \linebreak $l \in \{1,\dots,r^2\}$. Denote  $|\bold{m}| =\sum_{r,q,l}m_{r,q,l}$ and $w(\bold{m}) = \sum_{r,q,l}(r+q)m_{r,q,l}$. We define $T_n(\bold{m})$ with $M = |\bold{m}|$, by the following formula:
$$
\sum_{\bold {m}, |\bold{m}| = M}T_n(\bold{m}) \prod_{r,q,l}\frac{z_{r,q,l}^{m_{r,q,l}}}{m_{r,q,l}!} = \frac{(\sum_{r,q,l} z_{r,q,l}T_{r,q,n}(\alpha_l))^M}{M!} \ ,
$$
here $z_{r,q,l}$ are formal variables.

Note that these elements are defined for $w(\bold{m}) < \chct(\Bbbk)$ or zero characteristic. 
\end{def0}

Note that with respect to the total filtration 
$$T_n(\bold{m}) = \prod_{r,q,l} T_{r,q,n}(\alpha_l))^{m_{r,q,l}} + \textrm{lower order terms}  \ . 
$$

Suppose we are in the case $\chct(\Bbbk) = 0$.

Since $T_{r,q,n}(\alpha_l)$ span the image of $\delta_n$ in the associated graded algebra with respect to the vertical filtration, it follows that $T_n(\bold{m})$ span everything which is generated by $\delta_n$ inside ${\rm gr}_v(B_{t,k}(n,r))$. If we use the notation of Definition \ref{Deltadef}, we can state this by saying that the images of $T_n(\bold{m})$ in the associated graded span the image of $\Delta_n$. But from Proposition \ref{Deltasur} we know that this image covers the whole algebra. So the following Proposition follows.

\begin{prop}
Suppose $\chct(\Bbbk)=0$. The elements $T_n(\bold{m})$ for all choices of $\bold{m}$ form a generating set of $B_{t,k}(n,r)$.
\end{prop}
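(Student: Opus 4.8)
The plan is to reduce the claim to the surjectivity result already established in Lemma~\ref{Deltasur}. The key observation, which the paragraph preceding the statement already sets up, is that the associated graded algebra $\gr_v(B_{t,k}(n,r))$ is isomorphic (as a bifiltered algebra) to $S^n(A)$ with $A = \enalg[x,y]$, and that under this identification the highest vertical term of $T_{r,q,n}(\alpha_l)$ is exactly $\delta_n(\alpha_l x^r y^q)$, while the highest total-degree term of $T_n(\bold m)$ is the product $\prod_{r,q,l}\delta_n(\alpha_l x^r y^q)^{m_{r,q,l}}$. Since $\{\alpha_l x^r y^q\}$ is a basis of $A' = A$ modulo scalars (well, of a complement to $\Bbbk\cdot 1$, noting $\alpha_1 = \Id$ contributes the scalars when $r=q=0$; one must be slightly careful and exclude $(r,q,l)=(0,0,1)$ or simply note $T_{0,0,n}(\Id) = \be$ is the identity), the symmetrized monomials $\prod \delta_n(\alpha_l x^r y^q)^{m_{r,q,l}}$ are precisely the standard PBW-type spanning set for the image of $\Delta_n$ in $\gr_v$.

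First I would make the identification $\gr_v(B_{t,k}(n,r)) \simeq S^n(\enalg[x,y])$ explicit and record that the vector space bifiltration on $B_{t,k}(n,r)$ descends to the standard bifiltration on $S^n(A)$ of Definition~\ref{SnAdef}. Next I would compute, in $\gr_v$, that the image of $T_{r,q,n}(g)$ is $\delta_n(g\,x^r y^q)$; this is the content of the displayed formula $T_{r,q,n}(g) = \sum_i (g)_i x_i^r y_i^q + \text{l.o.t.}$ Then I would check that the image of $T_n(\bold m)$ in $\gr_t$ (associated graded with respect to the \emph{total} filtration) equals $\prod_{r,q,l}(\text{image of }T_{r,q,n}(\alpha_l))^{m_{r,q,l}} = \prod_{r,q,l}\delta_n(\alpha_l x^r y^q)^{m_{r,q,l}}$, again directly from the definition of $T_n(\bold m)$ via the generating-function formula. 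At this point, Lemma~\ref{Deltasur} tells us $\Delta_n: U(A) \to S^n(A)$ is surjective, and the PBW basis of $U(A)$ consists exactly of ordered monomials in a basis of $A$; applying $\Delta_n$ (equivalently $\delta_n$ on generators and multiplying) and using commutativity of $S^n(A)$, we see the symmetrized monomials $\prod_{r,q,l}\delta_n(\alpha_l x^r y^q)^{m_{r,q,l}}$ span $S^n(A) = \gr_v(B_{t,k}(n,r))$.

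The final step is a standard filtered-to-graded lifting argument: if a set of elements of a filtered algebra has images spanning the associated graded algebra, and the filtration is exhaustive with finite-dimensional pieces, then the elements themselves span the algebra. I would apply this twice, or once with the total filtration: the $T_n(\bold m)$ have total-degree-leading terms spanning $\gr_t(B_{t,k}(n,r))$, hence by induction on total degree they span $B_{t,k}(n,r)$. Concretely, given $f \in B_{t,k}(n,r)$ of total degree $d$, subtract an appropriate linear combination of $T_n(\bold m)$ with $w(\bold m) + |\bold m| \le d$ matching the leading term, reducing to lower total degree, and conclude by induction (the base case being degree $0$, i.e.\ scalar multiples of $\be = T_{0,0,n}(\Id)$).

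The main obstacle is bookkeeping about which filtration controls which step: $T_{r,q,n}$ behaves well with respect to the vertical filtration, whereas $T_n(\bold m)$ is defined by taking products and its leading behavior is with respect to the total filtration; one needs the compatibility of these, namely that $\gr_h \gr_v$ or equivalently $\gr$ with respect to the total filtration of $B_{t,k}(n,r)$ agrees with that of $S^n(A)$, which in turn relies on the observation (made in the excerpt) that the restricted ultraproduct with respect to the total filtration equals the one with respect to the horizontal filtration, combined with Proposition~\ref{Deltaprop}. Once the graded pieces are correctly matched, the surjectivity of $\Delta_n$ from Lemma~\ref{Deltasur} does all the real work, and the rest is the routine lifting lemma. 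I expect no genuine difficulty beyond carefully tracking these gradings and the harmless role of the identity element $T_{0,0,n}(\Id) = \be$.
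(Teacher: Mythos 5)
Your proposal is correct and follows essentially the same route as the paper: identify $\gr_v(B_{t,k}(n,r))$ with $S^n(\enalg[x,y])$, observe that the leading terms of the $T_n(\bold m)$ span the image of $\Delta_n$, invoke the surjectivity of $\Delta_n$ from Lemma~\ref{Deltasur}, and finish with the standard filtered-to-graded lifting argument (which the paper leaves implicit). Your extra care with the total-versus-vertical filtration bookkeeping and the harmless role of $T_{0,0,n}(\Id)$ (which is $n\bold{e}$, a nonzero scalar multiple of the identity in characteristic $0$) only makes explicit what the paper glosses over.
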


\subsection{Extended Cherednik algebras in complex rank}

In this section we will explain how to work with the extended Cherednik algebras in the complex rank. First we will define a category of representations of $H_{t,k}(\nu,r)$.

In order to do this we need to explain a few things about the central elements in $\Rep(S_\nu)$. This will build on the discussion around Definition \ref{centraldef}.

\begin{cons}
Let us define the action of the  central element $\Omega$ on objects of $\Rep(S_\nu)$. Consider $E_2 \subset \Bbbk[S_\nu]$ as defined in \cite{etingof2014representation}. This is the interpolation of the subspaces spanned by transpositions in the group algebra. Then we have a map \linebreak $\Delta_{E_2}: E_2 \to E_2 \otimes E_2$ that interpolates the usual coproduct map $\Delta(s_{ij}) =s_{ij}\otimes s_{ij} $. Also we have a map $\omega:\Bbbk \to E_2$ interpolating the central element inclusion map $1 \mapsto \sum s_{ij} = \Omega$. We also automatically have an action map $a_{E_2}:E_2 \otimes V\to V$ for any object $V \in \Rep(S_\nu)$. Thus we get the alternative way to define the map $\Omega:V \to V$ given by the identity functor endomorphism. More precisely, this map is given by $a_{E_2}\circ (\omega \otimes 1)$. 
\end{cons}

For our purposes we need to slightly upgrade this central element. 
\begin{cons} \label{extcentel}
Note that there is a map $i_{E_2}: E_2 \to \mathfrak h \otimes \mathfrak h$, which interpolates the map $s_{ij} \mapsto \frac{x_i \otimes x_j + x_j \otimes x_i}{2}$. Also consider a map $coev_{\End(\Bbbk^r)}:\Bbbk \to \End(\Bbbk^r)\otimes \End(\Bbbk^r)$ (i.e. we have $1 \mapsto \sum_{i,j} E_{ij}\otimes E_{ji}$). Now we can construct $\omega_{\End(\Bbbk^r)}$ as follows:
$$
\omega_{\End(\Bbbk^r)} = (1 \otimes \sigma_{\End(\Bbbk^r), \mathfrak h}\otimes 1)\circ(1\otimes i_{E_2}\otimes 1) \circ(1 \otimes \Delta_{E_2}) \circ (1 \otimes \omega)\circ tw_{\End(\Bbbk^r)} \ ,
$$
which takes $\Bbbk \to \End(\Bbbk^r) \otimes \mathfrak h \otimes \End(\Bbbk^r) \otimes \mathfrak h \otimes E_2$.

Now suppose $V$ is an object of $\Rep(S_\nu)$, with a fixed map $\alpha: \End(\Bbbk^r) \otimes \mathfrak h \otimes V \to V$. Then we can define $\Omega_{\End(\Bbbk^r)}: V \to V$ as 
$$
\Omega_{\End(\Bbbk^r)} =\alpha \circ(1\otimes \alpha )\circ(1 \otimes a_{E_2})\circ(\omega_{\End(\Bbbk^r)} \otimes 1) \ .
$$
So we have another "central element" for special objects of $\Rep(S_\nu)$. 
\end{cons}

\begin{def0} \label{ultcherdef}
The category $\Rep(H_{t,k}(\nu,r))$ is defined as follows. The objects are given by  triples $(M,x,y,\alpha)$, where $M$ is an ind-object of $\Rep(S_\nu)$, $x$ is a map \linebreak  $x: \mathfrak h^* \otimes M \to M$, $y$ a map $y: \mathfrak h \otimes M \to M$ and $\alpha$ is a map $\alpha:(\End(\Bbbk^r) \otimes \mathfrak h)\otimes M \to M$, all of which are morphisms in $\IND(\Rep(S_{\nu}))$. They are required to satisfy the following conditions:
$$
x \circ (1\otimes x) -x \circ (1\otimes x) \circ (\sigma \otimes 1) = 0  , 
$$
as a map from $\mathfrak h^* \otimes \mathfrak h^* \otimes M$ to $M$;
$$
y \circ (1\otimes y) -y \circ (1\otimes y) \circ (\sigma \otimes 1) = 0  , 
$$
as a map from $\mathfrak h \otimes \mathfrak h \otimes M$ to $M$;
\begin{gather*}
\alpha \circ (1\otimes \alpha) - \alpha \circ (1\otimes \alpha) \circ (\sigma_{\enalg\otimes \mathfrak h}\otimes 1) = \\
= \alpha \circ (\mu_{\enalg}\otimes 1 - [\mu_{\enalg}\otimes 1]\circ [\sigma_{\enalg}\otimes 1])   \circ(1 \otimes \pi_{\rm{diag}} \otimes 1)\circ(1\otimes \sigma_{\mathfrak h, \enalg}\otimes 1),
\end{gather*}
as a map from $\End(\Bbbk^r) \otimes \mathfrak h \otimes \End(\Bbbk^r) \otimes \mathfrak h \otimes M \to M$, where $\mu_{\End(\Bbbk^r)}$ is multiplication in $\End(\Bbbk^r)$ and $\pi_{\rm{diag}}:\mathfrak h\otimes \mathfrak h \to \mathfrak h$ is the interpolation of the projection $x_i \otimes x_j \mapsto \delta_{ij} x_i$;
$$
\alpha\circ(\iota_{\End(\Bbbk^r)} \otimes 1) - 1 \otimes \Tr_{\mathfrak{h}}\otimes 1 = 0 , 
$$
as a map $\Bbbk \otimes \mathfrak h \otimes M \to M$, where $\iota_{\End(\Bbbk^r)}$ is the unit map of $\End(\Bbbk^r)$ and $\Tr_{\mathfrak{h}}: \mathfrak h \to \Bbbk$ is the trace, the interpolation of the map $x_i \mapsto 1$;
$$
y \circ (1\otimes x) -x \circ (1\otimes y) \circ (\sigma \otimes 1) = t \cdot {\rm ev}_{\mathfrak h} \otimes 1 - k \cdot ({\rm ev}_{\mathfrak h}\otimes 1) \circ( \Omega_{\End(\Bbbk^r)}^{3} - \Omega_{\End(\Bbbk^r)}^{1,3}) , 
$$
as a map  $\mathfrak h \otimes \mathfrak h^* \otimes M$ to $M$, 
where $\Omega_{\End(\Bbbk^r)}$ is a central element from Construction \ref{extcentel}, and indices indicate the spaces on which $\Omega_{\End(\Bbbk^r)}$ acts in the tensor product $\mathfrak h \otimes \mathfrak h^* \otimes M$.

The morphisms of $\Rep(H_{t,k}(\nu))$ are the morphisms of $\IND(\Rep(S_\nu))$ which commute with the action-maps $x$,$y$ and $\alpha$.
\end{def0}

\begin{rem}
Some comments are in order to explain why this is indeed the correct generalization of Definition \ref{extcherfin}. To see that one needs to understand that  Definition \ref{ultcherdef} above, if used in the finite rank, gives us the usual category of representations of the extended Cherednik algebra $H_{t,k}(n,r)$. Indeed, note that since $M$ is already an object of the category of representations of symmetric group, we do not need to define its action. Now maps $x$ and $y$ determines the action of elements $x_i$ and $y_i$. The map $\alpha$ determines the action of elements $(g)_i$. The first two formulas tell us that $x_i$ commute with each other and so also $y_i$. The third formula gives us the commutation relation between $(g)_i$ and $(h)_j$ (i.e. $[(g)_i,(h)_j] = \delta_{ij}([g,h])_i$). The fourth tells us that all $(1)_i$ act trivially. And, finally, the fifth formula, if expanded, gives us the correct commutation relation between $x_i$ and $y_j$.
\end{rem}

Now we would like to show how we can construct some of the objects of  the category $\Rep(H_{t,k}(\nu,r))$ as ultraproducts.

\begin{rem}
Below we will denote by $t_n,k_n$ the elements of $\Fpn$  such that $\prodF t_n = t$ and $\prodF k_n = k$ under the fixed isomorphism of $\prodF \Fpn \simeq \mathbb C$. We will use a similar notation for all other parameters of algebras used in the paper.
\end{rem}

\begin{lemma} \label{thmcherault}
Suppose $M_n$ is a sequence of objects of ${\bf Rep}_{p_n}(H_{{t_n},{k_n}}(\nu_n,r))$ such that their (restricted) ultraproduct as objects of ${\bf Rep}_{p_n}(S_{\nu_n})$ lies in ${\rm IND}({\rm Rep}(S_\nu))$. Suppose $x_n$, $y_n$ and $\alpha_n$ are the maps which define the action of generators of the corresponding Cherednik algebra on $M_n$. Then $(\prodF^{C,r}M_n, \prodF x_n, \prodF y_n,\prodF \alpha_n)$ defines an object of ${\rm Rep}(H_{t,k}(\nu,r))$.
\end{lemma}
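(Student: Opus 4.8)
The strategy is exactly the one outlined in the paragraph just before the statement of Theorem \ref{ultdelthm}: each defining condition of the category $\Rep(H_{t,k}(\nu,r))$ (Definition \ref{ultcherdef}) is a polynomial identity among morphisms in $\IND(\Rep(S_\nu))$ built from $x$, $y$, $\alpha$, the symmetry $\sigma$, the structure maps of $\End(\Bbbk^r)$, and the interpolated central element $\Omega_{\End(\Bbbk^r)}$; each such identity holds for the finite-rank data $(M_n, x_n, y_n, \alpha_n)$ by hypothesis (since $M_n \in {\bf Rep}_{p_n}(H_{t_n,k_n}(\nu_n,r))$); and \lthm, applied in the ultraproduct category $\widehat{\mathcal C} = \prodF {\bf Rep}^f_{p_n}(S_{\nu_n})$, then forces the corresponding identities to hold for the ultraproduct data. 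So the plan is: (1) check that the proposed tuple is a legitimate object, i.e.\ that $\prodF x_n$, $\prodF y_n$, $\prodF \alpha_n$ are genuine morphisms in $\IND(\Rep(S_\nu))$ with the correct source and target; (2) verify that each of the five relations in Definition \ref{ultcherdef} is the ultraproduct of the same relation holding for $M_n$; (3) in particular reconcile the interpolated $\Omega_{\End(\Bbbk^r)}$ appearing in the last relation with the ultraproduct of its finite-rank avatars.

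For step (1): by hypothesis $M := \prodF^{C,r} M_n$ lies in $\IND(\Rep(S_\nu))$, say $M = \bigcup_i F^i M$ with $F^i M = \prodF^C F^i M_n$ as in Construction \ref{indconsult}. Each $x_n : \mathfrak h_n^* \otimes M_n \to M_n$ restricts, for every fixed filtration degree, to a morphism between objects of ${\bf Rep}^f_{p_n}(S_{\nu_n})$ lying (after ultraproduct) in $\Rep(S_\nu)$; taking $\prodF^C$ of these and using Remark \ref{remindmor} (compactness of the objects $\mathfrak h^* \otimes F^i M$, which are finite length) gives a well-defined element of $\Hom_{\IND(\Rep(S_\nu))}(\mathfrak h^* \otimes M, M)$, and likewise for $y$ and $\alpha$. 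Here one uses that $\mathfrak h_\nu = \prodF^C \mathfrak h_n$ and that $\mathfrak h$ is self-dual in $\Rep(S_\nu)$, plus the fact (Example \ref{catex}) that $\otimes$ in $\widehat{\mathcal C}$ is the ultraproduct of the tensor products, so that $\mathfrak h^* \otimes M_n$ has $\prodF^C$ equal to $\mathfrak h^* \otimes M$ degreewise.

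For step (2): fix any one of the five relations. Both sides are composites of finitely many basic morphisms, each of which is the ultraproduct of the corresponding finite-rank morphism — this is true for $x_n, y_n, \alpha_n$ by construction, for $\sigma$ since the symmetric structure on $\widehat{\mathcal C}$ is the ultraproduct of the symmetric structures (Example \ref{catex}), for the unit, multiplication, (co)evaluation and trace maps of $\enalg$ and of $\mathfrak h$ since these are honest morphisms in each ${\bf Rep}^f_{p_n}(S_{\nu_n})$ independent of $n$ (for $\enalg$) or given by a uniform formula (for $\mathfrak h$: $\Tr_{\mathfrak h}$ is $x_i \mapsto 1$, $\pi_{\rm diag}$ is $x_i\otimes x_j \mapsto \delta_{ij}x_i$, etc., all interpolated as in \cite{etingof2014representation}). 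Hence each side of the relation in $\IND(\Rep(S_\nu))$ equals, on each filtration piece, $\prodF^C$ of the corresponding side of the finite-rank relation. Since $M_n$ is a representation of $H_{t_n,k_n}(\nu_n,r)$, the finite-rank relation holds for almost all $n$ (it holds for all $n$ for which it makes sense, and $\chct \Fpn = p_n > \nu_n$ for almost all $n$, so the algebra is defined), and \lthm\ — applied to the first-order statement ``the two morphisms $M_n \to M_n$ agree'', i.e.\ their difference is $0$ — yields equality of the ultraproducts, hence of the two morphisms in $\IND(\Rep(S_\nu))$. Running this over all five relations finishes the verification.

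The main obstacle, and the place requiring genuine care rather than bookkeeping, is step (3): one must show that the operator $\Omega_{\End(\Bbbk^r)}$ from Construction \ref{extcentel}, which is defined intrinsically inside $\Rep(S_\nu)$ via the interpolated objects $E_2$, $\Delta_{E_2}$, $\omega$, $i_{E_2}$ and the map $\alpha$, coincides with the restriction of $\prodF$ of the finite-rank operators $\sum_{m\neq i} s_{im}\sigma_{im}$ (respectively its $(1,3)$-analog) acting on the relevant tensor factors of $\mathfrak h \otimes \mathfrak h^* \otimes M_n$. This is the analog, in the extended setting, of the identification of $\Omega$ with $\prodF \Omega_{\nu_n}$ in Definition \ref{delcentr}, but twisted by the $\End(\Bbbk^r)$-action; it follows because $E_2 = \prodF^C E_2^{(n)}$ where $E_2^{(n)}$ is the span of transpositions, and each of the maps $\Delta_{E_2}, \omega, i_{E_2}, \sigma$ is the ultraproduct of the corresponding finite-rank map (coproduct of transpositions, the inclusion $1 \mapsto \Omega_n$, the symmetrization $s_{ij}\mapsto (x_i\otimes x_j + x_j \otimes x_i)/2$), so the composite defining $\omega_{\End(\Bbbk^r)}$ is the ultraproduct of the composite that produces $\sum_{m\neq i}(\cdot)_m(\cdot)_m$-type sums in finite rank; composing with $\alpha$ twice then gives precisely the finite-rank expression. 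Once this identification is in hand, the last relation of Definition \ref{ultcherdef} is handled exactly like the other four, and the proof is complete.
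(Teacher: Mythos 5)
Your proposal is correct and follows essentially the same route as the paper, whose own proof simply observes that the data is well defined and invokes \lthm{} because $x_n$, $y_n$, $\alpha_n$ satisfy the same conditions in finite and complex rank. Your steps (1)--(3) merely spell out the details (well-definedness via Construction \ref{indconsult} and Remark \ref{remindmor}, and the identification of $\Omega_{\End(\Bbbk^r)}$ with the ultraproduct of the finite-rank operators $\sum_{m\ne i}s_{im}\sigma_{im}$) that the paper leaves implicit in its appeal to ``the same conditions''.
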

\begin{proof}
It's easy to see that the data $(\prodF^{C,r}M_n, \prodF x_n, \prodF y_n,\prodF \alpha_n)$ is well defined. Since $x_n$, $y_n$ and $\alpha_n$ satisfy the same conditions in finite rank and complex rank it follows that by \lthm${}$ this is indeed an object of $\Rep(H_{t,k}(\nu,r))$.
\end{proof}

Now we would like to construct an interpolation of the functors $\Ind_{S_{\nu_n}}^{  H_{t_n,k_n}(\nu_n,r)}$. It is possible to construct the full functor as an ultraproduct, but this functor would a priori have $\prodF \Repb_{p_n}(H_{t_n,k_n}(\nu_n,r))$ as its target category, so we would need to explain why the functor really gives us objects of $\Rep(H_{t,k}(\nu,r))$. Instead we will construct this functor explicitly, which will also show that it agrees with the ultraproduct functor when applied to objects of $\Rep(S_\nu)$.

The idea is, following the PBW theorem, to think about ``$ H_{t,k}(\nu,r)$" as ``the direct sum $\bigoplus_{i,j \ge 0}S^i(\mathfrak h^*) \otimes S^j(\mathfrak h) \otimes  (\End(\Bbbk^r))^{\otimes \nu} \otimes \mathbb C[S_\nu]$" and take the tensor product with $V \in \Rep(S_\nu)$ ``over $\mathbb C[S_\nu]$".

Before the actual construction we need to note several things.
\begin{cons}
Denote $A = \End(\Bbbk^r)$. First, since $A$ is a unital algebra with the standard filtration $F^0A = \Bbbk \cdot 1$ and $F^1A = A$, we have an induced filtration on $A^{\otimes \nu}$. Note that $A^{\otimes \nu}$ as an algebra is generated by its first filtration component $F^1A^{\otimes \nu}$. This component itself is actually a subobject of $\mathfrak h \otimes A$, more precisely to obtain it we need to throw out a subobject $\mathcal X((1)) \otimes F^0A$ from $\mathfrak h \otimes A$ (note $\mathfrak h = \mathcal X((1)) \oplus \Bbbk$). It follows that there are maps $i_{l,A}: F^lA^{\otimes \nu} \to (\mathfrak h\otimes A)^{\otimes l}$ and $\pi_{l,A}: (\mathfrak h\otimes A)^{\otimes l} \to F^lA^{\otimes \nu}$. Let us denote the multiplication map $\pi_{l+1,A} \circ 1 \otimes i_{l,A}: (\mathfrak h \otimes A) \otimes F^lA^{\otimes \nu} \to F^{l+1}A^{\otimes \nu}$ by $\mu_{l,A}$.

Also note that $S^{i+1}(\mathfrak h)$ is isomorphic to a direct summand of $\mathfrak h \otimes S^{i}(\mathfrak h)$, let's denote the corresponding inclusion and projection as $\iota_{i+1, y}$ and $\pi_{i+1,y}$ respectively. The same is true for $\mathfrak h^*$, the corresponding morphisms are $\iota_{i+1,x}$ and $\pi_{i+1,x}$.
\end{cons}

With this we can proceed to construct the induction functor.

\begin{cons} \label{inducdef}

 For an object $V \in \Rep(S_\nu)$, consider an ind-object $I_V= \oplus_{i,j\ge 0}I_{i,j}$, where $I_{i,j} = S^i(\mathfrak h^*) \otimes S^j(\mathfrak h ) \otimes A^{\otimes \nu} \otimes V$, and maps $x_V: \mathfrak h^* \otimes I_V \to I_V$, $y_V: \mathfrak h \otimes I_V \to I_V$ and $\alpha_V: (\mathfrak h \otimes A) \otimes I_V \to I_V$, which are defined as follows.

First let us define $\alpha_V|_{I_{i,j}}:(\mathfrak h \otimes A) \otimes I_{i,j} \to I_{i,j}$. We will do so by considering the action of this map on each filtration component $F^lI_{i,j} = S^i(\mathfrak h^*) \otimes S^j(\mathfrak h ) \otimes F^lA^{\otimes \nu} \otimes V$. Now we can define the action of $\alpha_V|_{F^lI_{i,j}}:(\mathfrak h \otimes A) \otimes F^lI_{i,j} \to F^{l+1}I_{i,j}$ to be equal to $(1 \otimes \mu_{l,A} \otimes 1) \circ(\sigma_{\mathfrak h \otimes A,S^i(\mathfrak h^*) \otimes S^j(\mathfrak h )}\otimes 1)$.

Now define $x_V|_{I_{i,j}}: \mathfrak h^*\otimes I_{i,j} \to I_{i+1,j}$ to be equal to $\pi_{i+1,x} \otimes 1$ for all $i,j$. Also define $y_V|_{I_{0,j}}: \mathfrak h\otimes I_{0,j} \to I_{0,j+1}$ as $\pi_{j+1,y}\otimes 1$. And lastly we define $y_V|_{I_{i,j}}: \mathfrak h \otimes I_{i,j} \to I_{i,j+1} \oplus I_{i-1,j}$ by induction in $i$ as:
\scriptsize
$$
\left[(x \otimes 1) \circ(1 \otimes y \otimes 1) \circ (\sigma_{\mathfrak h, \mathfrak{h}^*} \otimes 1) + t \cdot {\rm ev}_{\mathfrak h} \otimes 1 - k\cdot ({\rm ev}_{\mathfrak h } \otimes 1) \circ (\Omega_A^{I_{i-1,j}} - \Omega_A^{\mathfrak h, I_{i-1,j}})\right]\circ(1 \otimes \iota_{i,x} \otimes 1)  .
$$ 
\normalsize
\end{cons}

Now we would like to show that this defines an object of $\Rep( H_{t,k}(\nu,r))$. Indeed:
\begin{lemma} \label{indlemma1}
In the notations of Construction \ref{inducdef}, the tuple $(I_V,x_V,y_V,\alpha_V)$ defines an object of $\Rep( H_{t,k}(\nu,r))$.
\end{lemma}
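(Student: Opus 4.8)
The plan is to verify directly that the tuple $(I_V, x_V, y_V, \alpha_V)$ satisfies each of the five defining relations of Definition \ref{ultcherdef}. Rather than grinding through all five by brute force, I would first observe that the construction of $I_V$ and its action maps is an element-free reformulation of the classical PBW-module $\Ind_{S_n}^{H_{t,k}(n,r)}(\Bbbk^r{}^{\otimes n})$ (or rather its interpolation), so it suffices to check that each categorical identity, when specialized to finite rank $n$ in place of $\nu$, reduces to a true identity in $H_{t_n,k_n}(n,r)$. The cleanest route is to combine Lemma \ref{thmcherault} with the ultraproduct description of $\Rep(S_\nu)$: I would exhibit, for almost all $n$, finite-rank modules $M_n \in \textbf{Rep}_{p_n}(H_{t_n,k_n}(\nu_n,r))$ together with filtrations so that $\prodF^{C,r} M_n = I_V$ and $\prodF x_n = x_V$, $\prodF y_n = y_V$, $\prodF \alpha_n = \alpha_V$, and then invoke Lemma \ref{thmcherault}. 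The natural candidate is $M_n = H_{t_n,k_n}(\nu_n,r)\be$ (or $\Ind$ of a suitable module), for which all relations hold by definition; one then checks that the explicit formulas of Construction \ref{inducdef} match the ones coming from the finite-rank induction under the PBW isomorphism $\Bbbk[x]\otimes(\Bbbk[S_n]\otimes \enalg^{\otimes n})\otimes\Bbbk[y]\xrightarrow{\sim} H_{t,k}(n,r)$.

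In more detail, the key steps would be: (1) Recall from the Corollary after Proposition \ref{extcherstrep} that $H_{t_n,k_n}(n,r)$ has a PBW basis, so $H_{t_n,k_n}(n,r)\be \cong \Bbbk[x_1,\dots,x_n]\otimes(\Bbbk[S_n]\otimes\enalg^{\otimes n})\be$ as a vector space, which after symmetrizing is exactly $\bigoplus_{i,j}S^i(\mathfrak h_n^*)\otimes S^j(\mathfrak h_n)\otimes \enalg^{\otimes n}$ with the $S_n$-action baked in; tensoring over $\Bbbk[S_n]$ with $V$ and taking a suitable bifiltration by total polynomial degree gives finite-length subobjects whose restricted ultraproduct is $I_V$ by the symmetric-power machinery of Section \ref{symalgsec} and Proposition \ref{inddelcons}. (2) Identify $\alpha_V$: the action of $(g)_i$ on $H_{t_n,k_n}(n,r)\be$ is left multiplication, which on the symmetrized space is precisely the map $\mu_{l,A}$ composed with the appropriate braiding — this matches the formula for $\alpha_V$ in Construction \ref{inducdef} term by term. (3) Identify $x_V$ as left multiplication by $x_i$, which in the symmetric-algebra picture is $\pi_{i+1,x}\otimes 1$. (4) Identify $y_V$: here $y_i$ does not simply raise degree, so one uses the fifth (commutation) relation of Definition \ref{ultcherdef} to move $y_i$ past the $x$'s, which is exactly the inductive formula given for $y_V|_{I_{i,j}}$; the base case $y_V|_{I_{0,j}} = \pi_{j+1,y}\otimes 1$ corresponds to $y_i$ acting on the $y$-polynomial part, consistent with $y_i\be=\be y_i$ being already in PBW form up to terms annihilated by $\be$ on the appropriate side. (5) Conclude via Lemma \ref{thmcherault} and the fact that the finite-rank modules genuinely are $H_{t_n,k_n}(\nu_n,r)$-modules.

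I expect the main obstacle to be step (4) — reconciling the recursively-defined $y_V$ with the actual left action of $y_i$ on the induced module. In finite rank, $y_i \cdot (x\text{-monomial}) \cdot \be$ requires commuting $y_i$ all the way to the right through the polynomial ring, and the commutator $[y_i, x_j]$ produces both a scalar term (the $t$-term) and the $\sigma_{ij}$/$s_{ij}$ terms that, after symmetrization, become the $\Omega_A$-corrections; one must check carefully that the interpolated central element $\Omega_{\End(\Bbbk^r)}$ of Construction \ref{extcentel} is precisely what the sum $\sum_{j\ne i} s_{ij}\sigma_{ij}$ interpolates to when sandwiched by $\be$, and that the two occurrences $\Omega_A^{I_{i-1,j}}$ and $\Omega_A^{\mathfrak h, I_{i-1,j}}$ correctly encode the $\delta_{ij}$ versus $(1-\delta_{ij})$ cases via the projection $\pi_{\rm diag}$. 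An alternative, perhaps safer, route that avoids this bookkeeping: check the five relations directly as categorical identities, using that each is a morphism equality between objects of $\IND(\Rep(S_\nu))$ and that $\Rep(S_\nu)$ embeds faithfully in the ultraproduct category, so it is enough to verify them after specialization to generic finite $n$ — but this still ultimately reduces to the same finite-rank computation, now phrased without the restricted-ultraproduct language. Either way, once step (4) is pinned down, steps (1)–(3) and (5) are routine applications of the results already established, principally \lthm{} via Lemma \ref{thmcherault}.
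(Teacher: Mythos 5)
Your proposal is correct and matches the paper's own argument: the paper verifies the relations of Definition \ref{ultcherdef} directly (the last one holding by the very inductive definition of $y_V$) and then notes, exactly as you do, that in finite rank the construction is $H_{t_n,k_n}(\nu_n,r)\otimes_{S_{\nu_n}}V_n$, so Łoś's theorem (via Lemma \ref{thmcherault}) gives the module structure. The bookkeeping you flag in step (4) is precisely what the identification with the finite-rank induced module (formalized in Corollary \ref{indcor}) takes care of, so no new ideas are needed beyond the paper's.
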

\begin{proof}
Indeed, the first two formulas of Definition \ref{ultcherdef} are satisfied by the properties of symmetric powers, and we defined the action of $y_V$ by induction in such a way that the last equation is also satisfied. The equations for $\alpha_V$ are satisfied in a straightforward way.

Another way to see that is to note that in the finite rank case this construction amounts to $ H_{{t_n},{k_n}}(\nu_n) \otimes_{S_{\nu_n}} V_n$, and so by \lthm, we do get a correct structure of an \linebreak ``$ H_{t,k}(\nu)$-module".
\end{proof}

Now we need to construct the action of the induction functor on  morphisms.
\begin{cons} \label{indchermordef}
In the notation of Construction \ref{inducdef}, given a morphism \linebreak $\phi: V \to U$, define a  morphism $I_\phi: I_V\to I_U$ in the following way:
$$
(I_\phi)|_{S^i(\mathfrak h^*) \otimes S^j(\mathfrak h) \otimes A^{\otimes \nu} \otimes V}:=1 \otimes \phi \ .
$$
\end{cons}
\begin{lemma} \label{indlemma2}
In the notation of Constructions \ref{inducdef} and \ref{indchermordef}, $I_\phi$ is a morphism in ${\rm Rep}( H_{t,k}(\nu,r))$.
\end{lemma}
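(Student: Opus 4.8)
The plan is to show directly that $I_\phi$ intertwines the three structure maps, i.e.\ that it commutes with $x$, $y$ and $\alpha$ on $I_V$ and $I_U$. The underlying observation is that each of $x_V$, $y_V$, $\alpha_V$ from Construction~\ref{inducdef} is \emph{natural} in $V$: it is built out of fixed morphisms of $\Rep(S_\nu)$ acting on the tensor factors $\mathfrak h^*$, $\mathfrak h$ and $A^{\otimes\nu}$ (with $A=\End(\Bbbk^r)$), tensored with the identity of $V$. Since $I_\phi$ is exactly $1\otimes\phi$ on every graded piece $S^i(\mathfrak h^*)\otimes S^j(\mathfrak h)\otimes A^{\otimes\nu}\otimes V$, the claim will follow from this naturality.

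First I would check that $I_\phi$ is a well-defined morphism in $\IND(\Rep(S_\nu))$: the map $1\otimes\phi$ preserves each filtration component $F^lI_{i,j}=S^i(\mathfrak h^*)\otimes S^j(\mathfrak h)\otimes F^lA^{\otimes\nu}\otimes V$, so it assembles into a morphism of the corresponding ind-objects. Next, compatibility with $x_V$ and $\alpha_V$ is immediate: $x_V|_{I_{i,j}}=\pi_{i+1,x}\otimes 1$ and $\alpha_V|_{F^lI_{i,j}}=(1\otimes\mu_{l,A}\otimes 1)\circ(\sigma_{\mathfrak h\otimes A,\,S^i(\mathfrak h^*)\otimes S^j(\mathfrak h)}\otimes 1)$ are both of the form (morphism not involving $V$)$\,\otimes\,1_V$, hence commute with $1\otimes\phi$ on the nose.

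The one point needing care is $y_V$, whose definition is recursive. For $i=0$ we have $y_V|_{I_{0,j}}=\pi_{j+1,y}\otimes 1$, which again does not touch the $V$-factor, so the base case is clear. For $i>0$ I would argue by induction on $i$: the defining formula for $y_V|_{I_{i,j}}$ is a composite of $\iota_{i,x}$, $\mathrm{ev}_{\mathfrak h}$, $x_V$, the recursively-defined $y_V|_{I_{i-1,j}}$, and the operators $\Omega_A^{I_{i-1,j}}$ and $\Omega_A^{\mathfrak h,I_{i-1,j}}$ from Construction~\ref{extcentel}. All of $\iota_{i,x}$, $\mathrm{ev}_{\mathfrak h}$, $x_V$ act trivially on the $V$-factor; $y_V|_{I_{i-1,j}}$ commutes with $1\otimes\phi$ by the inductive hypothesis; and $\Omega_A^{I_{i-1,j}}$ (resp.\ $\Omega_A^{\mathfrak h,I_{i-1,j}}$) is assembled from $\alpha_V$ (applied to $I_{i-1,j}$) together with the fixed maps $i_{E_2}$, $\Delta_{E_2}$, $\omega$, $coev_A$ and various braidings, none of which involve $V$, so it commutes with $1\otimes\phi$ because $\alpha_V$ does. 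Hence the whole composite commutes with $1\otimes\phi$, completing the induction and the proof.

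I expect the main obstacle to be bookkeeping rather than anything conceptual: one must unwind the nested definition of $y_V$ and confirm that the "upgraded central elements" $\Omega_A^{I_{i-1,j}}$ applied to the ind-objects $I_{i-1,j}$ are themselves natural in $V$ — which, as noted, reduces to the naturality of $\alpha_V$ already established. As a sanity check (and an alternative proof when $V$ arises as a restricted ultraproduct), one may note that in finite rank $I_{V_n}=H_{t_n,k_n}(\nu_n,r)\otimes_{S_{\nu_n}}V_n$ and $I_{\phi_n}=1\otimes\phi_n$ is tautologically a module homomorphism, so $I_\phi$ is a morphism of $\Rep(H_{t,k}(\nu,r))$ by \lthm; the direct argument above has the advantage of applying to every object $V$ of $\Rep(S_\nu)$.
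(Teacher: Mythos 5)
Your proposal is correct and follows essentially the same route as the paper, which simply notes that the claim can be seen either directly from the definition or via the ultraproduct argument (since in finite rank $I_{\phi_n}$ is tautologically a module morphism); you carry out the direct definitional check in detail, including the inductive step for $y_V$, and mention the ultraproduct argument as the alternative, exactly as the paper does.
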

\begin{proof}
This is easy to see both straight from the definition, or by the ultraproduct argument, since in finite rank this defines an actual $ H_{t_n,k_n}(\nu_n,r)$-module morphism.
\end{proof}

Now we can define the actual functor:
\begin{def0} \label{indfunctor}
Define a functor $\Ind_{S_\nu}^{ H_{t,k}(\nu,r)}: {\rm Rep}(S_\nu) \to \Rep(H_{t,k}(\nu,r))$ in the following way. On objects it takes $V$ to the triple $(I_V,x_V,y_V,\alpha_V)$ from Construction \ref{inducdef}. And on morphisms it takes $\phi: V \to U$ to $I_\phi$ from Construction \ref{indchermordef}. This is a well defined functor by Lemmas \ref{indlemma1} and \ref{indlemma2}.
\end{def0}
The next Corollary follows by construction and the above lemmas:
\begin{cor} \label{indcor}
For any object $V \in {\rm Rep}(S_\nu)$ such that $V = \prodF V_n$ we have:
$$
{\rm Ind}_{S_\nu}^{ H_{t,k}(\nu,r)}V = \prodF^{C,r} {\rm Ind}_{S_{\nu_n}}^{ H_{{t_n},{k_n}}(\nu_n,r)}V_n  ,
$$
where the filtration on ${\rm Ind}_{S_{\nu_n}}^{ H_{{t_n},{k_n}}(\nu_n,r)}V_n$ is obtained from the vector space bifiltration of $H_{{t_n},{k_n}}(\nu_n,r)$ (which can be seen to be $S_{\nu_n}$-invariant).
\end{cor}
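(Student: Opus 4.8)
I will write $A=\End(\Bbbk^r)$ throughout. The plan is to prove the identification in two stages — first matching the underlying ind-object of $\Rep(S_\nu)$, then matching the three structure maps $x$, $y$, $\alpha$ (and the action on morphisms) — and to conclude via Lemma \ref{thmcherault}, which already guarantees that an ultraproduct of finite-rank modules is an object of $\Rep(H_{t,k}(\nu,r))$. The key finite-rank input is the PBW property of $H_{t_n,k_n}(\nu_n,r)$ (the corollary following Proposition \ref{extcherstrep}): choosing a PBW ordering in which the group algebra is the rightmost tensor factor, the induced module $\Ind_{S_{\nu_n}}^{H_{t_n,k_n}(\nu_n,r)}V_n = H_{t_n,k_n}(\nu_n,r)\otimes_{\Bbbk[S_{\nu_n}]}V_n$ becomes, as a filtered vector space,
$$
\Ind_{S_{\nu_n}}^{H_{t_n,k_n}(\nu_n,r)}V_n \;\cong\; \Bbbk[x_1,\dots,x_{\nu_n}]\otimes\Bbbk[y_1,\dots,y_{\nu_n}]\otimes A^{\otimes \nu_n}\otimes V_n .
$$
As recorded in the statement, the vector space bifiltration of $H_{t_n,k_n}(\nu_n,r)$ is $S_{\nu_n}$-stable, hence descends to the left-hand side; under this identification it is cofinal with the tensor-product filtration built from the total-degree filtrations on $\Bbbk[x_1,\dots,x_{\nu_n}]$ and $\Bbbk[y_1,\dots,y_{\nu_n}]$ and the unital filtration on $A^{\otimes \nu_n}$ coming from $F^0A=\Bbbk\cdot\Id$, so by the remark following Definition \ref{restrcatdef} the restricted ultraproduct may be computed with respect to the latter.

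For the underlying ind-object I would therefore compute the restricted ultraproduct of the right-hand side factorwise, using that the restricted ultraproduct distributes over a tensor product of filtered objects (the grading description of Construction \ref{indconsult} and the remarks after Definition \ref{restrcatdef}). Here $\prodF^{C,r}A^{\otimes \nu_n}=A^{\otimes\nu}$ is exactly Proposition \ref{compow}; $\prodF^{C,r}\Bbbk[x_1,\dots,x_{\nu_n}]=S^\bullet(\mathfrak h^*)$ and $\prodF^{C,r}\Bbbk[y_1,\dots,y_{\nu_n}]=S^\bullet(\mathfrak h)$ follow from the interpolation of the symmetric algebra of the permutation representation, i.e. $\prodF^C S^j(\mathfrak h_{\nu_n})=S^j(\mathfrak h)$ (Theorem \ref{ultdelthm}, Proposition \ref{simpprop}, and \lthm${}$ applied to the symmetric-power functor); and $\prodF V_n=V$. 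Hence the part of the restricted ultraproduct of $x$-degree $i$ and $y$-degree $j$ is $S^i(\mathfrak h^*)\otimes S^j(\mathfrak h)\otimes A^{\otimes\nu}\otimes V=I_{i,j}$, and each of its filtration components $\prodF^C F^l I_{i,j}$ lies in $\Rep(S_\nu)$ since $S^i(\mathfrak h^*)\otimes S^j(\mathfrak h)\otimes F^l A^{\otimes\nu}$ is a genuine object of $\Rep(S_\nu)$ and $V\in\Rep(S_\nu)$. Thus $\prodF^{C,r}\Ind_{S_{\nu_n}}^{H_{t_n,k_n}(\nu_n,r)}V_n$ has underlying ind-object $\bigoplus_{i,j\ge 0}I_{i,j}=I_V$.

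It remains to match the structure maps. By Lemma \ref{thmcherault} the tuple $(\prodF^{C,r}\Ind V_n,\ \prodF x_n,\ \prodF y_n,\ \prodF\alpha_n)$ is automatically an object of $\Rep(H_{t,k}(\nu,r))$, so under the identification above it suffices to check $\prodF x_n=x_V$, $\prodF y_n=y_V$, $\prodF\alpha_n=\alpha_V$. This is precisely the comparison already used in the proof of Lemma \ref{indlemma1}: $x_V$ and $\alpha_V$ are, by their definition in Construction \ref{inducdef}, nothing but ``insert the element of $\mathfrak h^*$ (resp.\ of $\mathfrak h\otimes A$) into the appropriate PBW tensor factor'', which is a first-order, $n$-uniform description of the action of $x_i$ and $(g)_i$ on $\Ind V_n$; and $y_V$ was defined by the recursion implementing the normal-ordering of $y_i$ past the $x$'s via $[y_i,x_j]=\delta_{ij}(t-k\sum_{m\ne i}s_{im}\sigma_{im})+(1-\delta_{ij})ks_{ij}\sigma_{ij}$, the $\sigma$-corrections being interpolated by the terms $\Omega_A^{I_{i-1,j}}-\Omega_A^{\mathfrak h,I_{i-1,j}}$ of Construction \ref{inducdef} (through the central element of Construction \ref{extcentel}). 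Since all of these descriptions are uniform in $n$, \lthm${}$ yields the three equalities, and the same remark applied to Construction \ref{indchermordef} gives the agreement on morphisms: a morphism $\phi:V\to U$ in $\Rep(S_\nu)$ being itself of the form $\prodF\phi_n$, the map $I_\phi$ corresponds to $\prodF(1\otimes\phi_n)$. This is exactly the assertion of the Corollary.

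The main obstacle is the bookkeeping hidden in the previous paragraph: one must check, term by term, that the recursive definition of $y_V$ in Construction \ref{inducdef} — the filtration degrees by which $x$ and $y$ shift, and the precise shape of the central correction $\Omega_A$ from Construction \ref{extcentel} — faithfully reproduces the effect of moving $y_i$ to the right in the PBW basis of $\Ind V_n$, so that \lthm${}$ can legitimately be applied to it. The $x$- and $\alpha$-parts, as well as the identification of the ind-objects, are routine applications of the restricted-ultraproduct formalism of Section 2, together with the $S_{\nu_n}$-invariance of the vector space bifiltration of $H_{t_n,k_n}(\nu_n,r)$ that is needed for the filtration in the statement to be well defined.
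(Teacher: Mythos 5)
Your proposal is correct and follows essentially the same route as the paper: the paper's (very terse) argument is that the Corollary ``follows by construction and the above lemmas,'' i.e.\ the ind-object $I_V=\bigoplus_{i,j}S^i(\mathfrak h^*)\otimes S^j(\mathfrak h)\otimes A^{\otimes\nu}\otimes V$ of Construction \ref{inducdef} was built precisely to interpolate the PBW form of $H_{t_n,k_n}(\nu_n,r)\otimes_{\Bbbk[S_{\nu_n}]}V_n$, and the agreement of the structure maps $x,y,\alpha$ (and of $I_\phi$ on morphisms) is the same Łoś-type comparison already invoked in the proofs of Lemmas \ref{indlemma1} and \ref{indlemma2}. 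You simply spell out the details — the $S_{\nu_n}$-equivariant PBW identification, cofinality of the filtrations, and the factorwise restricted ultraproduct — which the paper leaves implicit.
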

\section{DDCA in complex rank}

In this section we will define and study the Deformed Double Current Algebra of rank $r$. Our train of thought will resemble that of \cite{etingof2020new} where the case of $r=1$ was studied.

\subsection{Definition}

Now we can define the Deformed Double Current algebra of rank $r$. We will construct it as an algebra of endomorphisms in $\Rep(H_{t,k}(\nu,r))$.

\begin{def0}
For $r \in \mathbb Z_{> 0}$, $\nu \in \mathbb C \backslash \mathbb Z_{\ge 0}$ and $t,k \in \mathbb C$, define the DDCA of rank $r$, denoted $\widetilde{\mathcal D}_{t,k,\nu}(r)$, as:
$$
\widetilde{\mathcal D}_{t,k,\nu}(r) = \End_{\Rep(H_{t,k}(\nu,r))}(\Ind_{S_\nu}^{H_{t,k}(\nu,r)}(\mathbb C)) \ .
$$
\end{def0}

This is obviously an interpolation of Remark \ref{sphfinrem}. This can be made precise using Corollary \ref{indcor}:
\begin{prop}
The algebra $\widetilde{\mathcal D}_{t,k,\nu}(r)$ is equal to a restricted ultraproduct of $B_{t_n,k_n}(\nu_n,r)$ with respect to the total filtration.
\end{prop}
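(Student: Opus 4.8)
The plan is to unpack both sides through the ultraproduct descriptions already established and match them term by term. First I would recall that by Corollary \ref{indcor}, we have $\Ind_{S_\nu}^{H_{t,k}(\nu,r)}(\mathbb C) = \prodF^{C,r} \Ind_{S_{\nu_n}}^{H_{t_n,k_n}(\nu_n,r)}(\mathbb C)$, with the filtration coming from the vector-space bifiltration on $H_{t_n,k_n}(\nu_n,r)$. By Remark \ref{sphfinrem}, in finite rank we have $\Ind_{S_{\nu_n}}^{H_{t_n,k_n}(\nu_n,r)}(\mathbb C) = H_{t_n,k_n}(\nu_n,r)\be$ as an $H_{t_n,k_n}(\nu_n,r)$-module, and $\End_{H_{t_n,k_n}(\nu_n,r)}(H_{t_n,k_n}(\nu_n,r)\be) = B_{t_n,k_n}(\nu_n,r)$.

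Next I would compute $\End_{\Rep(H_{t,k}(\nu,r))}$ of the restricted-ultraproduct object using Remark \ref{remindmor}: since $\Ind_{S_\nu}^{H_{t,k}(\nu,r)}(\mathbb C)$ is built as a colimit of objects $F^lI_{\mathbb C}$ lying in $\Rep(S_\nu)$ (hence of finite length), and each $F^lI_{\mathbb C}$ is compact in $\IND(\Rep(S_\nu))$, the endomorphism algebra in $\IND(\Rep(S_\nu))$ of this object is $\varprojlim_l \bigcup_j \Hom_{\Rep(S_\nu)}(F^lI_{\mathbb C}, F^jI_{\mathbb C})$, which by the remark following Proposition \ref{inddelcons} equals ${\prodF^r}\Hom_{\Repb_{p_n}(S_{\nu_n})}((I_{\mathbb C})_n,(I_{\mathbb C})_n)$, the restricted ultraproduct taken with respect to the bifiltration. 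Then I would cut down to the subalgebra of those endomorphisms commuting with $x_V$, $y_V$ and $\alpha_V$: because commuting with a fixed finite collection of morphisms is a first-order condition, \lthm\ identifies the subalgebra of $H_{t,k}(\nu,r)$-module endomorphisms of $\prodF^{C,r}(I_{\mathbb C})_n$ with $\prodF^r$ of the subalgebras of $H_{t_n,k_n}(\nu_n,r)$-module endomorphisms of $(I_{\mathbb C})_n = H_{t_n,k_n}(\nu_n,r)\be$. Combining, $\widetilde{\mathcal D}_{t,k,\nu}(r) = \prodF^r B_{t_n,k_n}(\nu_n,r)$, with the filtration the restriction of the bifiltration on $H_{t_n,k_n}(\nu_n,r)$, i.e.\ the total filtration of the bifiltered algebra $B_{t_n,k_n}(\nu_n,r)$ described in Section 4.1.

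The main obstacle I anticipate is the bookkeeping around filtrations: one must check that the filtration on $\Ind_{S_{\nu_n}}^{H_{t_n,k_n}(\nu_n,r)}(\mathbb C)$ coming from Corollary \ref{indcor} induces, on its endomorphism algebra, exactly the total filtration of the bifiltered algebra $B_{t_n,k_n}(\nu_n,r)$ — and that this is the filtration implicitly used when forming the restricted ultraproduct. Here I would invoke the remark that restricted ultraproducts with respect to equivalent filtrations are naturally isomorphic, and identify $\End_{H_{t_n,k_n}(\nu_n,r)}(H_{t_n,k_n}(\nu_n,r)\be)$ with $\be H_{t_n,k_n}(\nu_n,r)\be$ via evaluation at $\be$, under which the Hom-filtration matches the restriction of the vector-space bifiltration on $H_{t_n,k_n}(\nu_n,r)$ to $B_{t_n,k_n}(\nu_n,r)$. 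Everything else is a direct transfer via \lthm, so once the filtrations are reconciled the proposition follows. A subtle secondary point worth a line is confirming that the bifiltration components of $H_{t_n,k_n}(\nu_n,r)$ are indeed $S_{\nu_n}$-invariant (already noted in Corollary \ref{indcor}) so that the restricted ultraproduct lands in $\IND(\Rep(S_\nu))$ and the Hom-computation above is legitimate.
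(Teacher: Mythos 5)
There is a genuine gap at the step where you compute the full endomorphism algebra of the ind-object. You write that
$\End_{\IND(\Rep(S_\nu))}(\Ind_{S_\nu}^{H_{t,k}(\nu,r)}(\mathbb C)) = \varprojlim_l \bigcup_j \Hom_{\Rep(S_\nu)}(F^lI_{\mathbb C},F^jI_{\mathbb C})$ equals $\prodF^r\End_{\Repb_{p_n}(S_{\nu_n})}((I_{\mathbb C})_n)$ ``by the remark following Proposition \ref{inddelcons}''. That remark only applies when the source object has finite length (is compact), which is exactly what fails here: $I_{\mathbb C}$ has infinite length. The inverse limit on the left contains compatible families $(\phi_l)$ with $\phi_l: F^lI_{\mathbb C}\to F^{j(l)}I_{\mathbb C}$ where the shift $j(l)-l$ may grow without bound, and such endomorphisms do not come from the restricted ultraproduct, which by construction consists only of sequences of endomorphisms of uniformly bounded filtration degree. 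So the intermediate equality is unjustified (and false in general); after cutting down to the commutant of $x_V,y_V,\alpha_V$ your argument only yields an inclusion $\prodF^r B_{t_n,k_n}(\nu_n,r)\subseteq \widetilde{\mathcal D}_{t,k,\nu}(r)$, not the equality claimed, unless you separately show that every $H_{t,k}(\nu,r)$-equivariant endomorphism has bounded filtration shift.

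The missing ingredient is precisely the interpolation of Remark \ref{sphfinrem} (Frobenius reciprocity): an equivariant endomorphism of $\Ind_{S_\nu}^{H_{t,k}(\nu,r)}(\mathbb C)$ is determined by its restriction to the generating copy of $\mathbb C$, i.e.
$\End_{\Rep(H_{t,k}(\nu,r))}(\Ind_{S_\nu}^{H_{t,k}(\nu,r)}(\mathbb C)) = \Hom_{\Rep(S_\nu)}(\mathbb C,\Ind_{S_\nu}^{H_{t,k}(\nu,r)}(\mathbb C))$. This is the paper's route: it reduces the computation to a Hom space out of the compact object $\mathbb C$, where the remark after Proposition \ref{inddelcons} genuinely applies, and then Corollary \ref{indcor} immediately gives
$\prodF^r\Hom_{\Repb_{p_n}(S_{\nu_n})}(\Fpn,\Ind_{S_{\nu_n}}^{H_{t_n,k_n}(\nu_n,r)}(\Fpn)) = \prodF^r B_{t_n,k_n}(\nu_n,r)$, with the filtration matching the total filtration as you correctly discuss at the end. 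If you insert this adjunction step (or an equivalent boundedness argument for equivariant endomorphisms) before invoking the Hom-computation, the rest of your filtration bookkeeping goes through.
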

\begin{proof}
Indeed since 
$$
\widetilde{\mathcal D}_{t,k,\nu}(r)= \End_{\Rep(H_{t,k}(\nu,r))}(\Ind_{S_\nu}^{H_{t,k}(\nu,r)}(\mathbb C)) = \Hom_{\Rep(S_\nu)}(\mathbb C,\Ind_{S_\nu}^{H_{t,k}(\nu,r)}(\mathbb C) )
$$
by Corollary \ref{indcor}, it follows that:
$$
\widetilde{\mathcal D}_{t,k,\nu}(r)= \prodF^r \Hom_{\Repb_{p_n}}(\Fpn, \Ind_{S_{\nu_n}}^{H_{t_n,k_n}(\nu_n,r)}(\Fpn)) = \prodF^r B_{t_n,k_n}(\nu_n,r) \ .
$$
\end{proof}

\begin{rem}
Note that $\widetilde{\mathcal D}_{t,k,\nu}(r)$ has a vector space bifiltration which it inherits through the ultraproduct construction from the similar filtrations on $B_{t_n,k_n}(\nu_n,r)$.
\end{rem}

\begin{rem}
Also note that all of the above can be repeated \textit{verbatim} for the case of $\Rep^{\rm{ext}}(S_\nu)$ from Definition \ref{extdef}. In this case we obtain the algebra $\widetilde{\mathcal D}^{\rm{ext}}_{t,k,\nu}(r)$ over $\Cext$.
\end{rem}

\subsection{Basis of DDCA}

In this subsection we would like to generalize the elements from the Section \ref{gensetsect} to the DDCA.

\begin{cons}
Consider elements $T_{\nu_n}(\bold{m})$ of $B_{t_n,k_n}(\nu_n,r)$ as maps belonging to $\Hom_{\Repb_{p_n}(S_{\nu_n})}(\Fpn, H_{t_n,k_n}(\nu_n,r)\bold{e})$. Since these elements are defined for big enough characteristic, they are defined \fan. And since their degree as maps is bounded, it follows that the ultraproduct $T(\bold{m}) = \prodF T_{\nu_n}(\bold{m})$ is a well-defined element of $\widetilde{\mathcal D}_{t,k,\nu}(r)$. And the same hold for $\widetilde{\mathcal D}^{\rm{ext}}_{t,k,\nu}(r)$.
\end{cons}

Now let us consider the associated graded of $\widetilde{\mathcal D}_{t,k,\nu}(r)$ with respect to the vertical filtration.

\begin{prop}
The associated graded algebra ${\rm gr}_v(\widetilde{\mathcal D}_{t,k,\nu}(r))$ is isomorphic to \linebreak $S^{\nu}(\enalg[x,y])$ as a bifiltered algebra.
\end{prop}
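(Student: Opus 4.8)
The plan is to show that passing to the associated graded with respect to the vertical filtration commutes with the restricted ultraproduct defining $\widetilde{\mathcal D}_{t,k,\nu}(r)$, and then to combine this with the finite-rank identification ${\rm gr}_v(B_{t_n,k_n}(\nu_n,r)) \simeq S^{\nu_n}(\enalg[x,y])$ of bifiltered algebras recalled above, together with the construction of $S^\nu(\enalg[x,y])$ in Remark \ref{remsymalg}. Write $B_n := B_{t_n,k_n}(\nu_n,r)$ with its vector space bifiltration $F^{\bullet,\bullet}B_n$; recall that $\widetilde{\mathcal D}_{t,k,\nu}(r)$ is the restricted ultraproduct of the $B_n$ with respect to the total filtration, so $\widetilde{\mathcal D}_{t,k,\nu}(r) = \bigcup_{i,j}\prodF F^{i,j}B_n$ with $F^{i,j}\widetilde{\mathcal D}_{t,k,\nu}(r) = \prodF F^{i,j}B_n$. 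The basic input — the same finiteness that makes this restricted ultraproduct well posed, see Section \ref{symalgsec} — is that under the identification ${\rm gr}_v(B_n) \simeq S^{\nu_n}(\enalg[x,y])$ the component $F^{i,j}$ of the associated graded is spanned by symmetric tensors with at most $i$ non-scalar slots and total $(x,y)$-degree at most $j$, so that $\dim F^{i,j}B_n$ and $\dim\bigl(F^{i,j}B_n/F^{i,j-1}B_n\bigr)$ are finite and independent of $n$ for $n$ large.

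Since the bifiltration of $B_n$ is compatible, $F^{i,j}B_n\cap F_v^{j-1}B_n = F^{i,j-1}B_n$, so the $i$-th step of the horizontal filtration induced on ${\rm gr}_v^j(B_n) = F_v^jB_n/F_v^{j-1}B_n$ is $F^{i,j}B_n/F^{i,j-1}B_n$. Because $\prodF$ is exact on finite-dimensional vector spaces (a case of $\lthm$),
$$
\frac{F^{i,j}\widetilde{\mathcal D}_{t,k,\nu}(r)}{F^{i,j-1}\widetilde{\mathcal D}_{t,k,\nu}(r)} \;=\; \frac{\prodF F^{i,j}B_n}{\prodF F^{i,j-1}B_n} \;=\; \prodF\bigl(F^{i,j}B_n/F^{i,j-1}B_n\bigr),
$$
and taking unions over $i$ and direct sums over $j$ produces a vector space isomorphism ${\rm gr}_v(\widetilde{\mathcal D}_{t,k,\nu}(r)) \simeq \prodF^r{\rm gr}_v(B_n)$, the restricted ultraproduct on the right being taken with respect to the (total) bifiltration on ${\rm gr}_v(B_n)$. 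A further application of $\lthm$ to the multiplication maps $F^{i,j}B_n\otimes F^{i',j'}B_n \to F^{i+i',j+j'}B_n$ and to the horizontal inclusions upgrades this to an isomorphism of bifiltered algebras. Finally, ${\rm gr}_v(B_n)$ is, as a bifiltered algebra, equal to $S^{\nu_n}(\enalg[x,y])$, and, by Remark \ref{remsymalg} applied with $A_{\mathbb Z} = \End(\mathbb Z^r)[x,y]$, one has $\prodF^r S^{\nu_n}(\enalg[x,y]) = S^\nu(\enalg[x,y])$; therefore
$$
{\rm gr}_v(\widetilde{\mathcal D}_{t,k,\nu}(r)) \;\simeq\; \prodF^r S^{\nu_n}(\enalg[x,y]) \;=\; S^\nu(\enalg[x,y])
$$
as bifiltered algebras.

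The routine ingredients here are the several appeals to $\lthm$. The one step that genuinely needs care is the commutation of ${\rm gr}_v$ with the restricted ultraproduct: it rests on the finite-dimensionality and eventual $n$-independence of the spaces $F^{i,j}B_n$ and of the subquotients $F^{i,j}B_n/F^{i,j-1}B_n$ — but this is precisely the property underlying the definition of $S^\nu(\enalg[x,y])$, so no new estimate is needed, only a careful bookkeeping of the bifiltration pieces. It is also worth noting that the finite-rank identification ${\rm gr}_v(B_n) = S^{\nu_n}(\enalg[x,y])$ is the canonical one coming from the PBW theorem for the spherical subalgebra, i.e.\ a literal equality of bifiltered algebras rather than a choice-dependent isomorphism, so it survives the ultraproduct without coherence issues. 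Equivalently, one could view ${\rm gr}_v(B_n) \simeq S^{\nu_n}(\enalg[x,y])$ as the vertical degeneration of $B_n$ and take the ultraproduct of that statement directly, but tracking the bifiltration components as above keeps the bifiltered structure manifest.
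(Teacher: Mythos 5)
Your proposal is correct and follows the same route as the paper: identify ${\rm gr}_v(\widetilde{\mathcal D}_{t,k,\nu}(r))$ with $\prodF^r {\rm gr}_v(B_{t_n,k_n}(\nu_n,r))$, then use ${\rm gr}_v(B_{t_n,k_n}(\nu_n,r)) \simeq S^{\nu_n}(\enalg[x,y])$ and Remark \ref{remsymalg} to get $S^\nu(\enalg[x,y])$. The only difference is that you spell out the commutation of ${\rm gr}_v$ with the restricted ultraproduct (via compatibility of the bifiltration and exactness of $\prodF$ on the stabilizing finite-dimensional pieces), a step the paper asserts without detail.
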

\begin{proof}
We have ${\rm gr}_v(\widetilde{\mathcal D}_{t,k,\nu}(r)) = \prodF^r {\rm gr}_v(B_{t_n,k_n}(\nu_n,r))$. But since we know what the associated graded of the spherical subalgebra is, it follows that:
$$
{\rm gr}_v(\widetilde{\mathcal D}_{t,k,\nu}(r)) = \prodF^r S^{\nu_n}(\End(\Fpn^r)[x,y]) = S^\nu(\enalg[x,y]) \ .
$$
\end{proof}

By Remark \ref{remsymalg} it follows that there is an isomorphism:
$$
\widetilde{\Delta}: U(\enalg[x,y])/(1_{\enalg}- \nu) \simeq {\rm gr}_v(\widetilde{\mathcal D}_{t,k,\nu}(r)) \ .
$$

From this we can derive the following statement about the basis of $\widetilde{\mathcal D}_{t,k,\nu}(r)$.
\begin{prop}
The set $\{T(\bold{m})\}$ for all $\bold{m}$ such that for all $(r,q,l) \ne (0,0,1)$  we have $m_{r,q,l} \in \mathbb Z_{\ge 0}$ and $m_{0,0,1}=0$, forms a basis of $\widetilde{\mathcal D}_{t,k,\nu}(r)$ (and of $\widetilde{\mathcal D}^{\rm{ext}}_{t,k,\nu}(r)$).
\end{prop}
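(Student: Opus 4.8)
The plan is to pass to the vertical associated graded and match the leading symbols of the $T(\bold{m})$ with a PBW-type basis. Set $A=\enalg[x,y]$; as a filtered unital algebra it has the complement $A'=\mathrm{span}\{\alpha_l x^ry^q:(r,q,l)\neq(0,0,1)\}$ to $\Bbbk\cdot 1_A=\Bbbk\cdot\alpha_1$, so $\{\alpha_l x^ry^q\}$ is a basis of $A$ and those with $(r,q,l)\neq(0,0,1)$ form a basis of $A'$. By the identification $\mathrm{gr}_v(\widetilde{\mathcal D}_{t,k,\nu}(r))\simeq S^\nu(A)$ established above together with Remark~\ref{remsymalg} we have the filtered isomorphism $\widetilde\Delta\colon U(A)/(1_A-\nu)\xrightarrow{\ \sim\ }\mathrm{gr}_v(\widetilde{\mathcal D}_{t,k,\nu}(r))$ sending the class of $v\in A$ to $\delta(v)$; and by the PBW theorem the ordered monomials $\prod_{(r,q,l)\neq(0,0,1)}(\alpha_l x^ry^q)^{m_{r,q,l}}$ --- those indexed by $\bold{m}$ with $m_{0,0,1}=0$ --- form a basis of $U(A)/(1_A-\nu)$, since the relation $1_A=\nu$ removes exactly the generator $\alpha_1=\alpha_1 x^0y^0$.

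First I would compute the image of $T(\bold{m})$ in $\mathrm{gr}_v(\widetilde{\mathcal D}_{t,k,\nu}(r))$. Each $T_{r,q,n}(\alpha_l)$ has vertical-leading term $\sum_i(\alpha_l)_ix_i^ry_i^q=\delta_n(\alpha_l x^ry^q)$; the vertical filtration is an algebra filtration; and, as recorded above, $T_n(\bold{m})$ equals $\prod_{r,q,l}T_{r,q,n}(\alpha_l)^{m_{r,q,l}}$ modulo lower order in the total filtration (more precisely $T_n(\bold m)$ is the average of this product over all orderings of the factors). Hence the image of $T_n(\bold{m})$ in $\mathrm{gr}_v(B_{t_n,k_n}(\nu_n,r))\simeq S^{\nu_n}(A_n)$ is the corresponding product of the elements $\delta_n(\alpha_l x^ry^q)$, i.e.\ $\Delta_n$ (Definition~\ref{Deltadef}) applied to the monomial attached to $\bold m$ in $U(A_n)$, which equals $\widetilde\Delta_n$ of its nonzero class in $U(A_n)/(1_{A_n}-\nu_n)$; in particular the vertical degree of $T_n(\bold{m})$ is exactly $w(\bold{m})$. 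Passing to the ultraproduct, the image of $T(\bold{m})$ in $\mathrm{gr}_v(\widetilde{\mathcal D}_{t,k,\nu}(r))$ is $\widetilde\Delta$ of the class of that monomial. This is also where the normalization $m_{0,0,1}=0$ is natural: since $T_{0,0,n}(\alpha_1)=n\be$, allowing $m_{0,0,1}>0$ merely rescales $T(\bold{m})$ by a power of the scalar $\nu$.

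Consequently the images $\{$image of $T(\bold{m})$ in $\mathrm{gr}_v:m_{0,0,1}=0\}$ are, under $\widetilde\Delta^{-1}$, precisely the (symmetrized) ordered monomials in the basis of $A'$; these have the same PBW-leading terms as the honest ordered monomials, hence form a basis of $U(A)/(1_A-\nu)$, and therefore a basis of $\mathrm{gr}_v(\widetilde{\mathcal D}_{t,k,\nu}(r))$ respecting the grading --- so $\{$image of $T(\bold{m}):w(\bold{m})=d\}$ is a basis of the component of vertical degree $d$ for every $d$. The standard filtered-to-graded argument then finishes the proof: the vertical filtration on $\widetilde{\mathcal D}_{t,k,\nu}(r)$ is exhaustive, bounded below, and has finite-dimensional components (it is the restricted ultraproduct of the vector-space filtrations of the $B_{t_n,k_n}(\nu_n,r)$), so a family of filtered elements whose symbols form a basis of the associated graded is itself a basis. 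As none of this uses any special feature of the ground field, the same argument over $\overline{\mathbb C(\nu)}$ yields the statement for $\widetilde{\mathcal D}^{\rm ext}_{t,k,\nu}(r)$.

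The step I expect to be the main obstacle is the first one: verifying that the image of $T(\bold{m})$ in $\mathrm{gr}_v(\widetilde{\mathcal D}_{t,k,\nu}(r))$ is exactly $\widetilde\Delta$ of a PBW monomial --- and in particular that the vertical degree of $T(\bold{m})$ does not drop below $w(\bold{m})$ --- rather than merely that $T(\bold{m})$ lies in the expected filtration step. This rests on the multiplicativity of the symbol map for the (algebra) vertical filtration, on the identification of $\mathrm{gr}_v(\widetilde{\mathcal D}_{t,k,\nu}(r))$ with $S^\nu(A)$ as a bifiltered algebra, and on the fact that $\Delta_n$ intertwines $\delta_n$ with the canonical inclusion $A\hookrightarrow U(A)$ --- all of which were set up in the preceding sections; the remaining bookkeeping (averaging over orderings of factors, and the bijection between admissible $\bold{m}$ and monomials in the basis of $A'$) is routine.
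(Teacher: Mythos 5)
Your argument is correct and follows essentially the same route as the paper: both reduce to $\widetilde\Delta\colon U(\enalg[x,y])/(1-\nu)\xrightarrow{\sim}{\rm gr}_v(\widetilde{\mathcal D}_{t,k,\nu}(r))$ and then invoke a PBW-type basis. The only organizational difference is that the paper passes one step further to the double associated graded ${\rm gr}_h({\rm gr}_v(\cdot))$, where the images of $T(\bold m)$ are literally the monomial basis of $S^\bullet(\enalg[x,y]/\Bbbk\cdot 1)$, whereas you stop at ${\rm gr}_v$ and identify the images as the \emph{symmetrized} PBW monomials of $U(\enalg[x,y])/(1-\nu)$; since symmetrization does not alter PBW leading terms, these are a basis too, so the two arguments carry the same content. (Two minor points: your ``filtered-to-graded'' step needs only that the vertical filtration is exhaustive and begins at $F_v^{-1}=0$ --- the finite-dimensionality of $F_v^d$ is neither needed nor true here, as $F_v^0$ is already infinite-dimensional; and ``$\Delta_n$ applied to the monomial attached to $\bold m$'' should be read as applied to the symmetrized monomial, which you do make explicit in the following paragraph.)
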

\begin{proof}
Indeed, since $\widetilde{\Delta}$ is an isomorphism, it follows that the images of the basis of $U(\enalg[x,y])/(1_{\enalg}-\nu)$ form a basis of the DDCA.

Now let us pass to the associated graded with respect to the horizontal filtration. We know that ${\rm gr}_h(U(\enalg[x,y])/(1_{\enalg}- \nu)) = S^{\bullet}(\enalg[x,y]/\Bbbk \cdot 1_{\enalg})$. Hence the basis of this vector space is given by $\prod_{r,q,l} (\alpha_l x^ry^q)^{m_{r,q,l}}$ for all $\bold{m}$ specified in the statement of the problem. But now under ${\rm gr}_h(\widetilde{\Delta})$ these elements map exactly into the images of $T(\bold{m})$ in the associated graded ${\rm gr}_h({\rm gr}_v(\widetilde{\mathcal D}_{t,k,\nu}(r)))$. 

Hence $T({\bold{m}})$ form a basis of $\widetilde{\mathcal D}_{t,k,\nu}(r)$.
\end{proof}

\subsection{DDCA extended by a central element} \label{sectcentr}

In the previous section we have seen that $\widetilde{\mathcal D}_{t,k,\nu}(r)$ has a certain basis which arises from the fact that this DDCA is a deformation of $U(\enalg[x,y])/(1_{\enalg}-\nu)$. Here we would like to extend this construction to the case of $U(\enalg[x,y])$. We can do this if we turn $\nu$ into a central element instead of a scalar. 

In order to do this let us start with $\widetilde{\mathcal D}^{\rm{ext}}_{t,k,\nu}(r)$ which is an algebra over $\Cext$. If we can find a certain $\mathbb C[\nu]$-lattice in $\widetilde{\mathcal D}^{\rm{ext}}_{t,k,\nu}(r)$ which is closed under multiplication, this would allow us to consider this lattice as an algebra over $\mathbb C$, making $\nu$ a new central element.

In order to do this we need to show that the structure constants of $\widetilde{\mathcal D}^{\rm{ext}}_{t,k,\nu}(r)$ in the basis given by $T(\bold{m})$ are polynomial in $\nu$.

\begin{prop}
The product of $T(\bold{m}_1)$ and $T(\bold{m}_2)$ in $\widetilde{\mathcal D}^{\rm{ext}}_{t,k,\nu}(r)$ can be expressed as a linear combination of $T(\bold{m})$ with coefficients in $\mathbb C[\nu]$ for $\bold{m}$ such that $m_{0,0,1} = 0$.
\end{prop}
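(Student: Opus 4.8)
The plan is to descend to finite rank, where $T(\bold m_i)=\prodF T_{\nu_n}(\bold m_i)$, to analyze the product $T_{\nu_n}(\bold m_1)T_{\nu_n}(\bold m_2)$ inside $B_{t_n,k_n}(\nu_n,r)$, and to take the ultraproduct at the very end using $\prodF\nu_n=\nu$, $\prodF t_n=t$, $\prodF k_n=k$. A preliminary reduction removes the slot $(r,q,l)=(0,0,1)$: since $T_{0,0,\nu_n}(\alpha_1)=T_{0,0,\nu_n}(\Id_{\Bbbk^r})=\sum_{i=1}^{\nu_n}(\Id)_i\bold e=\nu_n\bold e$ is $\nu_n$ times the unit of $B_{t_n,k_n}(\nu_n,r)$, it is central, so the defining generating function of the $T_{\nu_n}(\bold m)$ forces $T_{\nu_n}(\bold m)=\nu_n^{m_{0,0,1}}\,T_{\nu_n}(\bold m')$, where $\bold m'$ is $\bold m$ with its $(0,0,1)$-entry set to zero. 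This lets me assume $m_{0,0,1}=0$ in $\bold m_1$ and $\bold m_2$, at the cost of a global factor $\nu^{(\bold m_1)_{0,0,1}+(\bold m_2)_{0,0,1}}$, and it also rewrites any $T(\bold m)$ occurring on the right with $m_{0,0,1}>0$ as $\nu^{m_{0,0,1}}$ times one with $m_{0,0,1}=0$.

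Next I would bound which basis elements can occur, uniformly in $n$. The $T_{\nu_n}(\bold m)$ with $m_{0,0,1}=0$ form a bifiltered basis of $B_{t_n,k_n}(\nu_n,r)$ for every $n$ (the finite-rank counterpart of the basis of $\widetilde{\mathcal D}^{\rm{ext}}_{t,k,\nu}(r)$, using the identification $S^n(\enalg[x,y])\simeq U(\enalg[x,y])/(1_{\enalg}-n)$ from Proposition~\ref{Deltaprop}), with $T_{\nu_n}(\bold m)$ of vertical degree $w(\bold m)$ and horizontal degree at most $|\bold m|$. Since multiplication respects the bifiltration, $T_{\nu_n}(\bold m_1)T_{\nu_n}(\bold m_2)$ lies in bidegree at most $(|\bold m_1|+|\bold m_2|,\,w(\bold m_1)+w(\bold m_2))$, so in the expansion $T_{\nu_n}(\bold m_1)T_{\nu_n}(\bold m_2)=\sum_{\bold m}c_{\bold m}(n)\,T_{\nu_n}(\bold m)$ only the finitely many $\bold m$ with $|\bold m|\le|\bold m_1|+|\bold m_2|$ and $w(\bold m)\le w(\bold m_1)+w(\bold m_2)$ occur, a constraint independent of $n$. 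It then suffices to prove that each coefficient $c_{\bold m}(n)\in\FQ$ is, for all sufficiently large $n$, the value at $\nu_n$ of a fixed polynomial whose coefficients are polynomials in $t_n,k_n$; applying $\prodF$ then yields $c_{\bold m}=\prodF c_{\bold m}(n)\in\mathbb C[\nu]$, and by the previous paragraph the indices $\bold m$ produced all have $m_{0,0,1}=0$.

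The core step is this polynomiality in $n$, which I would establish by computing $T_{\nu_n}(\bold m_1)T_{\nu_n}(\bold m_2)$ directly inside $H_{t_n,k_n}(\nu_n,r)$. Each $T_{r,q,\nu_n}(g)$ equals $\bigl(\sum_{i=1}^{\nu_n}(g)_i\,p_{r,q}(x_i,y_i)\bigr)\bold e$ for a universal noncommutative polynomial $p_{r,q}$ independent of $n$ and of the parameters, and $\sum_i(g)_i\,p_{r,q}(x_i,y_i)$ is $S_{\nu_n}$-invariant, hence commutes with $\bold e$; so a product of such elements becomes a single $\bold e$ (which absorbs all group elements via $s\bold e=\bold e$) applied to a sum, over tuples of indices $(i_1,\dots,i_s)\in\{1,\dots,\nu_n\}^s$, of normal-ordered monomials. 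Moving the $y$'s to the right using $[y_i,x_j]=\delta_{ij}(t-k\sum_{m\ne i}s_{im}\sigma_{im})+(1-\delta_{ij})ks_{ij}\sigma_{ij}$ introduces only corrections that are affine in $t_n,k_n$, involve the slot permutations $\sigma$, and are supported on index tuples with prescribed coincidences; grouping by coincidence pattern then turns the answer into a sum over patterns of a $\bold e$-symmetrized, $n$-independent expression, each weighted by the number $\nu_n(\nu_n-1)\cdots(\nu_n-j+1)$ of index tuples realizing a pattern on $j$ distinct indices (valid once $\nu_n\ge j$, which holds for all large $n$ because $j$ is bounded in terms of $\bold m_1,\bold m_2$), together with further explicit factors of $\nu_n$ coming from traces of $\Id_{\Bbbk^r}$-components such as $\sum_i(\Id)_i=\nu_n\cdot 1$. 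Re-collecting these $\bold e$-symmetrized expressions into the basis $\{T_{\nu_n}(\bold m)\}$ --- again a matter of counting index configurations by inclusion--exclusion --- exhibits each $c_{\bold m}(n)$, for $n$ large, as a polynomial in $\nu_n$ with coefficients in $\mathbb Q[t_n,k_n]$, as required.

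The step I expect to be the main obstacle is precisely this last one: organizing the normal-ordering, the $\bold e$-symmetrization, the evaluation of the $\sigma$'s and partial traces, and the change of basis into the $T_{\nu_n}(\bold m)$, so that every structure constant is visibly a $\mathbb Z$-linear combination of products of falling factorials $\nu_n(\nu_n-1)\cdots$ and monomials in $t_n,k_n$. Conceptually this is the assertion that the spherical subalgebra $B_{t_n,k_n}(\nu_n,r)$ together with its generators $T_{\nu_n}(\bold m)$ is a polynomial family in $\nu_n$; equivalently, since $\widetilde{\mathcal D}^{\rm{ext}}_{t,k,\nu}(r)=\End_{\Rep(H_{t,k}(\nu,r))}(\Ind_{S_\nu}^{H_{t,k}(\nu,r)}(\mathbb C))=\Hom_{\Rep^{\rm ext}(S_\nu)}\!\bigl(\mathbb C,\Ind_{S_\nu}^{H_{t,k}(\nu,r)}(\mathbb C)\bigr)$, the multiplication is composition of morphisms in the Deligne category $\Rep^{\rm ext}(S_\nu)$, and composition there is polynomial in $\nu$ in the natural (partition-diagram–type) bases. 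One could therefore alternatively deduce the proposition from that principle together with the bifiltration bound of the second paragraph, which already confines the output to finitely many basis vectors with $m_{0,0,1}=0$.
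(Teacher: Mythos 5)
Your overall strategy is the paper's: descend to finite rank, show that $T_{\nu_n}(\mathbf m_1)T_{\nu_n}(\mathbf m_2)$ can be rewritten as a combination of $T_{\nu_n}(\mathbf m)$ (with $m_{0,0,1}=0$) whose coefficients are polynomial in $n$ (and involve $t_n,k_n$), then apply $\prodF$; and your observation that free indices carrying $(\Id)_i$ are the source of the factors of $n$ is the right mechanism, as is the reduction $T_{\nu_n}(\mathbf m)=\nu_n^{m_{0,0,1}}T_{\nu_n}(\mathbf m')$. But the core of the proposition is exactly the step you defer as ``the main obstacle'': proving that after normal-ordering, every sum over index tuples re-collects into the $T_{\nu_n}(\mathbf m)$ with controlled coefficients. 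The paper carries this out by introducing \emph{admissible sums} (sums over tuples $i_1,\dots,i_k$ of products $(\alpha_{\gamma(1)})_{i_1}\cdots(\alpha_{\gamma(k)})_{i_k}a(1)_{i_{u(1)}}\cdots a(\lambda)_{i_{u(\lambda)}}\mathbf e$), noting that products of $T$'s are $n$-independent combinations of such sums, and then running an induction on weight plus width: either some index in $\gamma^{-1}(1)$ is not hit by $u$, and summing it out yields one factor of $n$ and a sum of smaller width; or the admissible sum agrees with some $T_{\nu_n}(\mathbf m)$ up to reorderings, and each of the three relevant commutators ($[(g)_i,(h)_j]$, $[x_i,y_j]$ with $i\ne j$, and $[x_i,y_i]$) is checked to produce only admissible sums of strictly smaller weight plus width with $n$-independent coefficients. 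Your ``coincidence patterns plus inclusion--exclusion'' gestures at the same bookkeeping but supplies neither the inductive structure nor the verification that the correction terms stay in the span with coefficients independent of $n$; without that, polynomiality is asserted rather than proved.

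Two of the supporting claims you lean on are also not available. Proposition \ref{Deltaprop} identifies $U(\enalg[x,y])/(1_{\enalg}-\nu)$ with $S^{\nu}(\enalg[x,y])$ only after the ultraproduct; at finite rank $\Delta_n$ has a much larger kernel (indeed $\gr_h S^n(A)\simeq\bigoplus_{i\le n}S^i(A')$, not $S^{\bullet}(A')$), so the $T_{\nu_n}(\mathbf m)$ with $m_{0,0,1}=0$ span $B_{t_n,k_n}(\nu_n,r)$ but are not a basis, and ``the coefficient $c_{\mathbf m}(n)$'' is not well defined as stated (this can be repaired for $\mathbf m$ in a fixed bidegree window and $n$ large, but that needs an argument you do not give --- and, since only \emph{existence} of one expansion with coefficients in $\mathbb C[\nu]$ is needed, the finite-rank basis is a detour rather than a fix). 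Likewise ``multiplication respects the bifiltration'' contradicts the paper's explicit remark that this bidegree is not an algebra bifiltration, so the bound on which $\mathbf m$ occur cannot be obtained by that one-liner; in the paper it falls out of the admissible-sum induction itself. Finally, the closing appeal to ``composition in the Deligne category is polynomial in $\nu$ in partition-diagram-type bases'' is a heuristic about $\Hom$'s between tensor powers of $\mathfrak h$ and does not apply verbatim to $\Hom_{\Rep^{\rm ext}(S_\nu)}(\mathbb C,\Ind_{S_\nu}^{H_{t,k}(\nu,r)}(\mathbb C))$; making it precise would essentially reproduce the computation you skipped.
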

\begin{proof}
Since $T(\bold{m}_i) = \prodF T_{\nu_n}(\bold{m}_i)$ we can instead prove that for $n$ big enough $T_n(\bold{m}_1)\cdot T_n(\bold{m_2})$ can be expressed as a linear combination of $T_n(\bold{m}')$ with coefficients which depend polynomially on $n$. Recall that $T_n(\bold{m})$ are the elements of $ B_{t,k}(\nu,r)$.

In order to do so we will first introduce a notion of an admissible sum:
\begin{def0}
For a collection of functions $a:[\lambda]\to \{x,y\}$, $u: [\lambda] \to [k]$ and $\gamma: [k] \to [r^2]$, construct an element:
$$
\sum_{i_1,\dots,i_k = 1}^n (\alpha_{\gamma(1)})_{i_1}(\alpha_{\gamma(2)})_{i_2}\dots(\alpha_{\gamma(k)})_{i_k}a(1)_{i_{u(1)}}a(2)_{i_{u(2)}}\dots a(l)_{i_{u(l)}}\bold{e} \ .
$$
We will call all such elements admissible sums. Call $|k|$ the width and $|\lambda|$ the weight of the admissible sum.
\end{def0}

Note that the product of admissible sums is an admissible sum. Indeed if we have two admissible sums with the data $(a_1,u_1,\gamma_1)$ and $(a_2, u_2,\gamma_2)$, their product is an admissible sum with the function $a$ given by concatenation of $a_1$ and $a_2$, i.e. $a:[\lambda_1+\lambda_2] \to \{x,y\}$ such that $a(i) = a_1(i)$ for $i\le \lambda_1$ and $a(i) = a_2(i-\lambda_1)$ for $i > \lambda_1$; with the function $u$ given by concatenation of $u_1$ and $u_2$ in the sense that $u:[\lambda_1+\lambda_2]\to[k_1+k_2]$ maps $i \le \lambda_1$ to $u(i) = u_1(i)$ and maps $i> \lambda_1$ to $u(i) = u_2(i-\lambda_1)+k_1$; with the function $\gamma$ given by concatenation of $\gamma_1$ and $\gamma_2$, i.e. $\gamma: [k_1+k_2]\to [r^2]$, i.e. $\gamma(i) = \gamma_1(i)$ for $i\le k_1$ and $\gamma(i) = \gamma_2(i-k_1)$ for $i>k_1$. This follows easily from the fact that $(g)_i$ commutes with both $x_j$ and $y_j$. Note that we see that the weights and widths of the admissible sums add up when we take their product.

Now also note that we have:
$$
T_{r,q,n}(\alpha_l) = \frac{(r)!(q)!}{(r+q)!}\sum_{\substack{a:[r+q]\to \{x,y\}, \\ |a^{-1}(x)|=r}}\sum_{i=1}^n (\alpha_l)_i a(1)_i a(2)_i \dots a(r+q)_i \bold{e} \ .
$$
I.e., we see that $T_{r,q,n}(\alpha_l)$ is equal to the linear combination of admissible sums with width $1$ and weight $r+q$ with $n$-independent coefficients.

Since $T_n(\bold{m})$ is the linear combination of the products of $T_{r,q,n}(\alpha_l)$ with $n$-independent coefficients it follows that $T_n(\bold{m})$ itself is a linear combination of admissible sums with $n$-independent coefficients. Hence $T_n(\bold{m}_1)T_n(\bold{m}_2)$ is also such a linear combination.

Now if we prove that any admissible sum can written down as a linear combination of $T_n(\bold{m})$ with coefficients depending polynomially on $n$ for $\bold{m}$ such that $m_{0,0,1}= 0$, we would prove our Proposition.

Let us prove this result by inducting on the sum of the weight and the width of the admissible sum.

As the base of our induction suppose we have an admissible sum of weight $0$ and width $0$. Then the sum is just $1$, so we are done, since $T(\bold{m})$ with $m_{r,q,l} = 0$ for all $r,q,l$ is equal to $1$.

Now for the induction step suppose we have proven our hypothesis for all admissible sums with the sum of weight and width less than $N$.

Suppose we have an admissible sum $S$ of weight $\lambda$ and width $k$ given by functions $a,u,\gamma$, such that $\lambda+k=N$. First suppose that $\Img(u)$ does not cover the set $\gamma^{-1}(1)$. It follows that there is $j \in [k]$ such that $i_j$ does not appear as a subscript of $x$ or $y$ and only appears as a subscript of $(\alpha_{\gamma(j)})_{i_j}= (1)_{i_j}$, but $(1)_{i_j} = 1$, so we can take this sum, gaining a multiple of $n$ and reducing our problem to the admissible sum with smaller width, for which the problem is already solved. Hence in this case we are done.\footnote{Note that this is precisely where the polynomial dependence on $n$ comes from.}

So we can suppose that there are no $j \in [k]$ such that $\gamma(j)=1$ and $j \notin \Im(u)$. Now let us define $\bold{m}$ in the following way. Set $m_{r,q,l}=|\{j \in [k]| \ \gamma(j) = l, \ R_j = r,\  Q_j = q\}|$, where $R_j = |\{ i \in [\lambda]| \ u(i) = j , \ a(i)=x\}|$ and $Q_j = |\{ i \in [\lambda]| \ u(i) = j , \ a(i)=y\}|$. Notice that we have $m_{0,0,1} = 0$ by our requirement.  

Now note that $T_n(\bold{m})$ is proportional with an $n$-independent coefficient to the linear combination of admissible sums which differ from $S$ only by the permutation of $[\lambda]$ and $[k]$. If we prove that when we permute elements in the admissible sum the only extra terms we get are admissible sums with smaller sum of width and weight with $n$-independent coefficients, we are done. Indeed, then it would follow that for some $n$-independent constant $S-T_n(\bold{m})$ is the linear combination of admissible sums with the sum of weight and width $<N$ for which the hypothesis is known.

So let us prove this assertion. Since $(g)_i$ commutes with both $x_j$ and $y_j$, $x_i$ commute among themselves and $y_i$ commute too, we need to consider three cases: 1)what happens when we commute $(\alpha_l)_{i_j}$ and $(\alpha_{l'})_{i_{j'}}$ in the sum; 2)what happens when we commute $x_{i_j}$ and $y_{i_{j'}}$ in the sum; 3) what happens when we commute $x_{i_j}$ and $y_{i_j}$ in the sum.

In the first case we use the fact that $[(g)_{i_j},(h)_{i_{j'}}] = \delta_{i_j,i_{j'}}([g,h])_{i_j}$. So it follows that the extra term in the sum we get is as follows:
$$
\sum_{\dots, i_j,\dots, i_{j'},\dots=1}^n \dots \delta_{i_j,i_{j'}}([\alpha_l,\alpha_{l'}])_{i_j}\dots = \sum_{\dots, i_j, \dots, \cap{i_{j'}},\dots=1}^n \dots  ([\alpha_l,\alpha_{l'}])_{i_j} \dots \ .
$$
So in this case, since $[\alpha_l,\alpha_{l'}]$ can be written as a linear combination of $\alpha_i$ with $n$-independent coefficients, it follows that we get admissible sums with smaller width, as required.

In the second case we know that $[x_i,y_j] = \delta_{ij}(t-k\sum_{m\ne i} s_{im}\sigma_{im} -k) + (ks_{ij}\sigma_{ij})$, when we insert this into our sum somewhere, first of all the weight drops by two. Then in the first term, which is proportional to $\delta_{i_j,i_{j'}}(t-k\sum_{m\ne i_j}s_{i_j,m}\sigma_{i_j,m}-k)$, we delete the sum over $i_{j'}$ (this forces us to take the product of two $\alpha_l$ in the $\enalg$ part of the admissible sum after some commutation, but this by the above remarks doesn't cause a problem). Then we also are required to commute all $S_n$ elements to the right to be absorbed into $\bold{e}$, which only changes the function $u$ in the admissible sum, and to move all $\sigma$'s to the left, where by acting they permute $(\alpha_l)_i$, changing the function $\gamma$. The second term is proportional to $(ks_{i_{j},i_{j'}}\sigma_{i_j,i_{j'}})$. And here again we just commute $S_n$ elements to the right and $\sigma$'s to the left.

Now the final case is when we commute $x_{i_j}$ with $y_{i_j}$. Since, $[x_i,y_i] = t-k\sum_{m\ne i} s_{im}\sigma_{im}$, we again see that the weight drops by $2$ and all of the preceding remarks apply to make all extra terms into the linear combinations of admissible sums with lower width plus weight with $n$-independent coefficients.

Thus we have proven the induction step and the proposition follows.
\end{proof}

From this proposition it follows that the $\mathbb C[\nu]$-lattice given by $\bigoplus_{\bold{m}, m_{0,0,1}\ne 0} \mathbb C[\nu] \cdot T(\mu)$ forms a subalgebra in $\widetilde{\mathcal D}^{\rm{ext}}_{t,k,\nu}(r)$. So we can define:
\begin{def0}
Define the DDC algebra $\mathcal D_{t,k}(r)$ over $\mathbb C$ to be equal to the $\mathbb C[\nu]$-lattice $\bigoplus_{\bold{m}, m_{0,0,1}\ne 0} \mathbb C[\nu] \cdot T(\mu) \subset \widetilde{\mathcal D}^{\rm{ext}}_{t,k,\nu}(r)$.
\end{def0}

Now in this algebra $\nu$ becomes a central element which we will call $K$. Note that before we had $\prodF T_{0,0,\nu_n}(1) = \prodF \nu_n = \nu$. Now in this algebra it becomes $K$ -- an independent element, so it makes sense to also denote $T_{0,0}(1) = K \in \mathcal D_{t,k}(r)$.

We can also see that this extends the isomorphism
$$
\widetilde{\Delta}:U(\enalg[x,y])/(1_{\enalg}-\nu) \simeq {\rm gr}_{v}(\widetilde{\mathcal D_{t,k,\nu}(r)})
$$
to  the isomorphism:
$$
\widetilde{\Delta}: U(\enalg[x,y]) \simeq {\rm gr}_v(\mathcal D_{t,k}(r)) \ ,
$$

which fully explains the name "deformed double current algebra".

Thus we can conclude that:
\begin{cor}
The set $\{T(\bold{m})\}$ for all $\bold{m}$  forms a basis of ${\mathcal D}_{t,k}(r)$.
\end{cor}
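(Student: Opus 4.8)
The plan is to deduce the corollary from the basis statement for $\widetilde{\mathcal D}^{\rm{ext}}_{t,k,\nu}(r)$ together with the preceding Proposition, by passing from a $\mathbb C[\nu]$-basis to a $\mathbb C$-basis. First I would record two consequences of what is already proved. By the preceding Proposition the $\mathbb C[\nu]$-span of $\{T(\bold{m}) : m_{0,0,1}=0\}$ inside $\widetilde{\mathcal D}^{\rm{ext}}_{t,k,\nu}(r)$ is closed under multiplication, so $\mathcal D_{t,k}(r)$ is a well-defined $\mathbb C$-algebra; and since those $T(\bold{m})$ already form a $\Cext$-basis of $\widetilde{\mathcal D}^{\rm{ext}}_{t,k,\nu}(r)$, they are \emph{a fortiori} linearly independent over $\mathbb C[\nu]\subset\Cext$, so $\mathcal D_{t,k}(r)$ is a free $\mathbb C[\nu]$-module on $\{T(\bold{m}) : m_{0,0,1}=0\}$. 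I would also use the identification of $\nu$ with the central element $K=T_{0,0}(1)$ recorded above.

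The one computational input I would then prove is the identity
$$
T(\bold{m}) \;=\; K^{\,m_{0,0,1}}\,T(\bold{m}^{\circ}) \qquad \text{in } \mathcal D_{t,k}(r),
$$
where $\bold{m}^{\circ}$ agrees with $\bold{m}$ except that $m^{\circ}_{0,0,1}=0$. I would establish it in finite rank and take the ultraproduct. Since $T_{0,0,n}(\alpha_1)=T_{0,0,n}(\Id_{\Bbbk^r})=\sum_{i=1}^n(\Id_{\Bbbk^r})_i\bold{e}=n\cdot 1$ is a \emph{central scalar} in $B_{t_n,k_n}(\nu_n,r)$, the summand $z_{0,0,1}T_{0,0,n}(\alpha_1)=n\,z_{0,0,1}\cdot 1$ commutes with everything in the exponential generating function that defines the $T_n(\bold{m})$, so that function factors as $e^{n z_{0,0,1}}\cdot\exp\!\bigl(\sum_{(r,q,l)\neq(0,0,1)}z_{r,q,l}T_{r,q,n}(\alpha_l)\bigr)$. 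Comparing coefficients of $\prod_{r,q,l}z_{r,q,l}^{m_{r,q,l}}/m_{r,q,l}!$ on the two sides gives $T_n(\bold{m})=n^{\,m_{0,0,1}}T_n(\bold{m}^{\circ})$ \fan, and applying $\prodF$ (with $\prodF n=\nu$, which is the central element $K$ of the lattice) yields the displayed identity.

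With this in place the corollary is bookkeeping. Since $\mathbb C[\nu]=\mathbb C[K]$ has $\mathbb C$-basis $\{K^{c}\}_{c\ge 0}$ and $\mathcal D_{t,k}(r)$ is free over $\mathbb C[K]$ on $\{T(\bold{m}^{\circ}):m^{\circ}_{0,0,1}=0\}$, the set $\{K^{c}T(\bold{m}^{\circ}):c\ge 0,\ m^{\circ}_{0,0,1}=0\}$ is a $\mathbb C$-basis of $\mathcal D_{t,k}(r)$; by the identity above this set equals $\{T(\bold{m})\}$ with $\bold{m}$ running over all collections of non-negative integers $m_{r,q,l}$, via the bijection $(c,\bold{m}^{\circ})\mapsto\bold{m}$ with $m_{0,0,1}=c$ and $m_{r,q,l}=m^{\circ}_{r,q,l}$ otherwise. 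I do not expect a genuine obstacle; the only point needing care is isolating the central scalar $T_{0,0,n}(\Id_{\Bbbk^r})$ so that the generating-function factorization is legitimate, and then tracking the reindexing $(c,\bold{m}^{\circ})\leftrightarrow\bold{m}$.
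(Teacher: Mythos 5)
Your proof is correct, and it takes a different (more explicit) route than the paper. The paper obtains this corollary as a consequence of the associated-graded machinery: it extends the isomorphism $\widetilde{\Delta}$ to $U(\enalg[x,y])\simeq {\rm gr}_v(\mathcal D_{t,k}(r))$ and argues as in the earlier basis proposition for $\widetilde{\mathcal D}_{t,k,\nu}(r)$, matching the images of the $T(\mathbf m)$ in ${\rm gr}_h({\rm gr}_v)$ with the monomial basis of $S^{\bullet}(\enalg[x,y])$, now with powers of $1_{\enalg}$ allowed. You instead stay at the level of the $\mathbb C[\nu]$-lattice: freeness on $\{T(\mathbf m):m_{0,0,1}=0\}$ (which is immediate from the $\Cext$-linear independence, and is really built into the definition of the lattice), the identity $T(\mathbf m)=K^{m_{0,0,1}}T(\mathbf m^{\circ})$, and the reindexing $(c,\mathbf m^{\circ})\leftrightarrow\mathbf m$. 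Your key identity is established correctly: $T_{0,0,\nu_n}(\Id_{\Bbbk^r})=\nu_n\cdot 1$ is a central scalar of $B_{t_n,k_n}(\nu_n,r)$, so the generating-function factorization (equivalently, deleting the $(0,0,1)$-letters from each word in the symmetrized product and counting their placements) is legitimate, and taking $\prodF$ gives the claim; note only that in the algebraic case the finite rank is $\nu_n$ rather than $n$, so the identity should read $T_{\nu_n}(\mathbf m)=\nu_n^{m_{0,0,1}}T_{\nu_n}(\mathbf m^{\circ})$, a purely notational point. Your route buys something the paper leaves implicit: it is exactly this identity that shows $T(\mathbf m)$ with $m_{0,0,1}>0$ lies in the lattice at all, so your argument fills in a step rather than bypassing one, while the paper's route is uniform with its earlier ${\rm gr}_v$/${\rm gr}_h$ arguments and simultaneously yields the statement that ${\rm gr}_v(\mathcal D_{t,k}(r))\simeq U(\enalg[x,y])$. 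One further remark: you silently read the definition of the lattice as the span over $\mathbf m$ with $m_{0,0,1}=0$; the paper's displayed condition ``$m_{0,0,1}\ne 0$'' is evidently a typo, and your reading is the intended one.
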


Notice that we also have the following important Corollary which connects the DDC algebra $\mathcal D_{t,k}(r)$ with $\widetilde{\mathcal D}_{t,k,\nu}(r)$.

\begin{cor}
The DDC algebra $\widetilde{\mathcal D}_{t,k,\nu}(r)$ is isomorphic to $\mathcal D_{t,k}(r)/(K - \nu)$.
\end{cor}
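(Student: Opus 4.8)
The plan is to construct an explicit surjective algebra homomorphism $\psi\colon \mathcal D_{t,k}(r)\to\widetilde{\mathcal D}_{t,k,\nu}(r)$ sending the central element $K$ to the scalar $\nu$, to observe that it annihilates $K-\nu$, and then to conclude by a basis count that the induced map $\bar\psi\colon \mathcal D_{t,k}(r)/(K-\nu)\to\widetilde{\mathcal D}_{t,k,\nu}(r)$ is bijective.

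The key preliminary step is to make precise in what sense the two algebras ``have the same'' structure constants. Re-examining the proof that $T(\bold{m}_1)T(\bold{m}_2)$ is a $\mathbb C[\nu]$-linear combination of the $T(\bold{m})$, one notices that the argument is a finite combinatorial recursion on admissible sums performed at a fixed finite rank, and is insensitive to the characteristic once it exceeds the weights occurring. Consequently, for each pair $\bold{m}_1,\bold{m}_2$ there are polynomials $P^{\bold{m}}_{\bold{m}_1,\bold{m}_2}\in\mathbb C[z]$, indexed by tuples $\bold{m}$ with $m_{0,0,1}=0$ and almost all zero (the fixed parameters $t,k$ being absorbed into the coefficients), such that
$$
T_N(\bold{m}_1)\,T_N(\bold{m}_2)=\sum_{\bold{m}}P^{\bold{m}}_{\bold{m}_1,\bold{m}_2}(N)\,T_N(\bold{m})\qquad\text{in }B_{t,k}(N,r)
$$
for every rank $N$ large relative to $\bold{m}_1,\bold{m}_2$ and every characteristic equal to $0$ or larger than $w(\bold{m}_1)+w(\bold{m}_2)$. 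Applying \lthm{} along the sequence $B_{t_n,k_n}(\nu_n,r)$ whose restricted ultraproduct is $\widetilde{\mathcal D}_{t,k,\nu}(r)$ (note that $\nu_n\to\infty$ along $\mathcal F$ since $\nu\notin\mathbb Z_{\ge 0}$, and the $p_n$ are $0$ or tend to $\infty$, so the displayed identity holds for almost all $n$) gives $T(\bold{m}_1)T(\bold{m}_2)=\sum_{\bold{m}}P^{\bold{m}}_{\bold{m}_1,\bold{m}_2}(\nu)\,T(\bold{m})$ in $\widetilde{\mathcal D}_{t,k,\nu}(r)$. The same identity applied along the sequence defining $\widetilde{\mathcal D}^{\mathrm{ext}}_{t,k,\nu}(r)$ shows that those very same polynomials, now read with $\nu=K$ kept formal, govern the multiplication of $\mathcal D_{t,k}(r)$ in its $\mathbb C[K]$-basis $\{T(\bold{m})\colon m_{0,0,1}=0\}$.

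Granting this, let $\psi$ be the $\mathbb C$-linear map that is $\mathbb C[K]$-linear for the action of $\mathbb C[K]$ on $\widetilde{\mathcal D}_{t,k,\nu}(r)$ through $K\mapsto\nu$ and satisfies $\psi(T(\bold{m}))=T(\bold{m})$ for all $\bold{m}$ with $m_{0,0,1}=0$. The structure-constant identity shows $\psi(T(\bold{m}_1)T(\bold{m}_2))=\psi(T(\bold{m}_1))\psi(T(\bold{m}_2))$, so $\psi$ is an algebra homomorphism, and it is surjective because its image contains the $\mathbb C$-basis $\{T(\bold{m})\colon m_{0,0,1}=0\}$ of $\widetilde{\mathcal D}_{t,k,\nu}(r)$. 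Since $K=T_{0,0}(1)=\prodF\nu_n$ is the element $\nu\cdot 1$ of $\mathcal D_{t,k}(r)$, we get $\psi(K-\nu)=0$, so $\psi$ factors through a surjection $\bar\psi\colon \mathcal D_{t,k}(r)/(K-\nu)\to\widetilde{\mathcal D}_{t,k,\nu}(r)$. Finally, $\mathcal D_{t,k}(r)=\bigoplus_{m_{0,0,1}=0}\mathbb C[K]\,T(\bold{m})$ is free over $\mathbb C[K]$, so base change along $\mathbb C[K]\to\mathbb C[K]/(K-\nu)\cong\mathbb C$ identifies $\mathcal D_{t,k}(r)/(K-\nu)$ with $\bigoplus_{m_{0,0,1}=0}\mathbb C\cdot\overline{T(\bold{m})}$, the images $\overline{T(\bold{m})}$ forming a $\mathbb C$-basis; thus $\bar\psi$ sends a basis bijectively onto a basis and is an isomorphism. (Alternatively one may conclude on associated graded: $K-\nu$ has vertical degree $0$ and corresponds under $\widetilde{\Delta}$ to $1_{\enalg}-\nu$, whence ${\rm gr}_v\bigl(\mathcal D_{t,k}(r)/(K-\nu)\bigr)\simeq U(\enalg[x,y])/(1_{\enalg}-\nu)\simeq S^\nu(\enalg[x,y])={\rm gr}_v\widetilde{\mathcal D}_{t,k,\nu}(r)$, and $\bar\psi$ is a filtered surjection that is an isomorphism on ${\rm gr}_v$.)

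The only genuinely substantive ingredient is the first step: that one fixed family of polynomials computes all the products $T(\bold{m}_1)T(\bold{m}_2)$ simultaneously at every sufficiently large finite rank and in every sufficiently large characteristic. This is essentially already contained in the proof of the polynomiality of the structure constants; what remains is to recognize that proof as producing rank- and characteristic-independent polynomials, after which everything else is bookkeeping with \lthm{} and with bases of free $\mathbb C[K]$-modules.
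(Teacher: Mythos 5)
Your proof is correct and follows the route the paper intends for this corollary (stated there without proof): the $T(\bold{m})$ with $m_{0,0,1}=0$ form a basis of both algebras, the structure constants are the same polynomials coming from the finite-rank computation, and specializing $K\mapsto\nu$ on the free $\mathbb C[K]$-module gives the isomorphism. Your explicit justification that one fixed family of polynomials works uniformly in the rank and in all sufficiently large characteristics (needed to compare the sequence defining $\widetilde{\mathcal D}_{t,k,\nu}(r)$ for algebraic $\nu$ with the characteristic-zero sequence defining $\widetilde{\mathcal D}^{\rm ext}_{t,k,\nu}(r)$) is a point the paper leaves implicit, and it is handled correctly.
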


\section{Guay's construction.}

The DDC algebras were constructed by Guay and co-authors first for type A in \cite{Gu2} and then for any simple Lie algebra in \cite{guay2016deformed}. In this section we will explain how our algebra is connected with the one constructed by Guay. Note that in this section we always have $r\ge 4$, since Guay's DDC algebras are not defined (yet) for smaller rank.

\subsection{Guay's DDCA of type A.}

Here we will recall one of the main definitions of Guay's DDCA. 

\begin{def0} \label{Guaydef}
The Guay's DDC algebra $ \mathbb D_{\lambda,\beta}(r)$ for $\lambda,\beta \in \mathbb C$ is an algebra generated by elements $z, K(z), Q(z), P(z)$, where $z \in \mathfrak{sl}_r$, which satisfy the following relations. The subalgebra generated by $z$ and $K(z)$ is isomorphic to $U(\mathfrak{sl}_r[u])$, i.e. there is a map $U(\mathfrak{sl}_r[u]) \to  \mathbb D_{\lambda,\beta}(r)$. Similarly the subalgebra generated by $z$ and $Q(z)$ is isomorphic to $U(\mathfrak{sl}_r[v])$. Also, $P(z)$ is linear in $z$ and $[y,P(z)] = P([y,z])$. And if we consider $1 \le a,b,c,d\le r$ such that $(a,b) \ne (d,c)$ and $a\ne b, c \ne d$ we have:
$$
[K(E_{ab}),Q(E_{cd})] = P([E_{ab},E_{cd}]) + (\beta - \tfrac{\lambda}{2})(\delta_{bc}E_{ad} + \delta_{ad}E_{bc}) + \frac{\lambda}{4}(\delta_{ad} + \delta_{cb}) S(E_{ab},E_{cd}) +
$$
$$
+\frac{\lambda}{4}\sum_{1 \le i\ne j < n}S([E_{ab},E_{ij}],[E_{ji}, E_{cd}]) \ ,
$$
where $S(z,y) = zy+yz$.
\end{def0}
\begin{rem}
Note that if $[E_{ab},E_{cd}] = 0$ (i.e. $b \ne c$ and $a \ne d$) the last relation simplifies to:
$$
[K(E_{ab}),Q(E_{cd})] = - \lambda E_{ad}E_{cb} \ ,
$$
since only the last term for $(i,j) = (b,d)$ or $(i,j) = (c,a)$ survives. 
\end{rem}

\subsection{Construction of the homomorphism $ \mathbb D_{\lambda,\beta}(r) \to {\mathcal D}_{t,k}(r)$}

In this section we will construct a map from  Guay's DDCA to our DDCA, i.e. we will construct the elements in ${\mathcal D}_{t,k}(r)$ that satisfy the relations of Definition \ref{Guaydef}.

First we need to establish a convenient way to perform calculations in ${\mathcal D}_{t,k}(r)$. Since this algebra is defined as an ultraproduct of a family of other algebras, we will use that for our calculations.
\begin{def0}
Suppose $X \in \widetilde{\mathcal D}_{t,k,\nu}(r)$ is an element of DDCA. We have $X = \prodF X_n$, where $X_n \in B_{t_n,k_n}(\nu_n,r)$. We will denote this correspondence by $X \sim X_n$. 

A similar correspondence exists for $\mathcal D_{t,k}(r)$. The only difference is that here $\nu_n \sim K$ instead of $\nu_n \sim \nu$.

Note also that all the elements of $B_{t_n,k_n}(\nu_n,r)$ actually have a multiple of $\bold{e}$ on the right. We will omit this for brevity.
\end{def0}

Now we can construct a map between the DDC algebras.
\begin{prop} \label{psicons}
There is a map $\psi:  \mathbb D_{k,-\frac{t}{2} - \frac{k(r-2)}{4}}(r) \to \mathcal {D}_{t,k}(r)$ given by:
$$
\psi(z) = T_{0,0}(z) \ , \ \psi(K(z)) = T_{1,0}(z) \ , \ \psi(Q(z)) = T_{0,1}(z) \ , \ \psi(P(z)) = T_{1,1}(z) \ ,
$$
where $z \in \mathfrak{sl}_r$.
\end{prop}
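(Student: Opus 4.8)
The plan is to exhibit $\psi$ directly: since $\mathbb D_{\lambda,\beta}(r)$ is given by generators and relations, it suffices to check that the four proposed images satisfy all the defining relations of Definition \ref{Guaydef} with $\lambda=k$ and $\beta=-\tfrac t2-\tfrac{k(r-2)}4$. Each of these relations is a polynomial identity among the elements $T_{a,b}(z)$ with coefficients that are polynomial in the central parameter; via the correspondence $X\sim X_n$ (with $\nu_n\sim K$) and \lthm{}, such an identity holds in $\mathcal D_{t,k}(r)$ as soon as the corresponding identity holds in $B_{t_n,k_n}(\nu_n,r)$ for almost all $n$, where $T_{0,0}(z)\sim\sum_i(z)_i$, $T_{m,0}(z)\sim\sum_i(z)_ix_i^m$, $T_{0,m}(z)\sim\sum_i(z)_iy_i^m$, $T_{1,1}(z)\sim\sum_i(z)_i\tfrac{x_iy_i+y_ix_i}{2}$. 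I would verify these identities by direct computation in $H_{t_n,k_n}(\nu_n,r)$, using the relations of Definition \ref{extcherfin}, the fact that $(g)_i$ commutes with every $x_j$ and $y_j$, and $s_{ij}\bold{e}=\bold{e}$.

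The current-algebra and equivariance relations come out immediately. Since $(z)_i$ commutes with all $x_j,y_j$, and the $x$'s (resp. $y$'s) commute among themselves, $[T_{m,0}(z),T_{m',0}(w)]=T_{m+m',0}([z,w])$, likewise for the $y$-current, and $[T_{0,0}(z),T_{1,1}(w)]=T_{1,1}([z,w])$. Hence $z\otimes u^m\mapsto T_{m,0}(z)$ and $z\otimes v^m\mapsto T_{0,m}(z)$ are Lie algebra homomorphisms $\mathfrak{sl}_r[u],\mathfrak{sl}_r[v]\to\mathcal D_{t,k}(r)$, which extend to $U(\mathfrak{sl}_r[u]),U(\mathfrak{sl}_r[v])$; as $\mathfrak{sl}_r[u]$ is generated by its degree $\le 1$ part, these maps are determined by the images of $z$ and $K(z)$ (resp. $z$ and $Q(z)$), which is exactly the relation that $z,K(z)$ generate a copy of $U(\mathfrak{sl}_r[u])$ (resp. $z,Q(z)$ a copy of $U(\mathfrak{sl}_r[v])$). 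The relation $[y,P(z)]=P([y,z])$ is the equivariance $[T_{0,0}(y),T_{1,1}(z)]=T_{1,1}([y,z])$ just noted, and $P(z)=T_{1,1}(z)$ is manifestly linear in $z$.

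What remains, and is the heart of the argument, is the mixed relation, for which I would expand
$$
[T_{1,0}(E_{ab}),T_{0,1}(E_{cd})]\ \sim\ \sum_{i\ne j}[(E_{ab})_ix_i,(E_{cd})_jy_j]\ +\ \sum_i[(E_{ab})_ix_i,(E_{cd})_iy_i].
$$
For $i\ne j$ one uses $[x_i,y_j]=-k_ns_{ij}\sigma_{ij}$, the identity $(E_{ab})_i(E_{cd})_j\sigma_{ij}=(E_{ad})_i(E_{cb})_j$ (obtained from $\sigma_{ij}=\sum_{\alpha,\beta}(E_{\alpha\beta})_i(E_{\beta\alpha})_j$), and $s_{ij}\bold{e}=\bold{e}$; for $i=j$ one uses $[x_i,y_i]=-(t_n-k_n\sum_{m\ne i}s_{im}\sigma_{im})$ together with $E_{ab}E_{cd}=\delta_{bc}E_{ad}$, $E_{cd}E_{ab}=\delta_{da}E_{cb}$. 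After symmetrizing the $x_iy_i$-monomials into $T_{1,1}$, re-expanding every remaining $\sigma_{im}$ via $\sigma_{im}=\sum_{\alpha,\beta}(E_{\alpha\beta})_i(E_{\beta\alpha})_m$, and converting the resulting double sums $\sum_{i\ne m}(E)_i(E')_m$ into $T_{0,0}(E)T_{0,0}(E')-T_{0,0}(EE')$, the whole expression becomes $T_{1,1}([E_{ab},E_{cd}])$ plus products $T_{0,0}(\cdot)T_{0,0}(\cdot)$ plus scalar (degree $(0,0)$) terms. Writing each $T_{0,0}(E)T_{0,0}(E')=\tfrac12 S(T_{0,0}(E),T_{0,0}(E'))+\tfrac12 T_{0,0}([E,E'])$, splitting every diagonal elementary matrix as $\tfrac1r\Id$ plus a traceless part, and comparing with the right-hand side of Definition \ref{Guaydef} after expanding Guay's term $\sum_{i\ne j}S([E_{ab},E_{ij}],[E_{ji},E_{cd}])$ by the same bracket formulas, all terms match — including the $\tfrac\lambda4(\delta_{ad}+\delta_{cb})S(E_{ab},E_{cd})$ piece and the cancellation of every $K$-proportional term (which is guaranteed by the constraint $(a,b)\ne(d,c)$) — and this forces $\lambda=k$ and $\beta=-\tfrac t2-\tfrac{k(r-2)}4$ (the $r$-dependence of $\beta$ coming precisely from the $T_{0,0}(EE')$ corrections and the $\tfrac1r\Id$-splitting). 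The degenerate case $[E_{ab},E_{cd}]=0$ of the Remark following Definition \ref{Guaydef}, where both sides collapse to $-k\,T_{0,0}(E_{ad})T_{0,0}(E_{cb})$, already pins down $\lambda=k$ and serves as a convenient consistency check. Finally, \lthm{} transports all the verified identities from $B_{t_n,k_n}(\nu_n,r)$ to $\mathcal D_{t,k}(r)$, so $\psi$ is a well-defined algebra homomorphism.

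I expect the mixed-relation computation to be the only genuine difficulty. Each individual step is an elementary manipulation inside $H_{t_n,k_n}(\nu_n,r)$, but correctly organizing the symmetrizations, the $\mathfrak{gl}_r$-versus-$\mathfrak{sl}_r$ splitting hidden in the Casimir-type sums $\sum_\beta(E_{\alpha\beta})_\bullet(E_{\beta\alpha})_\bullet$, and the various Kronecker-delta cases (whether $b=c$, $a=d$, $b=d$, or $a=c$) is where care is needed — and it is exactly this bookkeeping that produces the precise value of $\beta$.
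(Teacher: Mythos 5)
Your proposal is correct and takes essentially the same route as the paper: check Guay's relations for the proposed images by direct computation in the finite-rank spherical subalgebras $B_{t_n,k_n}(\nu_n,r)$ — using the commutators of Definition \ref{extcherfin}, the identity $(E_{\alpha\beta})_i(E_{\gamma\delta})_j\sigma_{ij}=(E_{\alpha\delta})_i(E_{\gamma\beta})_j$, $s_{ij}\be=\be$, and $\delta_{ad}\delta_{bc}=0$ — and then transfer the identities to $\mathcal D_{t,k}(r)$ via \lthm{} with $\nu_n\sim K$, the mixed $[K(E_{ab}),Q(E_{cd})]$ relation being the only substantial calculation and producing exactly $\lambda=k$, $\beta=-\tfrac{t}{2}-\tfrac{k(r-2)}{4}$. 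The only cosmetic difference is that the paper obtains the $U(\mathfrak{sl}_r[u])$- and $U(\mathfrak{sl}_r[v])$-relations from the identification $\prodF S^{\nu_n}(\enalg[x])\simeq U(\enalg[x])/(1_{\enalg[x]}-\nu)$ of Proposition \ref{Deltaprop}, whereas you verify the current brackets directly; both are valid.
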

\begin{proof}
The above expressions can be rewritten as 
$$
\psi(z) \sim \sum_{i=1}^{\nu_n}(z)_i \ , \ \psi(K(z)) \sim \sum_{i=1}^{\nu_n}(z)_i\cdot x_i \ , \ \psi(Q(z)) \sim \sum_{i=1}^{\nu_n}(z)_i\cdot y_i \ \  \textrm{and}  
$$
$$
\psi(P(z)) \sim \sum_{i=1}^{\nu_n} (z)_i \cdot \frac{x_iy_i+y_ix_i}{2} \ .
$$

We only need to check that the images of $z,K(z),Q(z)$ and $P(z)$ satisfy the required relations.

We will start with relations between $z$ and $K(z)$. Note that their image in ${\mathcal D}_{t,k}(r)$ is contained in the ultraproduct of subalgebras of $B_{t_n,k_n}(\nu_n,r)$ generated by $x_i$ and $g$. Since $x_i$ commute with each other, these subalgebras are equal to 
$$
(\enalg^{\otimes \nu_n} \otimes \Bbbk [x_1,\dots,x_{\nu_n}])^{S_{\nu_n}} = ((\enalg[x])^{\otimes \nu_n})^{S_{\nu_n}} = S^{\nu_n}(\enalg[x]) \ .
$$
But by Proposition \ref{Deltaprop}  we know that 
$$
\prodF S^{\nu_n}(\enalg[x]) \simeq U(\enalg[x])/(1_{\enalg[x]} - \nu) \ .
$$
And the construction of this isomorphism also shows that under it $\psi(z) \mapsto z$ and \linebreak $\psi(K(z)) \mapsto z\cdot x$. Also note that:
$$
[\psi(K(z_1)),\psi(K(z_2))] \sim \sum_{i,j}[(z_1)_i  \cdot x_i, (z_2)_j \cdot x_j ] \mapsto \sum_{i}([z_1,z_2])_i (x_i)^2 \sim [z_1,z_2]\cdot x^2 \ 
$$
under the above isomorphism.
Thus these elements generate

$$
U(\mathfrak{sl}_r[x]) \subset U(\enalg[x])/(1_{\enalg[x]} - \nu) .
$$

The same holds for $z$ and $Q(z)$.

Now, since $\psi(P(z)) \sim \sum_{i=1}^{\nu_n} (z)_i \cdot \frac{x_iy_i + y_ix_i}{2}$ it follows that it is linear in $z$ and 
$$
[\psi(y),\psi(P(z))] = \psi(P[y,z]) \ ,
$$
since $[(y)_j, (z)_i] = \delta_{ij}([y,z])_{i}$.

We need to check the last relation of Definition \ref{Guaydef}. 

So let us first calculate $[\psi(K(E_{ab})), \psi(Q(E_{cd}))]$. We have:
$$
[\psi(K(E_{ab})), \psi(Q(E_{cd}))] \sim 
$$
$$
\sim [\sum_{i=1}^{\nu_n} (E_{ab})_i \cdot x_i, \sum_{j=1}^{\nu_n} (E_{cd})_j \cdot y_j] = \sum_{i,j =1}^{\nu_n}[(E_{ab})_i,(E_{cd})_j]\cdot x_iy_j + \sum_{i,j = 1}^{\nu_n} (E_{cd})_j (E_{ab})_i \cdot [x_i,y_j] = 
$$
$$
= \sum_{i =1}^{\nu_n}([E_{ab},E_{cd}])_i \cdot \left(\frac{x_iy_i+y_ix_i+[x_i,y_i]}{2}\right)+
$$
$$
+\sum_{i,j=1}^{\nu_n} \left(\frac{(E_{cd})_j(E_{ab})_i + (E_{ab})_i (E_{cd})_j-[(E_{ab})_i, (E_{cd})_j]}{2}\right)\cdot [x_i,y_j] = 
$$
$$
= \psi(P([E_{ab},E_{cd}])) + \sum_{i,j=1}^{\nu_n}\frac{(E_{cd})_j(E_{ab})_i + (E_{ab})_i (E_{cd})_j}{2} \cdot [x_i,y_j] \ .
$$
Now we need to work with the last term. We will expand it using the commutator relation in the extended Cherednik algebra and we note that elements of $S_{\nu_n}$ disappear into the assumed $\bold{e}$ term in the formula:
$$
\sum_{i,j=1}^{\nu_n}\frac{(E_{cd})_j(E_{ab})_i + (E_{ab})_i (E_{cd})_j}{2} \cdot [x_i,y_j] =
$$
$$
=-\sum_{i\ne j}\frac{(E_{cd})_j(E_{ab})_i - (E_{ab})_i(E_{cd})_j}{2} k_n\sigma_{ij} + \sum_i \frac{(E_{cd})_i(E_{ab})_i + (E_{ab})_i(E_{cd})_i}{2} (t_n - k_n\sum_{m \ne i}\sigma_{im}) \ .
$$
Notice that $(E_{\alpha\beta})_i (E_{\gamma\delta})_j \sigma_{ij} = (E_{\alpha \delta})_i (E_{\gamma\beta})_j$, so the above expression becomes:
$$
-k_n \sum_{i\ne j}(E_{ad})_i (E_{cb})_j - \frac{t_n}{2}\sum_i (\delta_{ad}(E_{cb})_i + \delta_{bc}(E_{ad})_i) +\frac{k_n}{2}\sum_{i\ne m}(\delta_{ad}(E_{cb})_i\sigma_{im} + \delta_{bc}(E_{ad})_i\sigma_{im}) \ .
$$

Now we need to calculate what $\sum_{i\ne m}(E_{\alpha\beta})_i \sigma_{im}$ is equal to. We have:
$$
\sum_{i\ne m}(E_{\alpha\beta})_i \sigma_{im} = \sum_{i\ne m}\sum_{\gamma,\delta}(E_{\alpha\beta})i (E_{\gamma\delta})_i(E_{\delta\gamma})_m = \sum_{i\ne m}\sum_{\delta}(E_{\alpha\delta})_i(E_{\delta\beta})_m \ .
$$
So the answer is:
$$
[\psi(K(E_{ab})), \psi(Q(E_{cd}))] \sim 
$$
$$
\sim \psi(P([E_{ab},E_{cd}])) - k_n \sum_{i,j=1, i\ne j}^{\nu_n}(E_{ad})_i (E_{cb})_j - \frac{t_n}{2}\sum_i^{\nu_n} (\delta_{ad}(E_{cb})_i + \delta_{bc}(E_{ad})_i) -
$$
$$
+\frac{k_n}{2}\sum_{m, i = 1, m \ne i}^{\nu_n}\sum_{e=1}^{r}\left(\delta_{ad}(E_{ce})_i(E_{eb})_m + \delta_{bc}(E_{ae})_i(E_{ed})_m\right) \ .
$$

Now we need to calculate the image of the r.h.s. of the same relation. The first term is clear. The second term contains elements like $\psi(E_{\alpha\beta}) \sim \sum_i (E_{\alpha\beta})_i$. The third term is more complex. We have:
$$
\psi(S(E_{ab},E_{cd})) \sim \sum_{i,j}((E_{ab})_i(E_{cd})_j + (E_{cd})_j(E_{ab})_i) = 2\sum_{i\ne j}(E_{ab})_i(E_{cd})_j + \sum_i (\delta_{bc}(E_{ad})_i + \delta_{ad}(E_{cb})_i) \ .
$$
Now we want to transform the last term:
$$
\sum_{\alpha \ne \beta} S([E_{ab},E_{\alpha\beta}],[E_{\beta\alpha},E_{cd}]) \ . 
$$
Before we calculate its image we can rewrite it as follows:
$$
-2S(E_{ad},E_{cb}) + \delta_{ad}\sum_{\alpha\ne a}S(E_{\alpha b},E_{c\alpha}) + \delta_{bc}\sum_{\alpha \ne b}S(E_{a\alpha},E_{\alpha d}) \ .
$$
Now, since $c \ne d$ and $a \ne b$, it follows that:
$$
\psi(S(E_{ad},E_{cb}))\sim 2\sum_{i\ne j}(E_{ad})_i(E_{cb})_j \ .
$$

And since $\delta_{ad}\delta_{bc} = 0$ in our situation, it follows:
$$
\psi(\delta_{ad}\sum_{\alpha \ne a}S(E_{\alpha b},E_{c\alpha})) \sim 2\delta_{ad}\sum_{i \ne j}\sum_{\alpha \ne a} (E_{\alpha b})_i(E_{c\alpha})_j + (r-1)\delta_{ad}\sum_i(E_{cb})_i \ ,
$$
and similarly:
$$
\psi(\delta_{bc}\sum_{\alpha \ne b}S(E_{a\alpha},E_{\alpha d})) \sim 2\delta_{bc}\sum_{i \ne j}\sum_{\alpha \ne b} (E_{a\alpha})_i(E_{\alpha d})_j + (r-1)\delta_{bc}\sum_i(E_{ad})_i \ .
$$

Now we can assemble all the formulas to obtain that the r.h.s. of the relation equals to:
$$
\psi(P([E_{ab},E_{cd}])) +\left[\beta-\frac{\lambda}{2}\right]\left(\delta_{ad}\sum_i (E_{ad})_i + \delta_{ad}\sum_i (E_{cb})_i\right) + \frac{\lambda}{2}(\delta_{ad}+\delta_{bc})\sum_{i\ne j}(E_{ab})_i(E_{cd})_j +
$$
$$
+\frac{\lambda}{4}\left(\sum_{i} \delta_{bc}(E_{ad})_i + \sum_i\delta_{ad}(E_{cb})_i\right) - \lambda\sum_{i\ne j}(E_{ad})_i(E_{cb})_j +
$$
$$
+\frac{\lambda}{2}\left(\delta_{ad}\sum_{i\ne j}\sum_{\alpha \ne a}(E_{\alpha b})_i(E_{c\alpha})_j + \delta_{bc}\sum_{i\ne j}\sum_{\alpha \ne b}(E_{a\alpha})_i(E_{\alpha d})_j\right)+
$$
$$
+ \frac{\lambda (r-1)}{4}\left(\delta_{ad}\sum_{i}(E_{cb})_i + \delta_{bc}\sum_i (E_{ad})_i\right) = 
$$
$$
=\psi(P([E_{ab},E_{cd}])) + \left[\beta - \frac{\lambda}{2}+ \frac{\lambda}{4}+ \frac{\lambda(r-1)}{4}\right]\left(\delta_{ad}\sum_i (E_{ad})_i + \delta_{ad}\sum_i (E_{cb})_i\right) -  
$$
$$
- \lambda \sum_{i\ne j}(E_{ad})_i(E_{cb})_j +\frac{\lambda}{2}\left(\delta_{ad}\sum_{i\ne j}\sum_{\alpha }(E_{\alpha b})_i(E_{c\alpha})_j + \delta_{bc}\sum_{i\ne j}\sum_{\alpha }(E_{a\alpha})_i(E_{\alpha d})_j\right) \ .
$$
We can see that these two formulas are the same if $\lambda =  k$ and $\beta = -\frac{t}{2} -\frac{k}{4}(r-2)$.
\end{proof}

From now on fix $\lambda = k$ and $\beta = -\frac{t}{2} - \frac{k}{4}(r-2)$.

\subsection{Surjectivity of $\psi$}

In this section we would like to prove that $\psi$ is in fact a surjective map. 

\begin{prop} \label{psisurj}
For $t+rk \ne 0$, the map $\psi$ defined in Proposition \ref{psicons} is surjective.
\end{prop}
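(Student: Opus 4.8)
The goal is to show that $\Img(\psi)$ contains every element $T_{r,q}(g)$ with $g\in\enalg$ and $r,q\ge 0$; since these generate $\mathcal D_{t,k}(r)$ (the generating-set argument of Section \ref{gensetsect} together with the basis $\{T(\bold{m})\}$), this gives surjectivity. As in the proof of Proposition \ref{psicons} I would carry out all computations under the correspondence $X\sim X_n$, i.e.\ inside the finite-rank spherical algebras $B_{t_n,k_n}(\nu_n,r)$, where the commutation relations are explicit and $T_{0,0}(\Id)\sim\nu_n$ is a scalar; each identity produced is uniform in $n$, so it passes to the restricted ultraproduct. Writing $\enalg=\mathfrak{sl}_r\oplus\mathbb C\cdot\Id$, the plan splits into: (i) build all $T_{r,q}(z)$, $z\in\mathfrak{sl}_r$, from the four families $\psi(z),\psi(K(z)),\psi(Q(z)),\psi(P(z))=T_{0,0}(z),T_{1,0}(z),T_{0,1}(z),T_{1,1}(z)$ by iterated commutators; (ii) produce the trace part $T_{0,0}(\Id)$ ($=K$ in $\mathcal D_{t,k}(r)$); (iii) produce the remaining trace parts $T_{r,q}(\Id)$, $r+q\ge 1$.

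For (i), the basic observation is that, because the $x_i$ commute, one has $[T_{1,0}(z_1),T_{r-1,0}(z_2)]=T_{r,0}([z_1,z_2])$ \emph{exactly}, and likewise along the $y$-axis; since $\mathfrak{sl}_r$ is perfect this yields $T_{r,0}(\mathfrak{sl}_r)$ and $T_{0,q}(\mathfrak{sl}_r)$ for all $r,q$ by induction. For mixed $(r,q)$ one then uses brackets like $[T_{0,1}(z_1),T_{r,q-1}(z_2)]=T_{r,q}([z_1,z_2])+(\text{lower total degree})$, where the correction terms are $\mathbb C$-combinations of products of $T_{r',q'}(g)$ with $g\in\enalg$ of strictly smaller total degree; an induction on total degree then closes the argument, \emph{provided} the lower-degree trace parts have already been produced — which is exactly the role of (ii)--(iii).

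The heart of the matter is (ii)--(iii), and this is where the hypothesis enters. Consider $D:=[\psi(K(E_{12})),\psi(Q(E_{21}))]-\psi(P([E_{12},E_{21}]))\in\Img(\psi)$. Expanding the commutator exactly as in Proposition \ref{psicons}, using $[x_i,y_i]\be=(t-k\sum_{m\ne i}\sigma_{im})\be$, the identities $(E_{\alpha\alpha})_i\sigma_{im}=\sum_\nu(E_{\alpha\nu})_i(E_{\nu\alpha})_m$ and $(E_{12})_i(E_{21})_j\sigma_{ij}=(E_{11})_i(E_{22})_j$, and then splitting each $E_{aa}=(E_{aa}-\tfrac1r\Id)+\tfrac1r\Id$, one finds that $D$ equals $-\tfrac{t+rk}{r}\,K$ plus a $\mathbb C$-combination of elements $T_{0,0}(\mathfrak{sl}_r)$ and products $T_{0,0}(E_{\alpha\beta})T_{0,0}(E_{\gamma\delta})$ — all of which lie in $\Img(\psi)$ — the crucial point being that the $K^2$ and $K\cdot T_{0,0}(\mathfrak{sl}_r)$ contributions coming from the $\sigma$-terms cancel exactly. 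Hence $K\in\Img(\psi)$ as soon as $t+rk\ne 0$. For (iii) the same mechanism is applied to commutators of already-constructed elements (schematically $[T_{1,0}(E_{12}),T_{a,b}(E_{21})]=T_{a+1,b}(E_{11}-E_{22})-b(t+rk)T_{a,b-1}(E_{22})+\cdots$): a single $[x_i,y_i]$-contraction produces, modulo $\Img(\psi)$, a nonzero scalar multiple of $(t+rk)\,T_{r,q}(E_{aa})$, and therefore of $T_{r,q}(\Id)$; the extra $rk$ comes precisely from the $\sum_{m\ne i}\sigma_{im}$ contractions, since $E_{a\nu}E_{\nu a}=E_{aa}$ is $\nu$-independent and the sum over $\nu$ contributes the factor $r$.

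I expect the main obstacle to be the combined induction. One must order the construction so that at each stage the $\mathfrak{sl}_r$-parts (and $K$) are in hand before they are needed in the trace-part step, and, more delicately, control the pieces of total degree \emph{above} the degree of the element being extracted that inevitably appear in such commutators — by comparing them with the symmetrized monomials $T(\bold{m})$, which belong to $\Img(\psi)$ — so that no $T_{r',q'}(\Id)$ of unavailable degree re-enters a correction. The second essential point is the bookkeeping of the $\sigma$-term contributions: one has to verify that the coefficient of the element being solved for is a nonzero scalar multiple of $t+rk$ and not of $t$ alone, so that the hypothesis $t+rk\ne 0$ is exactly what is required.
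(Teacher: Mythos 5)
You follow essentially the same route as the paper: reduce surjectivity to showing that every $T_{r,q}(g)$, $g\in\enalg$, lies in $\Img(\psi)$; obtain the $\mathfrak{sl}_r$-parts from commutators with the degree-$\le 1$ generators (your exact identities along the pure-$x$ and pure-$y$ directions are correct, and perfectness of $\mathfrak{sl}_r$ enters the same way); and extract the trace parts, beginning with $K$, from commutators in which the Cherednik relation between $x_i$ and $y_i$ produces the factor $t+rk$ through the $\sigma$-contractions. Your step (ii) checks out: for $D=[\psi(K(E_{12})),\psi(Q(E_{21}))]-\psi(P(E_{11}-E_{22}))$ the $K^2$ and $K\cdot T_{0,0}(\mathfrak{sl}_r)$ contributions do cancel and $D\equiv-\tfrac{t+rk}{r}K$ modulo $\Img(\psi)$. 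Note that this step works because the would-be leading term $\psi(P(E_{11}-E_{22}))=T_{1,1}(E_{11}-E_{22})$ is in the image by the very definition of $\psi$.

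The gap is in step (iii). Your bracket $[T_{1,0}(E_{12}),T_{a,b}(E_{21})]$ has leading term $T_{a+1,b}(E_{11}-E_{22})$, of vertical degree $a+b+1=N+2$ where $N=a+b-1$ is the degree of the trace part $T_{a,b-1}(E_{22})$ you want to extract; to isolate that trace part modulo $\Img(\psi)$ you must already know that this mixed $\mathfrak{sl}_r$-part of degree $N+2$ lies in the image. But in your step (i) the mixed $\mathfrak{sl}_r$-parts of degree $M$ are only produced modulo correction terms of degree $M-2$, and these corrections genuinely involve trace parts (single sums carrying matrices such as $z_2z_1$ of nonzero trace), so certifying them requires $F_v^{M-2}\subset\Img(\psi)$, i.e.\ trace parts of degree $M-2=N$ --- exactly the elements being produced. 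The induction as you have ordered it is therefore circular, and the remedy you sketch, comparing with ``the symmetrized monomials $T(\mathbf m)$, which belong to $\Img(\psi)$'', begs the question: that the $T(\mathbf m)$ lie in the image is the statement being proved. The paper resolves this by choosing the trace-extraction bracket with \emph{commuting} matrix parts, namely $[T_{1,0}(H),T_{r,q+1}(H)]$ with $H=E_{11}-E_{22}$: since $[H,H]=0$, no term of degree $N+2$ appears at all, only $\mathfrak{sl}_r$-parts of degree $\le N+1$ (available at that stage) are needed, and the computation gives $-2(q+1)(t+rk)\,T_{r,q}(1)$ modulo $\Img(\psi)$. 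Within your scheme the same effect could be achieved by symmetrizing, e.g.\ adding the companion bracket $[T_{1,0}(E_{21}),T_{a,b}(E_{12})]$ so that the two unknown leading terms $T_{a+1,b}(\pm(E_{11}-E_{22}))$ cancel; but some such device is indispensable, and as written step (iii) does not go through.
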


\begin{proof}
Since $T(\mathbf{m})$ form a basis of $\mathcal D_{t,k}(r)$ and they themselves are given by the linear combinations of the products of $T_{r,q}(z)$ for all $z \in \enalg$ it follows that it is enough to prove that $T_{r,q}(z)$ lie in the image of $\psi$. More precisely to prove that $\psi$ surjects onto $F_v^{N}\mathcal D_{t,k}(r)$ it is enough to prove that all $T_{r,q}(z)$ for $r+q \le N$ are in the image of $\psi$. 

We would like to prove the last statement by inducting on $N$. But our induction will be slightly more involved than one could hope for.

Nevertheless we would like to start with proving the base case. Namely that all $T_{0,0}(z)$ are in the image of $\psi$. Indeed we know that for all $z \in \mathfrak{sl}_r$ $\psi(z) = T_{0,0}(z)$, so we only need to show that $K = T_{0,0}(\rm{Id})$ is in the image. Denote $H=E_{11}-E_{22}$ and consider $[\psi(K(H)),\psi(Q(H))]$:
$$
[\psi(K(H)),\psi(Q(H))] \sim  \sum_{i,j}(H)_i (H)_j [x_i,y_j] \ .
$$

Now we will calculate this modulo the image of $\psi$ (we will denote this by $\underset{\psi}{\sim}$). So after we apply the same operations to the last term as in Proposition \ref{psicons} and then note that $H^2 = E_{11}+E_{22}$, we have:
$$
[\psi(K(H)),\psi(Q(H))] \sim -k_n \sum_{i\ne j}(H)_i(H)_j\sigma_{ij} - t_n\sum_i (E_{11}+E_{22})_i + k_n\sum_{i\ne j}(E_{11}+E_{22})_i \sigma_{ij} \ .
$$
This time we will calculate the last terms by inserting the identity $1 = (E_{11} + \dots E_{rr})_k$. We get:
$$
\sum_{i\ne j}(E_{11}+E_{22})_i \sigma_{ij} = \sum_{i\ne j}(E_{11})_i(E_{11})_j + \sum_{i\ne j}(E_{22})_i(E_{22})_j + \sum_{\alpha \ne 1}\sum_{i\ne j}(E_{1\alpha})_i(E_{\alpha 1})_j + \sum_{\alpha \ne 2}\sum_{i\ne j}(E_{2\alpha})_i(E_{\alpha 2})_j \ .
$$
Putting this into original formula we get:
$$
[\psi(K(H)),\psi(Q(H))] =
$$
$$
 = -k_n\sum_{i\ne j}\left((E_{11})_i (E_{11})_j + (E_{22})_i(E_{22})_j -(E_{12})_i(E_{21})_j - (E_{21})_i(E_{12})_j\right) - t_n\sum_i (E_{11}+E_{22})_i -
 $$
 $$
 +k_n\sum_{i\ne j}\left((E_{11})_i(E_{11})_j +(E_{22})_i(E_{22})_j\right) + k_n\sum_{\alpha \ne 1}\sum_{i\ne j}(E_{1\alpha})_i(E_{\alpha 1})_j+k_n\sum_{\alpha \ne 2}\sum_{i\ne j}(E_{2\alpha})_i(E_{\alpha 2})_j \ .
$$

Now notice that $\psi(z_1)\psi(z_2) = \sum_{i\ne j}(z_1)_i(z_2)_j + \sum_i (z_1\cdot z_2)_i$, so it follows that:
$$
[\psi(K(H)),\psi(Q(H))] \underset{\psi}{\sim} -(t_n +rk_n)\sum_{i}(E_{11}+E_{22})_i= -(t_n +rk_n)\sum_{i}(2 + z)_i  \underset{\psi}{\sim} -2(t+rk)K \ ,
$$
for some $z \in \mathfrak{sl}_r$. Hence we know that $K$ is in the image of $\psi$.

Now we will prove the surjectivity in general by induction. For each $m$ we will be proving that all $T_{r,q}(z)$ with $r+q \le m+1$ and $z \in \mathfrak{sl}_r$ are in the image of $\psi$ and that all $T_{r,q}(1)$ with $r+q \le m$ are in the image of $\psi$ (so that $\psi$ surjects onto $F^{m}_v\mathcal D_{t,k}(r)$). From this statement it will follow that $\psi$ is surjective. 

Now the base for $m=0$ holds since we have just proved that $T_{0,0}(1) = K$ is in the image of $\psi$ and also we know that $T_{0,1}(z)$ and $T_{1,0}(z)$ are in the image. 

So we are ready to prove the induction step. Suppose the statement holds for $m$ and we want to prove it for $m+1$. We need to prove that all $T_{r,q}(z)$ with $z \in \mathfrak{sl}_r$ and $r+q=m+2$ lie in the image of $\psi$ and also that all $T_{r,q}(1)$ for $r+q=m+1$ lie there.

We will start with the first statement. Note that we already know that $T_{m+2,0}(z)$ and $T_{0,m+2}(z)$ are in the image, since $\psi(z)$ and $\psi(K(z))$ generate $U(\mathfrak{sl}_r[x]) \subset \mathcal D_{t,k}(r)$ and the analogous statement holds for $\psi(z)$ and $\psi(Q(z))$. It is enough to prove that, for example $T_{m+2-k,k}(E_{13})$ is in the image for each $k$ from $1$ to $m+1$, since then by taking commutators with $T_{0,0}(z)$ we can obtain any other $T_{m+2-k,k}(z')$. Let's calculate the commutator of $T_{m+2-k,k-1}(E_{12})$ and $T_{0,1}(H)$, both of which are in the image. To do that, we will denote by $f_{r,q}(i)$ the polynomial in $x_i$ and $y_i$ which appears in $T_{r,q}(E_{12}) \sim \sum_i (E_{12})_i f_{r,q}(i)$. We have:
\scriptsize
$$
[T_{m+2-k,k-1}(E_{12}), T_{0,1}(H)] \sim
$$
$$
\sim -2\sum_i (E_{12})_i \frac{f_{m+2-k,k-1}(i)y_i + y_if_{m+2-k,k-1}(i)}{2} + \sum_{i,j}\frac{(E_{12})_i(H)_j + (H)_j(E_{12})_i}{2}[f_{m+2-k,k-1}(i),y_j] \ .
$$
\normalsize
Now since we know that $\psi$ surjects onto $F^m_v\mathcal D_{t,k}(r)$ we would like to calculate the above commutator modulo degree $m$. The last term is zero modulo degree $m$ since it contains at least one commutator of $x$ and $y$ which decreases the degree by $2$. Now also modulo degree $m$ the monomials in the first term commute. So, we have:
$$
[T_{m+2-k,k-1}(E_{12}), T_{0,1}(E_{23})] \underset{\psi}{\sim} -2\sum_i (E_{12})_i x^{m+2-k}_iy_i^k \underset{\psi}{\sim} -2T_{m+2-k,k}(E_{13}) \ .
$$

Now we only need to prove that $T_{r,q}(1)$ for $r+q = m+1$ are in the image. To do that let us calculate the commutator of $T_{r,q+1}(H)$ and $T_{1,0}(H)$. We have:
$$
[T_{1,0}(H),T_{r,q+1}(H)] \underset{\psi}{\sim} \sum_{i,j}(H)_i(H)_j [x_i, f_{r,q+1}(j)] \ .
$$
We need to calculate this term modulo degree $m$. Hence we can commute the terms in $f_{r,q+1}(j)$ under the commutator. I.e. we have:
$$
[T_{1,0}(H),T_{r,q+1}(H)] \underset{\psi}{\sim} \sum_{i,j}(H)_i(H)_j [x_i, x_j^ry_j^{q+1}] \underset{\psi}{\sim} \sum_{i,j}(H)_i(H)_j x_j^r[x_i, y_j^{q+1}] \underset{\psi}{\sim} \ .
$$
$$ 
\underset{\psi}{\sim} \sum_{i,j}\sum_{l=0}^{q}(H)_i(H)_j x_j^ry_j^l[x_i,y_j]y_j^{q-l} \underset{\psi}{\sim} 
$$
$$
 \underset{\psi}{\sim} -k_n\sum_{i\ne j}\sum_{l=0}^q (H)_i(H)_j \sigma_{ij} x_j^ry_j^l y_i^{q-l} - t_n(q+1)\sum_{i}(H^2)_i x_i^ry_i^q  +k_n\sum_{i\ne j}\sum_{l=0}^q(H^2)_j \sigma_{ij} x_j^{r}y_j^ly_i^{q-l}  \underset{\psi}{\sim}
$$
$$
 \underset{\psi}{\sim} -k_n\sum_{l=0}^q\sum_{i\ne j}\left[ (E_{11})_i (E_{11})_j + (E_{22})_i(E_{22})_j - (E_{12})_j(E_{21})_i - (E_{21})_j(E_{12})_i\right]x_j^ry_j^ly_i^{q-l} -
 $$
 $$
 - t_n(q+1)\sum_i(H^2)_ix_i^ry_i^q +
 $$
 $$
 +k_n\sum_{l=0}^q\sum_{i\ne j}\left((E_{11})_i (E_{11})_j + (E_{22})_i(E_{22})_j + \sum_{\alpha \ne 1}(E_{1\alpha})_j(E_{\alpha1})_i + \sum_{\alpha\ne 2}(E_{2\alpha})_j(E_{\alpha2})_i\right)x_j^ry_j^ly_i^{q-l}\underset{\psi}{\sim}
$$
$$
\underset{\psi}{\sim} -t_n(q+1)\sum_i(H^2)_ix_i^ry_i^q +
$$
$$
+k_n\sum_{l=0}^q\sum_{i\ne j}\left((E_{12})_j (E_{21})_i + (E_{21})_j(E_{12})_j + \sum_{\alpha \ne 1}(E_{1\alpha})_j(E_{\alpha1})_i + \sum_{\alpha\ne 2}(E_{2\alpha})_j(E_{\alpha2})_i\right)x_j^ry_j^ly_i^{q-l} \ .
$$
Now note the following formula:
$$
\sum_{i\ne j}(z_1)_j(z_1)_ix_j^{r_1}y_j^{q_1}x_i^{r_2}y_i^{q_2} \sim 
$$
$$
\sim T_{r_1,q_1}(z_1)T_{r_2,q_2}(z_2) - \sum_i (z_1 \cdot z_2) x_i^{r_1+r_2}y_i^{q_1+q_2} \ \textrm{modulo} \ F^{r_1+q_1+r_2+q_2-1}_v\mathcal D_{t,k}(r) \ .
$$
In our case $r_1+q_1+r_2+q_2 = r+q =m +1$. Since we know that $F^{m}_v\mathcal D_{t,k}(r) \subset \Img (\psi)$, it follows that we can use this formula. Also notice that since everywhere there we can use the above formula $z_1,z_2 \in \mathfrak{sl}_r$ and $r_i+q_i < m+1$, it follows that $T_{r_i,q_i}(z_i) \in \Img(\psi)$. Thus it follows that:
$$
[T_{1,0}(H),T_{r,q+1}(H)] \underset{\psi}{\sim}
$$
$$
\underset{\psi}{\sim} -t_n(q+1)\sum_i(H^2)_ix_i^ry_i^q 
-k_n(q+1)\sum_{i}\left[(E_{11})_i  + (E_{22})_i + (r-1)(E_{11})_i+ (r-1)(E_{22})_i\right]x_i^ry_i^q \underset{\psi}{\sim}  \ .
$$
$$
\underset{\psi}{\sim} -(q+1)(t_n + rk_n)\sum_i (E_{11}+E_{22})_i x_i^ry_i^q \underset{\psi}{\sim} -2(q+1)(t_n + rk_n)\sum_i x_i^ry_i^q \underset{\psi}{\sim} -2(q+1)(t_n + rk_n)T_{r,q}(1) \ .
$$

So we have proven the inductive step and hence it follows that $\psi$ is surjective.
\end{proof}

\subsection{Injectivity of $\psi$}

In this subsection we are going to show that, if $t+rk \ne 0$, $\psi$ is injective and, hence, it is an isomorphism. In order to do that we will show that $\mathbb D_{\lambda,\beta}(r)$ has a faithful representation $\mathbb D_{\lambda,\beta}(r)\to \End(M)$, such  that $M$ is also a $\mathcal D_{t,k}(r)$-module and the action map for $\mathbb D_{\lambda,\beta}(r)$ factors through $\psi$.

Here, we will extensively use the results of \cite{Gu2}. First of all we need to define an alternative presentation of Guay's DDCA  -- $  D_{\lambda,\beta}(r)$, which will be isomorphic to $\mathbb D_{\lambda,\beta}(r)$. This presentation is quite involved and its exact form isn't important for us, so we will state an abbreviated version of it.
\begin{def0} [Definition 8.1 in \cite{Gu2}]
The algebra $  D_{\lambda,\beta}(r)$ is generated by elements $X^\pm_{i,0}, X^\pm_{i,1}, H_{i,0}, H_{i,1}$ for $i \in \{1,\dots, r-1\}$ and $X_{0,0}^+, X_{0,1}^{+,\pm}$, which satisfy a number of relations.

Also there are two specific elements in this algebra, denoted by $\omega_{0}^{+,\pm}$ (see Section 9 of \cite{Gu2}).
\end{def0}

Another result which is important to us is the explicit structure of the isomorphism between $ D_{\lambda,\beta}(r)$ and $\mathbb{D}_{\lambda,\beta}(r)$.
\begin{thm}[Theorem 15.1 in \cite{Gu2}]
Define a map $\zeta:{D}_{\lambda,\beta}(r) \to \mathbb D_{\lambda,\beta}(r)$ to be equal to:
$$
\zeta(X^{\pm}_{i,0})= E^{\pm}_i , \ \zeta(H_{i,0})= H_i , \ \zeta(X^{\pm}_{i,1})= Q(E^{\pm}_i) , \ \zeta(H_{i,1})= Q(H_i) ,
$$
$$
\zeta(X_{0,0}^{+}) = K(E_{- \theta}), \ X_{0,1}^{+,\pm} = P(E_{-\theta}) -\lambda\omega_0^{+,\pm} \ ,
$$
where $E^+_i = E_{i,i+1}$, $E^-_{i} = E_{i+1,i}$, $E_{\theta} = E_{1,r}$, $E_{-\theta} = E_{r,1}$ and $H_i = E_{i,i} - E_{i+1,i+1}$. This map is an isomorphism.
\end{thm}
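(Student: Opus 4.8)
This is Theorem~15.1 of \cite{Gu2}; below I outline the shape of the argument one would carry out. It has three parts: well-definedness of $\zeta$, surjectivity, and injectivity.

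First I would check that $\zeta$ is a well-defined algebra homomorphism, i.e. that the prescribed images in $\mathbb D_{\lambda,\beta}(r)$ of the generators $X^\pm_{i,0}, X^\pm_{i,1}, H_{i,0}, H_{i,1}, X_{0,0}^+, X_{0,1}^{+,\pm}$ satisfy every defining relation of $D_{\lambda,\beta}(r)$. This is the long computational step, and the relations organize into three families. The relations involving only $X^\pm_{i,0}, X^\pm_{i,1}, H_{i,0}, H_{i,1}$ say that these elements generate a copy of $U(\mathfrak{sl}_r[v])$; they hold because, by Definition~\ref{Guaydef}, the subalgebra of $\mathbb D_{\lambda,\beta}(r)$ generated by the $z$'s and the $Q(z)$'s is exactly $U(\mathfrak{sl}_r[v])$, with $E^\pm_i, H_i, Q(E^\pm_i), Q(H_i)$ among its Chevalley-type generators. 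The relations involving $X_{0,0}^+ = K(E_{-\theta})$ reduce, after expanding commutators, to the quadratic commutator relation of Definition~\ref{Guaydef} (specialized to the matrix units occurring in $E_{-\theta}$ and the simple-root vectors) together with identities internal to $U(\mathfrak{sl}_r)$. Finally the relations involving $X_{0,1}^{+,\pm} = P(E_{-\theta}) - \lambda\omega_0^{+,\pm}$ are checked by substituting the explicit expression for $\omega_0^{+,\pm}$ from Section~9 of \cite{Gu2} and using the linearity of $P$ together with $[y,P(z)] = P([y,z])$; one verifies each relation of this family in turn.

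Next I would prove surjectivity. Since $\mathbb D_{\lambda,\beta}(r)$ is generated by $z \in \mathfrak{sl}_r$, $K(z)$, $Q(z)$, $P(z)$, it suffices to exhibit all of these in the image of $\zeta$. The elements $E^\pm_i, H_i$ generate $\mathfrak{sl}_r$. The subalgebra $U(\mathfrak{sl}_r[v])$ generated by $\mathfrak{sl}_r$ and $Q(E^\pm_i), Q(H_i)$ contains $Q(z)$ for every $z$. The vector $E_{-\theta} = E_{r,1}$ generates $\mathfrak{sl}_r$ under the adjoint action, and that action is compatible with $K$ (again by Definition~\ref{Guaydef}), so $K(E_{-\theta})$ being in the image forces $K(z)$ to be in the image for all $z$. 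Finally $\omega_0^{+,\pm}$ lies in the subalgebra generated by the elements already obtained, hence $P(E_{-\theta})$ is in the image, and then the relation $[y,P(E_{-\theta})] = P([y,E_{-\theta}])$, applied with $y$ ranging over $\mathfrak{sl}_r$, produces $P(z)$ for all $z$.

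Finally, injectivity I would obtain from a filtration/PBW argument. Both $D_{\lambda,\beta}(r)$ and $\mathbb D_{\lambda,\beta}(r)$ carry compatible filtrations for which $\zeta$ is filtered, and \cite{Gu2} identifies the associated graded of each with the same explicitly described algebra of PBW type (an enveloping-type algebra built on $\mathfrak{sl}_r[u,v]$ together with the extra central element). A surjective filtered map that induces a surjection between associated gradeds of equal size is bijective; applying this to $\zeta$ finishes the proof. The main obstacle is the first step: the sheer bulk of relation-checking, and in particular pinning down the precise form of $\omega_0^{+,\pm}$ so that the $X_{0,1}^{+,\pm}$-relations close up; once the PBW theorems for both presentations (both established in \cite{Gu2}) are available, the injectivity step is essentially bookkeeping.
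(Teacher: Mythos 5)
This statement is not proved in the paper at all: it is quoted verbatim as Theorem~15.1 of \cite{Gu2}, and the author simply imports it (together with the presentation $D_{\lambda,\beta}(r)$, the elements $\omega_0^{+,\pm}$, and the PBW-type results) as external input for the injectivity argument of $\psi$. So there is no in-paper proof to compare yours against; the only meaningful comparison is with Guay's own argument, whose overall shape (well-definedness by relation-checking, then bijectivity via the PBW bases of both presentations) your outline does reflect.

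As a standalone proof, however, what you have written is a scaffold rather than an argument. Every substantive step is deferred: you never list the defining relations of $D_{\lambda,\beta}(r)$ (the paper deliberately abbreviates Definition~8.1 of \cite{Gu2}, saying its exact form ``isn't important for us''), you never write down $\omega_0^{+,\pm}$, and the relation-checking for the images of $X_{0,0}^+$ and $X_{0,1}^{+,\pm}$ --- which is precisely where the specific coefficients $\beta - \tfrac{\lambda}{2}$, $\tfrac{\lambda}{4}$ in Definition~\ref{Guaydef} and the correction term $-\lambda\omega_0^{+,\pm}$ must conspire --- is exactly the part you flag as ``the main obstacle'' and do not carry out. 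The surjectivity paragraph is fine (irreducibility of the adjoint representation of $\mathfrak{sl}_r$ plus $[z,K(w)]=K([z,w])$ and $[y,P(z)]=P([y,z])$ does recover all $K(z)$ and $P(z)$ from $K(E_{-\theta})$ and $P(E_{-\theta})$), and the injectivity-via-associated-graded scheme is the right one, but it silently assumes the PBW theorems for both presentations, which in \cite{Gu2} are themselves nontrivial theorems (and for the representation-theoretic version used later in this paper require $t+rk\ne 0$). Given that the statement is an external citation, deferring to \cite{Gu2} is legitimate, but you should be explicit that your text is a reading guide to Guay's proof rather than a proof.
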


Another set of results that Guay proved in \cite{Gu2} are concerned with constructing a family of  $D_{\lambda,\beta}(r)$-modules. 
\begin{prop} [Section 9 of \cite{Gu2}]
For any $l \in \mathbb Z_{\ge 0}$ the vector space \linebreak $\mathbf{V}_l = H_{-t,-k}(l,1) \otimes_{\mathbb C[S_l]} (\mathbb C^r)^{\otimes l}$ has a structure of $  D_{\lambda,\beta}(r)$-module given by the following formulas. For $m \in \mathbf{v} \in H_{-t,-k}(l,1) \otimes_{\mathbb C[S_l]} (\mathbb C^r)^l$ we have
$$
X^{\pm}_{i,r}(m \otimes \mathbf{v}) = \sum_{j=1}^{l} my_j^r \otimes (E^{\pm}_i)_{j}\mathbf{v} , \ H_{i,r}(m \otimes \mathbf{v}) = \sum_{j=1}^{l} my_j^r \otimes (H_i)_{j}\mathbf{v} , 
$$
$$
X_{0,0}^{+}(m \otimes \mathbf{v}) = \sum_{j=1}^{l} mx_j \otimes (E_{-\theta})_{j}\mathbf{v} ,
$$
$$
 X_{0,1}^{+, \pm}(m \otimes \mathbf{v}) = \sum_{j=1}^{l} m\frac{x_jy_j + y_jx_j}{2} \otimes (E_{-\theta})f_{j}\mathbf{v} - \lambda \omega_0^{+,\pm}(m \otimes \mathbf{v}) \ .
$$
\end{prop}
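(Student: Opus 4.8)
The statement is that of \cite[Section 9]{Gu2}, so the most economical route is simply to invoke that reference; here I indicate the structure of the verification and how it fits the present framework. Each operator in the list has the shape $\sum_{j=1}^{l}\bigl(\text{right multiplication by }p_j\bigr)\otimes(z)_j$, with $z\in\mathfrak{gl}_r$ and $p_j\in\{1,\,y_j,\,x_j,\,\tfrac{1}{2}(x_jy_j+y_jx_j)\}$ — possibly plus a correction $-\lambda\,\omega_0^{+,\pm}$ in the case of $X^{+,\pm}_{0,1}$ — where the right multiplication is taken inside $H_{-t,-k}(l,1)$. Such an operator commutes with the left $H_{-t,-k}(l,1)$-action, and since $\sigma\,y_j\,\sigma^{-1}=y_{\sigma(j)}$, $\sigma\,x_j\,\sigma^{-1}=x_{\sigma(j)}$ and $\sigma\,(z)_j\,\sigma^{-1}=(z)_{\sigma(j)}$ on $(\mathbb C^r)^{\otimes l}$ for $\sigma\in S_l$, the sum is $S_l$-equivariant; hence it descends to a well-defined operator on the coinvariants $\mathbf V_l$. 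So well-definedness is immediate.

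It then remains to check that these operators satisfy the defining relations of $D_{\lambda,\beta}(r)$, equivalently — transporting along the isomorphism $\zeta$ — the relations of $\mathbb D_{\lambda,\beta}(r)$ of Definition \ref{Guaydef}. That the degree-$0$ generators span a copy of $\mathfrak{sl}_r$, and that together with the degree-$1$ generators they close up to $U(\mathfrak{sl}_r[v])$, follows from the commutativity of the $y_j$ and from $[(z)_j,(w)_{j'}]=\delta_{jj'}([z,w])_j$ on $(\mathbb C^r)^{\otimes l}$; linearity of $P(z)$ and $[y,P(z)]=P([y,z])$ come from the same identity. The one substantial relation is the commutator $[K(E_{ab}),Q(E_{cd})]$: one expands it using the Cherednik commutator $[y_i,x_j]=\delta_{ij}(-t+k\sum_{m\ne i}s_{im})-(1-\delta_{ij})ks_{ij}$ together with $(E_{\alpha\beta})_i(E_{\gamma\delta})_j s_{ij}=(E_{\alpha\delta})_i(E_{\gamma\beta})_j$ and the completeness relation $\sum_\gamma(E_{\alpha\gamma})_i(E_{\gamma\delta})_m$, and reorganizes the result into the right-hand side of the relation in Definition \ref{Guaydef}. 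Up to the sign conventions on $(t,k)$ this is exactly the computation in the proof of Proposition \ref{psicons} (carried out there at rank $\nu_n$ inside $B_{t_n,k_n}(\nu_n,r)$), and the same coefficient chase forces $\lambda=k$ and $\beta=-\tfrac{t}{2}-\tfrac{k(r-2)}{4}$; the elements $\omega_0^{+,\pm}$ are precisely the lower-order corrections needed so that $\sum_j\bigl(\cdot\,\tfrac{1}{2}(x_jy_j+y_jx_j)\bigr)\otimes(E_{-\theta})_j$ has the correct leading term, mirroring the ``lower order terms'' tracked when rewriting products $T_{r_1,q_1}(z_1)T_{r_2,q_2}(z_2)$ in the basis. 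Alternatively, via Schur--Weyl duality and the reparametrization of \cite{etingof2020representations} one identifies $\mathbf V_l$ with a module over the finite-rank spherical extended Cherednik algebra on which these operators act as generators $T_{0,\epsilon,l}(z)$, $T_{1,0,l}(z)$, $T_{1,1,l}(z)$, and the relations reduce to the finite-rank analogue of Proposition \ref{psicons}, which is legitimate because the identities established there are verified literally inside $B_{t_n,k_n}(\nu_n,r)$, not only in the ultraproduct.

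The main obstacle is the relation carrying the ``double current'' deformation — the one involving $X^{+,\pm}_{0,1}$ (equivalently $P(E_{-\theta})$ together with the $\omega_0^{+,\pm}$): here the $s_{im}$-part of $[y_i,x_j]$ must be expanded via completeness and then reorganized so that every surviving term is again either of admissible ``$\sum_j(\cdot\,p_j)\otimes(z)_j$''-type or strictly lower order, and it is exactly this step that pins down the correct $\beta$ and for which Guay introduces the explicit (and lengthy) formula for $\omega_0^{+,\pm}$. In the present framework this is the finite-rank shadow of the computation already completed in the proof of Proposition \ref{psicons}; everything else is routine bookkeeping.
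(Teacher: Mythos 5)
Your proposal is correct and takes essentially the same route as the paper: the paper gives no proof of this statement at all, importing it verbatim from Section 9 of \cite{Gu2}, and invoking that reference is also your primary move. Your supplementary sketch (the $S_l$-equivariance argument for well-definedness on the coinvariants, and the relation check mirroring the computation of Proposition \ref{psicons} with the sign conventions explained in the paper's subsequent Remark, deferring the explicit $\omega_0^{+,\pm}$ to Guay) is consistent with how the paper handles these points.
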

\begin{rem}
Note that in our case the parameters of the Cherednik algebra has to be $-t,-k$ as opposed to Guay's $t,k$. This discrepancy arises from us using a different sign in one of the commutators which define the Cherednik algebra, and also because of the different signs in the formulas which connect $t,k$ and $\lambda,\beta$ in our paper. We will also see that these signs arise naturally because in the definition above we are using a right action on the Cherednik algebra side of the tensor product.
\end{rem}
Guay also proved a PBW property for his DDCA in \cite{Gu2}. As a by-product of his proof he arrived at the following result.
\begin{prop} \label{PBW}
For $\beta \ne \frac{r\lambda}{4} + \frac{\lambda}{2}$ (equivalently $t + rk \ne 0$) and for any $x \in   D_{\lambda,\beta}(r)$, there exists $l \in \mathbb Z_{> 0}$  such that the map $\rho_l:  D_{\lambda,\beta}(r) \to \End(\mathbf{V}_l)$ specified above sends $x$ to a non-zero operator, i.e. $\rho_l(x) \ne 0$.
\end{prop}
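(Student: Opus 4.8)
The plan is to reconstruct the portion of Guay's PBW argument for $D_{\lambda,\beta}(r)$ in \cite{Gu2} that yields this statement. First I would put on $D_{\lambda,\beta}(r)$ the natural ascending filtration in which $X^{\pm}_{i,0}, H_{i,0}$ have degree $0$ and $X^{\pm}_{i,1}, H_{i,1}, X^{+}_{0,0}, X^{+,\pm}_{0,1}$ have degree $1$, so that the higher current elements produced by iterated brackets pick up growing degree. Reading off the defining relations one checks that $\mathrm{gr}\, D_{\lambda,\beta}(r)$ is a quotient of the enveloping algebra of a polynomial current Lie algebra (morally $\mathfrak{sl}_r\otimes\mathbb{C}[u,v]$, enlarged by the classes of the special elements $\omega_0^{+,\pm}$), and in particular $\mathrm{gr}\, D_{\lambda,\beta}(r)$ is spanned by ordered monomials in a fixed family of current generators $z\,u^a v^b$, with $z$ running over a basis of $\mathfrak{sl}_r$. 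Dually I would filter each $\mathbf{V}_l = H_{-t,-k}(l,1)\otimes_{\mathbb{C}[S_l]}(\mathbb{C}^r)^{\otimes l}$ by the order filtration coming from $H_{-t,-k}(l,1)$, which makes $\rho_l$ a filtered map and makes $\mathrm{gr}\,\mathbf{V}_l$ a module over $\mathrm{gr}\, D_{\lambda,\beta}(r)$ on which a current generator $z\,u^a v^b$ acts by the diagonal operator $\sum_{j=1}^{l}(x_j^{\,a}y_j^{\,b})\otimes (z)_j$, i.e.\ as a tensor product of $l$ evaluation-type representations of the polynomial current algebra.

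Granting this, I would reduce the assertion to faithfulness of the family $\{\mathrm{gr}\,\mathbf{V}_l\}_{l\ge 1}$ over $\mathrm{gr}\, D_{\lambda,\beta}(r)$: for $0\neq x\in D_{\lambda,\beta}(r)$, let $\bar x\neq 0$ be its leading symbol; if some $\mathrm{gr}\,\mathbf{V}_l$ detects $\bar x$ then $\rho_l(x)\neq 0$ for the same $l$, so the statement is equivalent to $\bigcap_l\ker\rho_l = 0$ and follows from $\bigcap_l\ker(\mathrm{gr}\,\mathbf{V}_l)=0$ in $\mathrm{gr}\, D_{\lambda,\beta}(r)$. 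To establish the latter I would specialize: substituting $l$ distinct generic points of $\mathbb{A}^2$ for the pairs $(x_j,y_j)$ turns $\mathrm{gr}\,\mathbf{V}_l$ into an honest tensor product $\bigotimes_{j=1}^{l}(\mathbb{C}^r)_{\mathrm{ev}(p_j)}$ of evaluation modules of $\mathfrak{sl}_r\otimes\mathbb{C}[u,v]$, and then a standard separation-of-variables computation — expanding a product of the diagonal operators, isolating the terms that distribute the factors over distinct tensor slots, and applying a Vandermonde/genericity argument to the values of the $u^a v^b$ at the points $p_j$ — shows that once $l$ exceeds the number of factors, distinct ordered current monomials act by linearly independent operators on $\bigoplus_l \mathrm{gr}\,\mathbf{V}_l$. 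This simultaneously yields Guay's PBW theorem (the ordered monomials form a basis of $\mathrm{gr}\, D_{\lambda,\beta}(r)$) and, via the leading-symbol step, the desired conclusion.

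The hypothesis $\beta\neq \tfrac{r\lambda}{4}+\tfrac{\lambda}{2}$, equivalently $t+rk\neq 0$, enters exactly as in the proof of Proposition \ref{psisurj}: the computations that recover the "scalar" current elements inside $\mathbf{V}_l$ (the $\mathfrak{gl}_r$-direction beyond $\mathfrak{sl}_r$, and the $\omega_0^{+,\pm}$-part) come out with an overall factor of $t+rk$, so when $t+rk = 0$ those elements vanish on every $\mathbf{V}_l$ and the family ceases to be collectively faithful. I expect the main obstacle to be precisely the graded faithfulness step: one must correctly pin down the order-graded limit of the Schur--Weyl-type modules $\mathbf{V}_l$ (checking that no unexpected lower-order collapse occurs, and keeping track of the non-semisimple, deformed part coming from $\omega_0^{+,\pm}$), and then carry out the evaluation-and-genericity argument rigorously; the remaining ingredients — the filtration bookkeeping and the PBW straightening via the defining relations of $D_{\lambda,\beta}(r)$ — are routine and are already done in \cite{Gu2}.
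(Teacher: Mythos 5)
The paper does not actually prove this proposition; it is cited directly from Guay's work \cite{Gu2}, with the surrounding text explicitly saying that Guay proved the PBW property there and that this statement is a ``by-product of his proof.'' So there is no internal proof in the present paper for your reconstruction to be compared against.

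Taken on its own, your sketch identifies the right kind of strategy one expects in Guay's argument: filter $D_{\lambda,\beta}(r)$ so that $\mathrm{gr}\,D_{\lambda,\beta}(r)$ maps onto something like $U(\mathfrak{sl}_r[u,v])$, filter the Schur--Weyl modules $\mathbf{V}_l$ compatibly, and reduce $\bigcap_l \ker\rho_l = 0$ to a graded separation argument with evaluation-type modules. But as written it has gaps you yourself half-acknowledge, and at least one detail looks misdirected. First, the explanation of where $t+rk\neq 0$ enters does not transfer cleanly: the computation in the proof of Proposition~\ref{psisurj} that produces a factor $t+rk$ lives in $\mathcal D_{t,k}(r)$ and recovers the $\mathfrak{gl}_r$-scalar element $K = T_{0,0}(1)$, which has no counterpart inside $D_{\lambda,\beta}(r)$ or $\mathbb D_{\lambda,\beta}(r)$ --- those algebras are generated by $\mathfrak{sl}_r$-type data, and the map $\psi$ is onto $\mathcal D_{t,k}(r)$, not from it. So ``the $\mathfrak{gl}_r$-direction beyond $\mathfrak{sl}_r$'' is not a piece of $D_{\lambda,\beta}(r)$ that the $\mathbf{V}_l$ would need to see, and a correct account of the nondegeneracy hypothesis has to be an argument internal to Guay's presentation (e.g.\ exhibiting a nonzero element of $D_{\lambda,\beta}(r)$ that vanishes on all $\mathbf{V}_l$ exactly when $\beta = \tfrac{r\lambda}{4}+\tfrac{\lambda}{2}$). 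Second, the ``graded faithfulness'' step is left as a hope: passing to $\mathrm{gr}\,\mathbf{V}_l$, verifying there is no lower-order collapse, controlling the $S_l$-coinvariants (so that specialization to $l$ distinct points is actually available after averaging), and dealing with the $\omega_0^{+,\pm}$ corrections are exactly where the real work lies, and your sketch defers all of them. In short: plausible outline, not a proof, and the stated mechanism for the hypothesis $t+rk\neq 0$ is not right as phrased.
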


In other words it follows that $\bigoplus_{l > 0} \mathbf{V}_l$ gives us a faithful representation of $ D_{\lambda,\beta}(r)$. 

Now to prove that $\psi$ is injective we will construct a $\mathcal D_{t,k}(r)$-module structure on $\mathbf{V}_l$. In order to do that we first want to show that for any $l$ there is a surjective map from $\mathcal D_{t,k}(r)$ to $B_{t,k}(l,r)$.
\begin{prop}
There is a surjective map $\pi_l: \mathcal D_{t,k}(r) \to B_{t,k}(l,r)$ that sends
$$
T(\mathbf m) \mapsto T_l(\mathbf m)
$$
including $K \mapsto l$.
\end{prop}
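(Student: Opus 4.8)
The plan is to build $\pi_l$ on the $T(\mathbf m)$-basis and then check the two required properties. Since $\{T(\mathbf m)\}$, over all collections $\mathbf m$, is a basis of $\mathcal D_{t,k}(r)$ (the Corollary at the end of Section~\ref{sectcentr}), there is a unique $\mathbb C$-linear map $\pi_l\colon\mathcal D_{t,k}(r)\to B_{t,k}(l,r)$ with $\pi_l(T(\mathbf m))=T_l(\mathbf m)$ for every $\mathbf m$, where $T_l(\mathbf m)$ is the element $T_n(\mathbf m)$ of Section~\ref{gensetsect} with $n=l$ (defined over $\mathbb C$ for every $l$). By construction $\pi_l(1)=\pi_l(T(\mathbf 0))=1$, and since $K=T_{0,0}(1)$ while $T_{0,0,l}(1)=l$, we get $\pi_l(K)=l$. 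It remains to prove that $\pi_l$ is an algebra homomorphism; surjectivity is then automatic, because over $\mathbb C$ the generating-set Proposition of Section~\ref{gensetsect} shows that $B_{t,k}(l,r)$ is generated as an algebra by the $T_l(\mathbf m)$, all of which lie in $\Img(\pi_l)$.

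\textbf{Multiplicativity.} Here it suffices to verify $\pi_l(T(\mathbf m_1)T(\mathbf m_2))=T_l(\mathbf m_1)T_l(\mathbf m_2)$ for all $\mathbf m_1,\mathbf m_2$, since $\mathbb C$-bilinearity then propagates the identity to arbitrary elements and $\pi_l(1)=1$. By the Proposition of Section~\ref{sectcentr}, in $\widetilde{\mathcal D}^{\mathrm{ext}}_{t,k,\nu}(r)$ — hence in $\mathcal D_{t,k}(r)$ with $\nu$ reinterpreted as the central element $K$ — there is an expansion
$$
T(\mathbf m_1)\,T(\mathbf m_2)=\sum_{\mathbf m}p^{\mathbf m}_{\mathbf m_1,\mathbf m_2}(K)\,T(\mathbf m),\qquad p^{\mathbf m}_{\mathbf m_1,\mathbf m_2}\in\mathbb C[K],
$$
the sum running over $\mathbf m$ with $m_{0,0,1}=0$. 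The key observation is that these polynomials are exactly those manufactured by the proof of that Proposition, which is a computation carried out inside the finite-rank algebra $H_{t,k}(n,r)$ and is uniform in $n$: it uses only the defining relations of $H_{t,k}(n,r)$, the commutation of the $(\alpha_l)_i$ with the $x_j$ and $y_j$, and the collapse of a free summation index into a factor of $n$ in the ``admissible sum'' bookkeeping. Rerunning this computation inside $H_{t,k}(l,r)$ produces the same expansion with $n$ set equal to the integer $l$, i.e.
$$
T_l(\mathbf m_1)\,T_l(\mathbf m_2)=\sum_{\mathbf m}p^{\mathbf m}_{\mathbf m_1,\mathbf m_2}(l)\,T_l(\mathbf m),
$$
and the right-hand side is exactly $\pi_l$ applied term by term to the previous display. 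Hence $\pi_l$ is multiplicative on basis elements, so it is an algebra homomorphism; equivalently, since $\pi_l(K)=l$ it kills $K-l$ and factors as $\mathcal D_{t,k}(r)\twoheadrightarrow\mathcal D_{t,k}(r)/(K-l)\to B_{t,k}(l,r)$, the second map being in general not injective.

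\textbf{Main obstacle.} The one substantive point is the ``uniformity in $n$'' claim just used: one must make sure that the structure-constant polynomials extracted in Section~\ref{sectcentr} — where the ultraproduct effectively allowed $n$ to be taken large — really do control each individual finite algebra $B_{t,k}(l,r)$ upon setting $K=l$. I expect this to be the heart of the matter, and I would settle it by re-reading the proof of that Proposition and checking that the induction on width-plus-weight of admissible sums nowhere uses that $n$ is large, nor that $B_{t,k}(n,r)$ is a member of the ultraproduct family — it is a purely formal manipulation valid for every $n\ge1$. A mild caveat to record: for small $l$ the elements $\{T_l(\mathbf m)\}$ need not be linearly independent in $B_{t,k}(l,r)$, so the expansion of $T_l(\mathbf m_1)T_l(\mathbf m_2)$ above need not be the unique one; but only its validity, not its uniqueness, is needed.
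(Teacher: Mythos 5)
Your proposal is correct and follows essentially the same route as the paper: define $\pi_l$ linearly on the basis $\{T(\mathbf m)\}$, invoke the structure-constant Proposition of Section~\ref{sectcentr} (whose proof is a finite-rank computation valid for every $n$, hence for $n=l$ with the polynomials evaluated at $l$) to get multiplicativity, and conclude surjectivity from the generating-set result of Section~\ref{gensetsect}. The paper leaves the uniformity-in-$n$ point and the possible linear dependence of the $T_l(\mathbf m)$ implicit, so your explicit treatment of these is a sound elaboration rather than a deviation.
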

\begin{proof}
Since $T(\mathbf m)$ form a basis, these formulas define a vector space map from $\mathcal D_{t,k}(r)$ to $ B_{t,k}(l,r)$.
Now from Section \ref{sectcentr} we know that a product of $T_l(\mathbf m_1)$ and $T_l(\mathbf m_2)$ is a linear combination of $T_l(\mathbf m)$ with coefficients being polynomial in $l$. And the same statement holds for $T(\mathbf m)$ but we need to substitute $K$ for $l$ in these polynomials. Hence this map is a map of algebras. It is surjective since $T_l(\mathbf m)$ form a generating set of $B_{t,k}(l,r)$.

\end{proof}

Now let us first construct a representation of $H_{t,k}(l,r)$ on $H_{-t,-k}(l,1) \otimes (\mathbb C^r)^l$. To do this we use the same ideas as in Proposition \ref{extcherstrep}.
\begin{prop}
For any $l \in \mathbb Z_{>0}$, there is a structure of representation of $H_{t,k}(l,r)$ on $H_{-t,-k}(l,1) \otimes (\mathbb C^r)^l$ given by:
$$
x_i(m \otimes \mathbf{v}) = mx_i \otimes \mathbf{v} , \ y_i(m \otimes \mathbf{v}) = my_i \otimes \mathbf{v} , \ (g)_i(m \otimes \mathbf{v}) = m \otimes (g)_i\mathbf{v} ,
$$
$$
s_{ij}(m\otimes v) = ms_{ij}\otimes \sigma_{ij}\mathbf{v} \ .
$$
\end{prop}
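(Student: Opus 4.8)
The plan is to check directly that the four families of operators satisfy the defining relations of $H_{t,k}(l,r)$ listed in Definition \ref{extcherfin}, in the same spirit as the proof of Proposition \ref{extcherstrep}. Write $N = H_{-t,-k}(l,1)$ for the type $A_{l-1}$ rational Cherednik algebra with parameters $-t,-k$, and $M = N \otimes_{\mathbb{C}} (\mathbb{C}^r)^{\otimes l}$, the tensor product over $\mathbb{C}$, so that right multiplication on the $N$-factor is unambiguously defined. By construction $x_i$, $y_i$ act on $M$ by right multiplication by $x_i$, $y_i \in N$; the $(g)_i$ act only through $\End(\mathbb{C}^r)^{\otimes l}$ on the second factor; and $s_{ij}$ acts by right multiplication by $s_{ij}\in N$ combined with the transposition operator $\sigma_{ij}$ on the second factor. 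Since $H_{t,k}(l,r)$ is a quotient of the semidirect product $\mathbb{C}[S_l]\ltimes[\mathbb{C}\langle x_i,y_i\rangle\otimes\End(\mathbb{C}^r)^{\otimes l}]$, it suffices to check that these assignments are compatible with the $S_l$- and $\End(\mathbb{C}^r)^{\otimes l}$-relations and with the semidirect-product relations, and then that the ideal relations $[x_i,x_j]=0$, $[y_i,y_j]=0$ and the mixed $[y_i,x_j]$-relation hold.

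The key observation — the analogue of the identity $s_{ij}^x = s_{ij}\sigma_{ij}$ used for the polynomial representation — is that $\sigma_{ij}^2 = \Id$ on $(\mathbb{C}^r)^{\otimes l}$, so the composite operator $s_{ij}\sigma_{ij}$ on $M$ is simply right multiplication by $s_{ij}\in N$, leaving the second factor untouched; more generally the $\sigma_{ij}$ realise on $(\mathbb{C}^r)^{\otimes l}$ the $S_l$-action used to form $N\otimes_{\mathbb{C}[S_l]}(\mathbb{C}^r)^{\otimes l}$ later. Granting this, the relations among the $(g)_i$ (including $[(g)_i,(h)_j] = \delta_{ij}([g,h])_i$ and $(1)_i = 1$) hold because these operators act through the algebra $\End(\mathbb{C}^r)^{\otimes l}$; the $S_l$-relations, the conjugation relations $s_{ij}\,x_k\,s_{ij} = x_{s_{ij}(k)}$ etc., and $[x_i,x_j] = [y_i,y_j] = 0$ follow at once from the corresponding relations in $N$ (here one uses that $s_{ij}$ is an involution, which is why checking on the transposition generators is harmless despite right multiplication being order-reversing). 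The only sign subtlety that survives is already built into the choice of parameters $-t,-k$ for $N$.

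The crux is the mixed relation. Evaluating the operator commutator on $m\otimes\mathbf{v}$ gives $[y_i,x_j](m\otimes\mathbf{v}) = m\,(x_j y_i - y_i x_j)\otimes\mathbf{v} = m\,[x_j,y_i]_N\otimes\mathbf{v}$; substituting the defining relation of $N$ with parameters $-t,-k$ turns $[x_j,y_i]_N$ into $\delta_{ij}(t - k\sum_{m\ne i}s_{im}) + (1-\delta_{ij})k\,s_{ij}$, and by the key observation the action of $s_{im}\sigma_{im}$ (resp. $s_{ij}\sigma_{ij}$) on $m\otimes\mathbf{v}$ is exactly $m\,s_{im}\otimes\mathbf{v}$ (resp. $m\,s_{ij}\otimes\mathbf{v}$), so this matches $\delta_{ij}(t - k\sum_{m\ne i}s_{im}\sigma_{im}) + (1-\delta_{ij})k\,s_{ij}\sigma_{ij}$ on the nose — and the parameters of $N$ are forced precisely so that the constant term comes out $+t$ rather than $-t$. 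I do not expect a genuine obstacle here: the work is essentially sign bookkeeping — one reversal coming from the use of a right action, compounded later by Guay's convention of $t,k$ against our $-t,-k$ — together with confirming that the $\sigma_{ij}$'s on the $(\mathbb{C}^r)^{\otimes l}$-factor cancel in exactly the places needed to reproduce the extended Cherednik relation. Once all relations are verified, the description of $H_{t,k}(l,r)$ as the quotient of the semidirect product by these relations yields the desired homomorphism $H_{t,k}(l,r) \to \End(M)$.
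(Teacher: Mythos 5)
Your proposal is correct and follows essentially the same route as the paper: a direct check of the defining relations of $H_{t,k}(l,r)$, with the order reversal coming from the right action on $H_{-t,-k}(l,1)$ explaining the sign flip in the parameters, and the identity $\sigma_{ij}^2=\Id$ identifying the operator $s_{ij}\sigma_{ij}$ with right multiplication by $s_{ij}$ alone. The paper's proof just records the mixed commutator computation and leaves the remaining relations to the reader, so your write-up is simply a more detailed version of the same argument.
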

\begin{proof}
We just need to check that these  formulas define a representation. This is easy to do. Indeed for example:
$$
[y_i,x_i](m\otimes \mathbf{v}) = m[x_i,y_i]\otimes \mathbf{v} = m(t -k\sum_{j\ne i}s_{ij})\otimes \mathbf {v} = 
$$
$$
t m\otimes \mathbf{v} - k\sum_{j \ne i}m s_{ij} \otimes \sigma_{ij}^2\mathbf{v} = (t - k\sum_{j \ne i}s_{ij}\sigma_{ij})(m \otimes \mathbf{v}) \ .
$$
There we can see that the opposite signs for $t$ and $k$ come from the use of the right action. The other commutators can be checked in the similar fashion.
\end{proof}
Note that we can derive the following Corollary from this result:
\begin{cor}
For any $l \in \mathbb Z_{>0}$, there is a structure of a representation of $B_{t,k}(l,r)$ on $\mathbf {V}_l = H_{-t,-k}(l,1) \otimes_{\mathbb C[S_l]} (\mathbb C^r)^l$ obtained by restriction of the representation of $H_{t,k}(l,r)$ on $H_{-t,-k}(l,1) \otimes (\mathbb C^r)^l$.
\end{cor}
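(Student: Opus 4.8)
The plan is to obtain this from the general principle that an idempotent cuts out a module category. Recall that if $e$ is an idempotent in an associative algebra $A$ and $M$ is a left $A$-module, then $eM$ is naturally a left $eAe$-module via $(eae)\cdot(em):=eaem$. I would invoke this with $A=H_{t,k}(l,r)$, with the idempotent $e=\mathbf{e}=\frac1{l!}\sum_{s\in S_l}s$, and with $M=H_{-t,-k}(l,1)\otimes(\mathbb C^r)^{\otimes l}$ carrying the $H_{t,k}(l,r)$-action of the preceding Proposition. Since $B_{t,k}(l,r)=\mathbf{e}H_{t,k}(l,r)\mathbf{e}$ by definition, this at once endows $\mathbf{e}M$ with a $B_{t,k}(l,r)$-module structure, and, as the action of $\mathbf{e}H_{t,k}(l,r)\mathbf{e}$ only refers to the $H_{t,k}(l,r)$-action on $M$, it is by construction a restriction of that action.

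It then remains to identify the subspace $\mathbf{e}M$ with the space $\mathbf{V}_l=H_{-t,-k}(l,1)\otimes_{\mathbb C[S_l]}(\mathbb C^r)^{\otimes l}$ of the statement. In $M$ the subgroup $S_l\subset H_{t,k}(l,r)$ acts by $s\cdot(m\otimes v)=ms\otimes\sigma_s v$, where $m\mapsto ms$ is the right regular action on $H_{-t,-k}(l,1)$ and $\sigma_s$ permutes the tensor factors of $(\mathbb C^r)^{\otimes l}$; hence $\mathbf{e}$ acts as the averaging projector $m\otimes v\mapsto\frac1{l!}\sum_{s\in S_l}ms\otimes\sigma_s v$ onto the subspace of ``diagonal'' $S_l$-invariants. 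Because we work in characteristic $0$, the canonical quotient map $H_{-t,-k}(l,1)\otimes(\mathbb C^r)^{\otimes l}\twoheadrightarrow H_{-t,-k}(l,1)\otimes_{\mathbb C[S_l]}(\mathbb C^r)^{\otimes l}$ restricts to an isomorphism $\mathbf{e}M\xrightarrow{\ \sim\ }\mathbf{V}_l$, and transporting the $B_{t,k}(l,r)$-module structure from $\mathbf{e}M$ along this isomorphism gives exactly the structure asserted in the Corollary, visibly obtained by restriction from $H_{t,k}(l,r)$.

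The one place requiring care is matching conventions: one must be sure that the balanced tensor product $\otimes_{\mathbb C[S_l]}$ is taken with respect to the same right $S_l$-action on $H_{-t,-k}(l,1)$ that sits inside $H_{t,k}(l,r)$ — the same bookkeeping, responsible for the sign change $(t,k)\to(-t,-k)$, already flagged in the Remark following the construction of the $D_{\lambda,\beta}(r)$-action above. Since transpositions are involutions and generate $S_l$, this identification is transparent, and I do not expect any real obstacle: the Corollary is a formal consequence of the preceding Proposition together with the identity $B_{t,k}(l,r)=\mathbf{e}H_{t,k}(l,r)\mathbf{e}$.
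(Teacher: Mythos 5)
Your proposal is correct and follows essentially the same route as the paper: the identity $B_{t,k}(l,r)=\mathbf{e}H_{t,k}(l,r)\mathbf{e}$ together with the fact that $S_l$ acts on the right of $H_{-t,-k}(l,1)$ and on the left of $(\mathbb C^r)^{\otimes l}$, so that the averaging idempotent $\mathbf{e}$ cuts out exactly $\mathbf{V}_l$. You are merely a bit more explicit than the paper in identifying $\mathbf{e}M$ (the invariants) with the balanced tensor product (the coinvariants) via the characteristic-zero isomorphism, which is a welcome clarification rather than a divergence.
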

 We will denote the corresponding map by $\tau_l:B_{t,k}(l,r) \to  \End(\mathbf{V}_l)$.
\begin{proof}
Indeed this follows from the fact that $B_{t,k}(l,r) = \bold{e}H_{t,k}(l,r)\bold{e}$ and the fact that the action of $\mathbb C[S_l] \subset H_{t,k}(l,r)$ on $H_{-t,-k}(l,1) \otimes (\mathbb C^r)^l$ is right on $H_{-t,-k}(l,1)$ and left on $(\mathbb C^r)^l$. Hence the averaging operator $\bold{e}$ ensures that we stay within $\mathbf{V}_l$.
\end{proof}

It follows that we have the following diagram:

\vspace{0.1cm}
\begin{center}
\includegraphics[width=\textwidth/2]{comm.jpg}
\end{center}
\vspace{0.1cm}

We want to show that this diagram is commutative:
\begin{prop} \label{commut}
For any $l \in \mathbb Z_{> 0}$ it holds that $\rho_l = \tau_l \circ \pi_l \circ \psi \circ \zeta$.
\end{prop}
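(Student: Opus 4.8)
The plan is to exploit that everything in sight is an algebra map. Since $\zeta$ is an isomorphism (Theorem 15.1 of \cite{Gu2}), $\psi$ is an algebra homomorphism (Proposition \ref{psicons}), $\pi_l$ is an algebra homomorphism (the preceding Proposition) and $\tau_l$ is a representation, the composite $\Phi := \tau_l \circ \pi_l \circ \psi \circ \zeta$ is an algebra homomorphism $D_{\lambda,\beta}(r) \to \End(\mathbf V_l)$; so is $\rho_l$. Hence it suffices to check that $\Phi$ and $\rho_l$ agree on the generating set $X^{\pm}_{i,0}, X^{\pm}_{i,1}, H_{i,0}, H_{i,1}$ (for $i = 1,\dots,r-1$), $X^+_{0,0}$ and $X^{+,\pm}_{0,1}$ of $D_{\lambda,\beta}(r)$.

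For all of these generators except $X^{+,\pm}_{0,1}$ this is a direct computation obtained by tracing a generator through the four maps. For instance $\zeta(X^{\pm}_{i,1}) = Q(E^{\pm}_i)$, then $\psi(Q(E^{\pm}_i)) = T_{0,1}(E^{\pm}_i)$, then $\pi_l$ sends this to $T_{0,1,l}(E^{\pm}_i) \sim \sum_{j=1}^{l}(E^{\pm}_i)_j y_j$ in $B_{t,k}(l,r)$; and since in the representation of $H_{t,k}(l,r)$ on $H_{-t,-k}(l,1)\otimes(\mathbb C^r)^{\otimes l}$ the operator $y_j$ acts by right multiplication on the first tensor factor while $(g)_j$ acts on the second, $\tau_l$ turns it into the operator $m\otimes\mathbf v \mapsto \sum_j m y_j\otimes(E^{\pm}_i)_j\mathbf v$, which is exactly $\rho_l(X^{\pm}_{i,1})$ from Section 9 of \cite{Gu2}. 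The remaining cases are of the same shape: $X^{\pm}_{i,0}$ and $H_{i,0}$ use $T_{0,0}$ instead of $T_{0,1}$ (hence the operator acts as $m\otimes\mathbf v\mapsto\sum_j m\otimes(\,\cdot\,)_j\mathbf v$), $H_{i,1}$ uses $H_i$ in place of $E^{\pm}_i$, and $X^+_{0,0}$ uses $\zeta(X^+_{0,0}) = K(E_{-\theta})$, $\psi(K(z)) = T_{1,0}(z)$ and $T_{1,0,l}(z)\sim\sum_j(z)_j x_j$, giving $m\otimes\mathbf v\mapsto\sum_j m x_j\otimes(E_{-\theta})_j\mathbf v$. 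In each case the left/right-action conventions on the two factors of $\mathbf V_l$ are precisely what matches the sign conventions relating $t,k$ to $\lambda,\beta$, consistently with the remarks preceding this Proposition.

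For $X^{+,\pm}_{0,1}$ I would use linearity together with the description $\zeta(X^{+,\pm}_{0,1}) = P(E_{-\theta}) - \lambda\,\omega_0^{+,\pm}$, where $\omega_0^{+,\pm}$ is a fixed word in the generators already handled. Since $\psi(P(z)) = T_{1,1}(z) \sim \sum_j (z)_j\tfrac{x_jy_j+y_jx_j}{2}$, running $P(E_{-\theta})$ through $\pi_l$ and $\tau_l$ produces the operator $m\otimes\mathbf v\mapsto\sum_j m\tfrac{x_jy_j+y_jx_j}{2}\otimes(E_{-\theta})_j\mathbf v$, i.e.\ the first term of $\rho_l(X^{+,\pm}_{0,1})$; and since $\Phi$ and $\rho_l$ were already shown to agree on every generator appearing in $\omega_0^{+,\pm}$, they agree on $\omega_0^{+,\pm}$ as well, which accounts for the remaining $-\lambda\,\omega_0^{+,\pm}$ term. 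Thus $\Phi(X^{+,\pm}_{0,1}) = \rho_l(X^{+,\pm}_{0,1})$, completing the check on generators and hence the proof.

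The routine part is the bookkeeping of the left/right actions and of the $t,k$ versus $\lambda,\beta$ sign conventions. The one genuinely delicate point is the treatment of $X^{+,\pm}_{0,1}$: it requires recalling the precise definition of $\omega_0^{+,\pm}$ from Section 9 of \cite{Gu2} and confirming that it is built only from the lower generators $X^{\pm}_{i,0}, X^{\pm}_{i,1}, H_{i,0}, H_{i,1}, X^+_{0,0}$, so that the reduction above is legitimate; alternatively, one verifies directly that $\pi_l(\psi(\zeta(\omega_0^{+,\pm})))$ is the element of $B_{t,k}(l,r)$ whose $\tau_l$-action is $\rho_l(\omega_0^{+,\pm})$.
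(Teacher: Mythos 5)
Your proposal is correct and follows essentially the same route as the paper: reduce to checking the composite against $\rho_l$ on the generators $X^{\pm}_{i,0}, X^{\pm}_{i,1}, H_{i,0}, H_{i,1}, X^+_{0,0}, X^{+,\pm}_{0,1}$, trace each through $\zeta$, $\psi$, $\pi_l$, $\tau_l$ to the corresponding $T_{r,q,l}$-operator on $\mathbf V_l$, and dispose of the $-\lambda\,\omega_0^{+,\pm}$ term by noting that $\omega_0^{+,\pm}$ is built from generators on which agreement is already established (the paper records that it lies in the span generated by $X^{\pm}_{i,0}$ and $H_{i,0}$).
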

\begin{proof}
It is enough to check this identity on the generators of $ D_{\lambda,\beta}(r)$. This is easy to do. We have
$$
(\pi_l \circ \psi \circ \zeta)(X_{0,r}^{\pm}) = (\pi_l \circ \psi)(E_i^{\pm}) = \pi_l (T_{0,0}(E_i^{pm}) = T_{0,0,l}(E_i^{pm})  ,
$$
and hence:
$$
(\tau_l \circ \pi_l \circ \psi \circ \zeta)(X_{0,r}^{\pm})(m \otimes \mathbf{v}) = \sum_{j} m \otimes (E_i^{\pm})_j\mathbf {v} = \rho_l(X_{0,r}^{\pm})(m \otimes \mathbf{v}) \ .
$$
And the same holds for $H_{i,0}$. 

Now $(\pi_l \circ \psi \circ \zeta)(X_{1,r}^{\pm}) = T_{0,1,l}(E_i^{\pm})$, hence
$$
(\tau_l \circ \pi_l \circ \psi \circ \zeta)(X_{1,r}^{\pm})(m \otimes \mathbf{v}) = \sum_{j} my_j \otimes (E_i^{\pm})_j\mathbf {v} = \rho_l(X_{1,r}^{\pm})(m \otimes \mathbf{v}) \ .
$$
And again the same holds for $H_{i,1}$.

For $X_{0,0}^+$ we have $(\pi_l \circ \psi \circ \zeta)(X_{0,0}^{+}) = T_{1,0,l}(E_{-\theta})$ and so:
$$
(\tau_l \circ \pi_l \circ \psi \circ \zeta)(X_{0,0}^{+})(m \otimes \mathbf{v}) = \sum_{j} mx_j \otimes (E_{-\theta})_j\mathbf {v} = \rho_l(X_{0,0}^{+})(m \otimes \mathbf{v}) \ .
$$
Lastly 
$$
(\pi_l \circ \psi \circ \zeta)(X_{0,1}^{+,\pm}) = T_{1,1,l}(E_{-\theta}) - \lambda(\pi_l \circ \psi \circ \zeta)\omega_0^{+, \pm} \ .
$$
Now since $\omega_0^{+,\pm}$ lies in the subspace generated by $X_{i,0}^{\pm}$ and $H_{i,0}$ it follows that 
$$(\tau_l \circ \pi_l \circ \psi \circ \zeta)(\omega_0^{+,\pm}) = \rho_l(\omega_0^{+,\pm})
$$
holds as proved by the previous formulas.
Hence we have:
$$
(\tau_l \circ \pi_l \circ \psi \circ \zeta)(X_{0,1}^{+,\pm })(m \otimes \mathbf{v}) = \sum_{j} m\frac{x_jy_j + y_jx_j}{2} \otimes (E_{-\theta})_j\mathbf {v} - \lambda \omega_0^{+,\pm}(m \otimes \mathbf{v}) = \rho_l(X_{0,1}^{+,\pm})(m \otimes \mathbf{v}) \ .
$$
And so the result follows.
\end{proof}

And so we can formulate the result which we wanted to prove in this section.
\begin{thm}
For $t+kr \ne 0$, the map $\psi: \mathbb D_{\lambda,\beta}(r) \to \mathcal D_{t,k}(r)$ constructed in Proposition \ref{psicons} is an isomorphism.
\end{thm}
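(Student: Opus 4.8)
The plan is to combine the surjectivity already established in Proposition \ref{psisurj} with an injectivity argument that is essentially formal, given the commutative diagram of Proposition \ref{commut}. For $t+rk\ne 0$ surjectivity of $\psi$ is exactly the content of Proposition \ref{psisurj}, so the only thing that remains is to check that $\ker\psi = 0$.

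For injectivity, suppose $a \in \mathbb D_{\lambda,\beta}(r)$ satisfies $\psi(a) = 0$. Since the map $\zeta$ of Theorem 15.1 of \cite{Gu2} is an isomorphism, there is a unique $b \in D_{\lambda,\beta}(r)$ with $\zeta(b) = a$. By Proposition \ref{commut}, for every $l \in \mathbb Z_{>0}$ we have $\rho_l(b) = (\tau_l \circ \pi_l \circ \psi \circ \zeta)(b) = \tau_l\bigl(\pi_l(\psi(a))\bigr) = 0$. But Proposition \ref{PBW} shows that, precisely under the hypothesis $t+rk\ne 0$ (equivalently $\beta \ne \tfrac{r\lambda}{4}+\tfrac{\lambda}{2}$), the family $\{\rho_l\}_{l>0}$ assembles into a faithful representation $\bigoplus_{l>0}\mathbf V_l$ of $D_{\lambda,\beta}(r)$; hence $\rho_l(b)=0$ for all $l$ forces $b = 0$, and therefore $a = \zeta(b) = 0$. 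Thus $\psi$ is injective, and combined with Proposition \ref{psisurj} it is a bijective algebra homomorphism, hence an isomorphism.

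I do not expect a genuine obstacle at this final stage: all the substantive work is hidden in the ingredients already proved, namely the explicit formulas and relation check of Proposition \ref{psicons}, the inductive surjectivity argument of Proposition \ref{psisurj} (where the condition $t+rk\ne 0$ enters crucially when one divides by the scalar $-2(q+1)(t+rk)$ to pull $T_{r,q}(1)$ and $K$ into the image), and the matching of the two module constructions on $\mathbf V_l$ in Proposition \ref{commut}. The one point worth stating carefully is that faithfulness in Proposition \ref{PBW} is phrased for the presentation $D_{\lambda,\beta}(r)$, which is why the argument must be routed through the isomorphism $\zeta$ rather than applied directly to $\mathbb D_{\lambda,\beta}(r)$; with that translation in place the theorem follows immediately.
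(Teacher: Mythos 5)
Your argument is correct and is essentially identical to the paper's proof: surjectivity is quoted from Proposition \ref{psisurj}, and injectivity is obtained by pulling an element of $\ker\psi$ back through $\zeta$ and using the commutativity $\rho_l = \tau_l \circ \pi_l \circ \psi \circ \zeta$ together with the joint faithfulness of the $\rho_l$ from Proposition \ref{PBW}. The paper merely phrases this contrapositively (a nonzero $x$ has $\psi(x)\ne 0$), which is the same reasoning.
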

\begin{proof}
We know surjectivity from Proposition \ref{psisurj}. Now take a non-zero $x \in \mathbb D_{\lambda,\beta}(r)$. Since $\zeta$ is an isomorphism there is $y \in  D_{\lambda,\beta}(r)$ such that $\zeta(y) = x$. Now by Proposition \ref{PBW} there exists $l$ such that $\rho_l(y) \ne 0$. Hence by Proposition \ref{commut} it follows that $(\tau_l \circ \pi_l \circ \psi \circ \zeta)(y) = (\tau_l \circ \pi_l)(\psi(x)) \ne 0$. Hence $\psi(x) \ne 0$. Thus $\psi$ is injective and so it is an isomorphism.
\end{proof}

\begin{rem}
We also expect that it is possible to construct a direct isomorphism with another presentation of DDC algebra introduced by Kevin Costello in \cite{costello2017holography}. The existence of this isomorphism is posed as a question in section 2.1 of the same paper.

There, Costello considers quantum Hamiltonian reduction of a certain Nakajima quiver variety. More explicitly, he considers the vector space 
$$
V_{N,K} = \mathfrak{gl}_K\oplus \Hom(\mathbb C^N, \mathbb C^K) \ ,
$$
with an action of both $\mathrm{GL}(K)$ and $\mathrm{GL}(N)$, and defines $\mathcal M_{N,K}$ to be the symplectic reduction with respect to the $\mathrm{GL}(K)$:
$$
\mathcal M^c_{N,K} = T^*V_{N,K}//\mathrm{GL(K)} \ ,
$$
where we have subtracted a $c$-multiple of the identity from the moment map.
So, the quantum Hamiltonian reduction is given by:
$$
\mathcal O_h(\mathcal M^c_{N,K}) =(\textrm{D}(V_{N,K})/I_c)^{\textrm{GL}(K)} \ ,
$$
where $I_c$ is the left ideal in the algebra of differential operators. This ideal is generated by the images of elements of $\mathfrak{gl}_K$ under the moment map (i.e. $\mu(x) -cTr(x)$ for $x \in \mathfrak{gl}_K$) and it becomes two-sided once we take the invariants.

Finally, to define the DDCA itself, Costello considers a certain limit of these algebras with $K$ going to infinity, to get 
$$
\mathcal O_h(\mathcal M^c_{N,\bullet}) \ .
$$

What we expect is that this algebra is isomorphic to $\mathcal D_{t,k}(r)$ with $r=N$ and $t$ and $k$ being certain functions of $c$ and $h$. This could be proven by making rigorous the following sketch of an argument. First, one can construct $\mathcal O_h(\mathcal M^c_{N,\bullet})$ directly in the Deligne category $\Rep(\textrm{GL}(\nu))$ by considering $\textrm{D}\left(\mathfrak{gl}_\nu \oplus \Hom(\mathbb C^N, \mathbb C^\nu)\right)$ (both $\mathfrak{gl}_\nu$ and $\mathbb C^\nu$ can be defined as objects of Deligne category) and then by taking the quotient by the generalization of $I_c$ and the invariants ($\Hom_{\Rep(\textrm{GL}(\nu))}(\Bbbk,\cdot)$, where $\Bbbk$ stands for the unit object of this category). On the other hand it can also be constructed as an ultraproduct in $n$, $\prodF \mathcal O_h(\mathcal M^c_{N,n})$. Costello does this implicitly in his paper by considering a certain class of admissible sequences of elements of these algebras. Now to finish the proof of the isomorphism with $\mathcal D_{t,k}(r)$ it would be enough to know that 
$$
\left(\textrm{D}\left(\mathfrak{gl}_\nu \oplus \Hom(\mathbb C^N, \mathbb C^\nu)\right)/I_c\right)^{\textrm{GL}(\nu)}
$$
is isomorphic to the spherical algebra of the extended Cherednik algebra. But this should follow through an ultraproduct argument from the following conjecture by P. Etingof that extends the deformed Harish-Chandra isomorphism to the case of the extended Cherednik algebra (see \cite{etingof2002symplectic} for the case of $r=1$).

\begin{conj} [P.Etingof]
For any $n,r \in \mathbb N$ and $t,k \in \mathbb C$ there is an isomorphism between the spherical sublagebra $B_{t,k}(n,r)$ of the extended Cherednik algebra and the quantization of the Hamiltonian reduction of $\mathcal M^c_{n,r}$, $\left(\textrm{D}\left(\mathfrak{gl}_n \oplus \Hom(\mathbb C^r, \mathbb C^n)\right)/I_c\right)^{\textrm{GL}(n)}$ for some values of $c,h\in \mathbb C$.
\end{conj}

\end{rem}

\end{document}